\theoremstyle{plain}
\newtheorem{thm}{Theorem}[section]
\newtheorem{cor}[thm]{Corollary}
\newtheorem{lemma}[thm]{Lemma}
\newtheorem{prop}[thm]{Proposition}
\theoremstyle{definition}
\newtheorem{defi}[thm]{Definition}
\newtheorem{rem}[thm]{Remark}
\newtheorem{ass}[thm]{Assumption}
\newtheorem{cons}[thm]{Consequence}
\numberwithin{equation}{section}
\DeclareMathOperator{\loc}{loc}
\DeclareMathOperator{\vp}{\varphi}
\DeclareMathOperator{\PP}{\mathbb{P}}
\DeclareMathOperator{\EE}{\mathbb{E}}
\DeclareMathOperator{\NN}{\mathbb{N}}
\DeclareMathOperator{\MM}{\mathcal{M}}
\DeclareMathOperator{\TT}{\mathbb{T}}
\DeclareMathOperator{\RR}{\mathbb{R}}
\DeclareMathOperator{\ZZ}{\mathbb{Z}}
\DeclareMathOperator{\Z}{\mathcal{Z}}
\DeclareMathOperator{\X}{\mathcal{X}}
\DeclareMathOperator{\XBM}{\X_{\text{BM}}}
\DeclareMathOperator{\Xc}{\X_{\text{cont}}}
\DeclareMathOperator{\XL}{\X_{\text{Lebesgue}}}
\DeclareMathOperator{\XS}{\X_{\text{Sobolev}}}
\DeclareMathOperator{\Xp}{\X_{\text{power}}}
\DeclareMathOperator{\Xind}{\X_{\text{ind}}}
\DeclareMathOperator{\Xii}{\X_{\text{ind}}^\infty}
\DeclareMathOperator{\XBMi}{\X_{\text{BM}}^\infty}
\DeclareMathOperator{\Xpi}{\X_{\text{power}}^\infty}
\title{Solutions to the stochastic thin-film equation for  initial values with non-full support}
\date{\today}
\author{Konstantinos Dareiotis\textsuperscript{1}, Benjamin Gess\textsuperscript{2}, Manuel V. Gnann\textsuperscript{3}, Max Sauerbrey\textsuperscript{4}}
		\thanks{\textsuperscript{1}University of Leeds, United Kingdom (\href{mailto:k.dareiotis@leeds.ac.uk}{k.dareiotis@leeds.ac.uk})}
	\thanks{\textsuperscript{2}Bielefeld University and MPI MiS Leipzig, Germany (\href{mailto:benjamin.gess@math.uni-bielefeld.de}{benjamin.gess@math.uni-bielefeld.de})}
	\thanks{\textsuperscript{3}Delft University of Technology, Netherlands (\href{mailto:m.v.gnann@tudelft.nl}{m.v.gnann@tudelft.nl})}
		\thanks{\textsuperscript{4}Delft University of Technology, Netherlands (\href{mailto:maxsauerbrey97@gmail.com}{maxsauerbrey97@gmail.com})}
\keywords{Thin-film equation, noise, $\alpha$-entropy estimates, stochastic compactness method.}
\subjclass[2020]{35R60, 76A20} 
\begin{document}
	\maketitle
	\begin{abstract}The stochastic thin-film equation with mobility exponent $n\in [\frac{8}{3},3)$ on the one-dimensional torus with multiplicative Stratonovich noise is considered. We show that martingale solutions exist for non-negative initial values. This advances on existing results in three aspects: (1) Non-quadratic mobility with not necessarily strictly positive initial data, (2) Measure-valued initial data, (3) Less spatial regularity of the noise. This is achieved by carrying out a compactness argument  based solely on the control of the $\alpha$-entropy dissipation and the conservation of mass. 
	\end{abstract}
	\section{Introduction}
	The stochastic thin-film equation
	\begin{equation}\label{Eq201}
	\partial_t u \,=\, -\,\partial_x(u^n\partial_x^3 u )\,+\, \partial_x (u^\frac{n}{2} W)
	\end{equation}
	with  noise $W$
	describes the evolution of the height $u(t,x)$ of a thin liquid film  driven by surface tension and thermal fluctuations. Equation \eqref{Eq201} was derived in \cite{DMES2005} and \cite{GruenMeckeRauscher2006} using the fluctuation dissipation relation and a lubrication approximation based on the Navier-Stokes equations with thermal noise, respectively.

	The construction of solutions to this equation based on the stochastic compactness method has been subject to recent research initiated by \cite{fischer_gruen_2018}. In \cite{fischer_gruen_2018} solutions to \eqref{Eq201} are constructed for the case of a quadratic mobility exponent $n=2$ and the additional consideration of interaction forces between the molecules of the liquid and the underlying substrate. The case of quadratic mobility is special, since then the noise term becomes linear in the film height $u$. Moreover, the  interaction forces expressed through an additional interface potential in \eqref{Eq201} leads to strictly positive film heights for all times. 
	Subsequently, in \cite{GessGann2020} solutions to \eqref{Eq201} with quadratic mobility and with noise interpreted in the Stratonovich sense were constructed, based on a time-splitting scheme. This construction allowed to treat also the case of initial data with non-full support. A related result was derived in \cite{KleinGruen22} by letting the interface potential in \cite{fischer_gruen_2018} tend to zero. Additionally, in  \cite{KleinGruen22} so called $\alpha$-entropy estimates for the solutions were derived, implying that the contact angle between  the substrate and the liquid is zero for almost all times. The case of non-quadratic mobility has only been treated in \cite{dareiotis2021nonnegative}, where solutions to \eqref{Eq201} are constructed for the range of mobility exponents $n\in [\frac{8}{3},4)$. The obtained solutions as well as the given initial condition were required to be positive almost everywhere, since the construction is based on the entropy estimate, which controls the smallness of $u$. We mention also the higher dimensional counterparts \cite{metzger2022existence} and \cite{Sauerbrey_2021} of \cite{fischer_gruen_2018} and \cite{GessGann2020}, respectively. Lastly, we refer to \cite{GGKO21} for numerical simulations of a spatially discretized version of \eqref{Eq201}. 
	
	The aim of this article is to complement the existing literature by constructing solutions to \eqref{Eq201} for  initial data without full support and non-quadratic mobility exponents. 
	
	The construction presented in this work applies for the range of mobility exponents $n\in [\frac{8}{3},3)$. The restriction $n\in (2,3)$ stems from the approach, while the additional requirement $n\ge \frac{8}{3}$ is solely required to ensure that suitable approximations are provided by \cite[Theorem 2.2]{dareiotis2021nonnegative}. This restriction may be alleviated in the future, since we expect  this requirement in \cite{dareiotis2021nonnegative} to be of technical nature.
	
	The applications of the stochastic compactness method in the mentioned articles rely on closing a-priori estimates, known for the deterministic thin-film equation 
	\begin{equation}\label{Eq203}
		\partial_t u \,=\, -\,\partial_x(u^n\partial_x^3 u ),
	\end{equation}
	for suitable approximations of \eqref{Eq201}. In the deterministic setting, these a-priori estimates are given by 
	the energy estimate
	\begin{equation}\label{Eq204}
		\frac{1}{2}\partial_t \int (\partial_x u)^2\, dx\,\le \, -\,\int (u^n \partial_x^3 u)^2\, dx,
	\end{equation}
	the entropy estimate
	\begin{equation}\label{Eq205}
		\partial_t \int G(u)\, dx\,\le\,-\,\int (\partial_x^2 u)^2\, dx,
	\end{equation}
	and the $\alpha$-entropy estimate
	\begin{equation}\label{Eq202}
		\partial_t \int G_\alpha(u)\, dx \,\lesssim_{\alpha,n,\theta}\, -\,\int u^{\alpha+n-2\theta+1}(\partial_x^2u^\theta)^2\, dx\,
		-\,\int u^{\alpha+n-3}(\partial_x u)^4 \,dx,
	\end{equation}
	where $G(u)=\int^u\int^t s^{-n}\,ds\, dt$ and more generally $G_\alpha(u)=\int^u\int^t s^{\alpha-1}\,ds\, dt$. We point out that $G_\alpha=G$ for $\alpha = 1-n$. Moreover, \eqref{Eq202}  holds, if
	\begin{equation}\label{Eq246}
	\tfrac{1}{3}(\alpha+n-2)(2\theta-1-(\alpha+n))-(\theta-1)^2\,>\,0.
	\end{equation}
	A parameter $\theta\in (0,\infty)$ subject to \eqref{Eq246} exists, if $\alpha\in (\frac{1}{2}-n,2-n)$. If $\alpha\in (1-n,2-n)$, the particular choice $\theta=1$ satisfies \eqref{Eq246}, while for the boundary cases $\alpha\in \{\frac{1}{2}-n, 2-n\}$ a version of \eqref{Eq202} applies too, see \cite[Proposition 2.1]{beretta1995nonnegative} for details. 
	
	In the stochastic setting, the time increments of these quantities consist next to the negative dissipation terms from the thin-film operator and the martingale part, also of possibly positive terms arising from the It\^o-correction of the multiplicative noise term. The proofs in \cite{fischer_gruen_2018,metzger2022existence} rely on balancing these additional terms in a combined energy-entropy functional. When using Stratonovich noise,  cancellations of these additional terms appear, which can be used to close the energy estimate independently, at least in the case of quadratic mobility $n=2$, as demonstrated in \cite{GessGann2020}. In the case of Stratonovich noise, also the $\alpha$-entropy estimates can be closed, which is the key in the proofs of \cite{KleinGruen22,Sauerbrey_2021}. Moreover, the use of Stratonovich calculus allows to close the entropy estimate also for non-quadratic mobilities, which was used in \cite{dareiotis2021nonnegative} to afterwards estimate the additional terms in the It\^o expansion of the energy functional. However, the use of the entropy estimate requires the solution to be positive almost everywhere, at least if $n\ge 2$. 
	
	The aim of this article is to provide solutions with possibly non-full support also in the case of a non-quadratic mobility $n\ne 2$. The main problem in this case is that the It\^o expansion of the energy contains terms, which can explode for $u=0$, see \cite[Eq. (4.10)]{dareiotis2021nonnegative} and hence a control on the smallness of the solution is required. The strategy of this article is to let go of the energy estimate and to base the whole analysis on the $\alpha$-entropy estimates as well as the conservation of mass $\partial_t \int u\,dx=0$, which was carried out for the deterministic thin-film equation in \cite[Section 6]{weak_trace}. By restricting ourselves to the range $n\in (2,3)$, we can take $\alpha\in (-1,2-n)$, such that \eqref{Eq202} holds true with $\theta=1$ and $\int G_\alpha(u)\,dx<\infty$ also for functions $u$ without full support. Since the $\alpha$-entropy estimate yields less control on the spatial derivatives of the solution, a weaker notion of solutions than in the cases in which the energy estimate is available is necessary for our analysis. Therefore, we use a weak form of the thin-film operator  introduced for the higher dimensional setting \cite[Eq. (3.2)]{Passo98ona} and  allow the solutions $u$ take values in the space of measures. However, based on the $\alpha$-entropy estimate, we derive that $u(t)$ admits a density with respect to the Lebesgue measure for almost all $t$, which lies in a first order Sobolev space. 
	\subsection{Main result}
	In this article, we prove the existence of weak martingale solutions to \eqref{Eq201}. To this end, we write $m(u)= |u|^n$ and $q(u)=\sqrt{m(u)}=|u|^\frac{n}{2}$ for the mobility function and its square-root, respectively.	We restrict ourselves throughout this article to mobility exponents from the following range.
	\begin{ass}\label{Condition_n}We assume that  $n\in [\frac{8}{3},3)$.
	\end{ass}
	Moreover, we assume $W$ in \eqref{Eq201} to be spatiotemporal noise, which is white in time and colored in space. Specifically, we assume that the noise $W$ is given by the time derivative of the Wiener process $B$, defined by 
	\begin{equation}\label{Eq_Def_B}
		B(t)\,=\, \sum_{k\in \ZZ} \sigma_k \beta^{(k)}
	\end{equation}
	for a family of independent $\mathfrak{F}$-Brownian motions $(\beta^{(k)})_{k\in \ZZ}$ on a  probability space $(\Omega, \mathfrak{A}, \PP)$ with filtration $\mathfrak{F}$.
	Here, we assume that
	\begin{equation}\label{Eq_Def_sigma}\sigma_k
		\,=\, \lambda_k f_k,
	\end{equation}
	where $\Lambda= (\lambda_k)_{k\in \ZZ}$ is a sequence of real numbers and
	\begin{equation}\label{Eq80}
		f_k(x)\,=\, \begin{cases}
			\cos(2\pi k x), &k\ge 1,\\
			\sin(2\pi k x), &k\le -1,\\
			1, &\text{else}
		\end{cases}
	\end{equation}
	are the eigenfunctions of the periodic Laplace operator. 
	We  impose the following smoothness condition on the process $B$ in terms of the sequence $\Lambda$.
	\begin{ass}\label{Condition_sigma}
		It holds
		$\sum_{k\in \ZZ} (k\lambda_k)^2\,<\,\infty$.
	\end{ass}
	As initial values to \eqref{Eq201}, we allow for non-negative Borel measure-valued random variables. To be precise, we introduce the sigma-field $\mathcal{Z}$ on $\MM(\TT)$ as the sigma-field generated by the pre-dual space $C(\TT)$, i.e. an $\MM(\TT)$-valued random variable $X$ is $\Z$-measurable, iff $\langle X, \vp \rangle$ is measurable for each $\vp\in C(\TT)$.
	\begin{ass}\label{Condition_IV} The initial value  $
		u_0\colon \Omega \to \mathcal{M}(\TT)$ is $\mathfrak{F}_0$-$\Z$ measurable and $u_0\ge 0$  almost surely. 
	\end{ass}
	Interpreting \eqref{Eq201} in Stratonovich form, using the notation $q(u)=|u|^\frac{n}{2}$ and the description of the noise \eqref{Eq_Def_B}, we obtain the equivalent It\^o formulation
	\begin{align}\label{STFE_Integral_form}
		d u \,=\, 
		-\, \partial_x( u^n  \partial_x^3 u)\,dt\,
		+ \,\tfrac{1}{2}\sum_{k\in \ZZ}	\partial_x (\sigma_k q'(u) \partial_x (\sigma_k q(u)) )\, dt 
		\,+	\,\sum_{k\in \ZZ} \partial_x (\sigma_k q(u)) \,d\beta^{(k)}
	\end{align}
	of the stochastic thin-film equation. 
	In order to obtain a sufficiently weak formulation for the case of a possibly compactly supported initial value $u_0$, we
	test \eqref{STFE_Integral_form} with a smooth function $\vp\in C^\infty(\TT)$ in the dual pairing $\langle \cdot, \cdot \rangle$ on $\TT$ and rewrite the thin-film operator in the weak form introduced in \cite[Eq. (3.2)]{Passo98ona}. We obtain the formulation
	\begin{align}\begin{split}
			\label{Eq_STFE_rewritten}
			d \langle{u}, \vp\rangle\,=\,& \Bigl[\tfrac{n(n-1)}{2} \langle
			{u}^{n-2} (\partial_x {u})^3, \partial_x\vp\rangle \,+\,
			\tfrac{3n}{2} \langle {u}^{n-1}(\partial_x{u})^2, \partial_x^2\vp\rangle\,+\,
			\langle {u}^{n}\partial_x {u}, \partial_x^3\vp\rangle\Bigr]\,dt 
			\\&-\,\tfrac{1}{2}\sum_{k\in \ZZ}\langle
			\sigma_k q'({u}) \partial_x (\sigma_k q({u})) ,\partial_x \vp\rangle\, dt
			\,-\,\sum_{k\in \ZZ} 
			\langle
			\sigma_k q({u}), \partial_x \vp
			\rangle
			\, d{\beta}^{(k)},
		\end{split}
	\end{align}
	which gives rise to the following notion of martingale solutions to the stochastic thin-film equation  on a fixed time horizon $T\in (0,\infty)$.
	\begin{defi}\label{Defi_sol}
		A martingale solution to \eqref{Eq_STFE_rewritten} consists out of a probability space $(\tilde{\Omega}, \tilde{\mathfrak{A}}, \tilde{\PP})$ with a filtration $\tilde{\mathfrak{F}}$ satisfying the usual conditions, a family of independent $\tilde{\mathfrak{F}}$-Brownian motions $(\tilde{\beta}^{(k)})_{k\in \ZZ}$ and a non-negative, vaguely continuous, $\tilde{\mathfrak{F}}$-adapted  $(\MM(\TT), \Z)$-valued process $\tilde{u}$ defined on $[0,T]$ such that $\tilde{\PP}\otimes dt$-almost everywhere $\tilde{u}\in W^{1,1}(\TT)$ with 
		\begin{equation}\label{Eq255}
			\tilde{u}^{n-2}(\partial_x \tilde{u})^3, \, \tilde{u}^{n-1}(\partial_x \tilde{u})^2, \, \tilde{u}^{n}\partial_x \tilde{u} , \, \tilde{u}^{n-2}\partial_x \tilde{u},\, \tilde{u}^n \,\in\,  L^1([0,T]\times \TT)
		\end{equation}
	almost surely and for every $\vp\in C^\infty(\TT)$, $t\in [0,T]$ we have 
		\begin{align}\begin{split}
			\label{Eq_1}
			\langle\tilde{u}(t), \vp\rangle\,-\, \langle\tilde{u}(0), \vp\rangle\,=\,& \tfrac{n(n-1)}{2}\int_0^t \langle
			\tilde{u}^{n-2} (\partial_x \tilde{u})^3 ,\partial_x\vp\rangle\, ds
			\,+\,
			\tfrac{3n}{2} \int_0^t\langle \tilde{u}^{n-1}(\partial_x\tilde{u})^2 ,\partial_x^2\vp\rangle \, ds\\&+\,
			\int_0^t\langle  \tilde{u}^{n}\partial_x \tilde{u}, \partial_x^3\vp \rangle \,ds
			\,-\,\tfrac{1}{2}\sum_{k\in \ZZ}\int_0^t \langle
			\sigma_k q'(\tilde{u}) \partial_x (\sigma_k q(\tilde{u})) ,\partial_x \vp\rangle \, ds
			\\&-\,\sum_{k\in \ZZ} \int_0^t 
			\langle
			\sigma_k q(\tilde{u}), \partial_x \vp
			\rangle
			\, d\tilde{\beta}^{(k)}_s.
		\end{split}
	\end{align}
	\end{defi}
	\begin{rem}\begin{enumerate}[label=(\roman*)]
		\item 
		The requirement \eqref{Eq255} ensures that the deterministic and stochastic integrals in \eqref{Eq_1} converge.
		\item
		We demand in Definition \ref{Defi_sol}  that for every $\vp\in C^\infty(\TT)$, $t\in [0,T]$ the identity \eqref{Eq_1} holds outside of some $\tilde{\PP}$-null-set. Since the vague continuity of $\tilde{u}$ implies that all the processes in \eqref{Eq_1} are continuous in time, this null-set can be chosen independently of $t$.
		\end{enumerate}
	\end{rem}

	In the course of this article, we prove the existence of weak martingale solutions to the stochastic thin-film equation in the sense of Definition \ref{Defi_sol} under the previous assumptions.
	\begin{thm}\label{thm_main}
		Under the Assumptions \ref{Condition_n}, \ref{Condition_sigma}, \ref{Condition_IV}   and $\sigma_k$ given by \eqref{Eq_Def_sigma}, there exists a martingale solution $(\tilde{\Omega}, \tilde{\mathfrak{A}}, \tilde{\PP})$, $\tilde{\mathfrak{F}}$, $(\tilde{\beta}^{(k)})_{k\in \ZZ}$, $\tilde{u}$ to \eqref{Eq_STFE_rewritten} in the sense of Definition \ref{Defi_sol} such that  $\tilde{u}(0)$ has the same distribution as $u_0$ on $(\MM(\TT), \Z)$. Moreover, $\tilde {u}$ admits the following properties.
		\begin{enumerate}[label=(\roman*)]
			\item  \label{Main_Item_Mass} Mass is conserved, i.e. almost surely   $\langle\tilde{u}(t), \mathbbm{1}_{\TT}\rangle=\langle\tilde{u}(0), \mathbbm{1}_{\TT}\rangle$ for all $t\in [0,T]$.
			\item \label{Main_Item2} Almost surely, $ \tilde{u}\in  L^p([0,T]\times \TT)\cap L^r(0,T;W^{1,r}(\TT)) $ for each $p\in  (n+4,7)$ and $r\in(\frac{n+4}{2},\frac{7}{2})$ with 
			\begin{align}&\label{Eq224}
				\tilde{\EE}\bigl[
				\|\tilde{u}\|_{L^p([0,T]\times \TT)}^p
				\bigr]\, \lesssim_{n,p,\Lambda, T}\,
				\EE\bigl[
				\|u_0\|_{\MM(\TT)}^{p-n} +
				\|u_0\|_{\MM(\TT)}^{p} 
				\bigr] ,\\&\label{Eq225}
					\tilde{\EE}\bigl[
				\|\partial_x \tilde{u}\|_{L^r([0,T]\times \TT)}^r
				\bigr]\, \lesssim_{n,r,\Lambda, T}\,
				\EE\bigl[
				\|u_0\|_{\MM(\TT)}^{r-n} +
				\|u_0\|_{\MM(\TT)}^{r} 
				\bigr] ,
			\end{align}
			whenever the respective right-hand side is finite.
			\item \label{Main_Item6} Almost surely, we have $\tilde{u}\in C([0,T];H^{\kappa}(\TT))$	for $\kappa\in (-\infty, \frac{-1}{2}) $ and if $\gamma\in(0,\frac{1}{2})$, $\mu\in (\frac{n+4}{n+2},\frac{7}{n+2})$ and $\nu\in (1,\frac{7}{n+4})$, it holds
			\begin{align}\begin{split}\label{Eq243}&
					\tilde{\EE}\Bigl[
					\left\|
					\tilde{u}
					\right\|^{\nu}_{W^{\gamma, \frac{2\nu}{2-\nu}}(0,T;W^{-3,\mu}(\TT))}
					\Bigr]\,\lesssim_{\gamma, n, \mu,\nu, \Lambda, T}\, \EE\Bigl[
					\|u_0\|_{\MM(\TT)}^{(n-1-\frac{n^2}{p_{\mu,\nu}})\nu}\,+\,
					\|u_0\|_{\MM(\TT)}^{(n+1)\nu} \Bigr],
				\end{split}
			\end{align}
			where $p_{\mu,\nu}=\max\{ \mu(n+2), \nu(n+4) \}$, whenever the right-hand side is finite.
		
			\item \label{Main_Item5} For $\alpha \in (-1, 2-n)$
			 we have almost surely 
			 \begin{equation}\label{Eq241} \tilde{u}^\frac{\alpha+n+1}{4}\in L^4(0,T;W^{1,4}(\TT)),\quad \tilde{u}^\frac{\alpha+n+1}{2}\in L^2(0,T;H^2(\TT))
			  \end{equation}
			 and
			it holds the $\alpha$-entropy type estimate
			\begin{equation}\label{Eq210}
			\tilde{\EE}\Bigl[
			\bigl\|\partial_x \tilde{u}^\frac{\alpha+n+1}{4}\bigr\|_{L^4([0,T]\times \TT)}^4\,+\, 
			\bigl\|\partial_{x}^2 \tilde{u}^\frac{\alpha+n+1}{2}\bigr\|_{L^2([0,T]\times \TT)}^2
			\Bigr]\,\lesssim_{\alpha, n,\Lambda,T}\,
			\EE\bigl[\|u_0\|_{\MM(\TT)}^{\alpha+1} + \|u_0\|_{\MM(\TT)}^{\alpha+n-1}
			\bigr],
			\end{equation}
			if the right-hand side is finite.
		\end{enumerate}
	\end{thm}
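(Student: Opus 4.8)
The plan is to run the stochastic compactness method using only the $\alpha$-entropy dissipation and mass conservation: start from the strictly positive martingale solutions provided by \cite[Theorem 2.2]{dareiotis2021nonnegative}, extract a priori bounds depending on the initial datum only through its total mass, and pass to the limit along an approximation of $u_0$. \textbf{(Step 1: approximation of the initial datum.)} I would set $u_0^\varepsilon := u_0 \ast \rho_\varepsilon + \varepsilon$ for a standard mollifier $(\rho_\varepsilon)$ on $\TT$; then $u_0^\varepsilon$ is smooth, strictly positive, $\mathfrak{F}_0$-measurable, converges vaguely to $u_0$ almost surely, and satisfies $\langle u_0^\varepsilon,\mathbbm{1}_{\TT}\rangle = \|u_0\|_{\MM(\TT)} + \varepsilon \to \|u_0\|_{\MM(\TT)}$ almost surely. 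Since $\alpha+1 \in (0,1)$ for $\alpha \in (-1,2-n)$, Jensen's inequality yields $\int_{\TT} G_\alpha(u_0^\varepsilon)\,dx \lesssim_{\alpha,n} \|u_0\|_{\MM(\TT)}^{\alpha+1} + \|u_0\|_{\MM(\TT)}^{\alpha+n-1} + 1$ uniformly in $\varepsilon$. Applying \cite[Theorem 2.2]{dareiotis2021nonnegative} for each $\varepsilon$ (its hypotheses are met since $u_0^\varepsilon$ is smooth and bounded away from zero) produces a martingale solution $u_\varepsilon$, on some stochastic basis with driving Brownian motions $(\beta^{(k)}_\varepsilon)_{k\in\ZZ}$, whose initial value has the law of $u_0^\varepsilon$, which conserves mass and, after an integration by parts, satisfies \eqref{Eq_1}.

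\textbf{(Step 2: uniform $\alpha$-entropy estimates.)} The core step is to apply It\^o's formula to $\int_{\TT} G_\alpha(u_\varepsilon)\,dx$ — rigorously at the level of the regularizations underlying the construction in \cite{dareiotis2021nonnegative}, then letting those tend to zero — and to observe that, after the Stratonovich cancellations used in \cite{KleinGruen22,Sauerbrey_2021,dareiotis2021nonnegative}, the It\^o correction contributes, besides a local martingale, only terms that are either absorbed into the thin-film dissipation in \eqref{Eq202} with $\theta = 1$ or controlled by $\sum_{k\in\ZZ}(k\lambda_k)^2$ times lower-order entropy expressions. Taking expectations and using the Burkholder--Davis--Gundy inequality and Gr\"onwall's lemma then gives, uniformly in $\varepsilon$ and for every $\alpha \in (-1,2-n)$, the bound \eqref{Eq210} with $u_0$ replaced by $u_0^\varepsilon$, i.e.\ uniform control of $\tilde{\EE}\bigl[\|\partial_x u_\varepsilon^{\frac{\alpha+n+1}{4}}\|_{L^4([0,T]\times\TT)}^4 + \|\partial_x^2 u_\varepsilon^{\frac{\alpha+n+1}{2}}\|_{L^2([0,T]\times\TT)}^2\bigr]$, together with $\langle u_\varepsilon(t),\mathbbm{1}_{\TT}\rangle = \|u_0^\varepsilon\|_{\MM(\TT)}$. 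A Gagliardo--Nirenberg interpolation between $\partial_x u_\varepsilon^{\frac{\alpha+n+1}{4}} \in L^4([0,T]\times\TT)$ and $u_\varepsilon \in L^\infty(0,T;L^1(\TT))$, with $\alpha$ varying over $(-1,2-n)$, converts this into the uniform analogues of \eqref{Eq224} and \eqref{Eq225} for all $p \in (n+4,7)$ and $r \in (\tfrac{n+4}{2},\tfrac{7}{2})$ — the relations $p = \alpha+n+5$ and $r = \tfrac{p}{2}$ exhausting the ranges — which is the stochastic counterpart of the deterministic computation in \cite[Section 6]{weak_trace} (cf.\ also \cite{beretta1995nonnegative}). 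Finally, H\"older's inequality applied to the drift nonlinearities in \eqref{Eq_STFE_rewritten} and to $q(u_\varepsilon) = u_\varepsilon^{n/2}$, together with a Kolmogorov--Burkholder--Davis--Gundy moment bound for the stochastic integral, yields the uniform fractional-in-time estimate \eqref{Eq243}; the exponent ranges $\mu \in (\tfrac{n+4}{n+2},\tfrac{7}{n+2})$ and $\nu \in (1,\tfrac{7}{n+4})$ are essentially the integrability indices of $u_\varepsilon^n\partial_x u_\varepsilon$ and $u_\varepsilon^{n-2}(\partial_x u_\varepsilon)^3$ produced by the previous bounds.

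\textbf{(Step 3: tightness, representation, and passage to the limit.)} Using these uniform bounds and the Aubin--Lions--Simon lemma, the laws of $\bigl(u_\varepsilon,(\beta^{(k)}_\varepsilon)_{k\in\ZZ}\bigr)$ are tight on $\bigl(L^r(0,T;W^{1,r}(\TT)) \cap C([0,T];H^\kappa(\TT))\bigr) \times \prod_{k\in\ZZ}C([0,T];\RR)$ for $\kappa < -\tfrac12$, and the laws of $u_\varepsilon(0)$ are tight on $(\MM(\TT),\Z)$. By the Skorokhod--Jakubowski representation theorem there are, on a new probability space $(\tilde\Omega,\tilde{\mathfrak A},\tilde\PP)$ with a filtration $\tilde{\mathfrak F}$ satisfying the usual conditions, random variables $\tilde u_\varepsilon$, $(\tilde\beta^{(k)}_\varepsilon)_{k\in\ZZ}$ with the same joint laws and $\tilde u_\varepsilon \to \tilde u$, $\tilde\beta^{(k)}_\varepsilon \to \tilde\beta^{(k)}$ almost surely in the above topologies. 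Mass conservation forces $\sup_{t\in[0,T]}\langle\tilde u(t),\mathbbm{1}_{\TT}\rangle < \infty$ almost surely, so $\tilde u$ is vaguely continuous and $(\MM(\TT),\Z)$-valued; Fatou's lemma and weak lower semicontinuity carry the bounds of Step 2 over to $\tilde u$, proving \ref{Main_Item2}, \ref{Main_Item6} and \ref{Main_Item5}, while $\tilde u_\varepsilon \to \tilde u$ strongly in $L^1([0,T]\times\TT)$ (hence $\tilde u(t) \in W^{1,1}(\TT)$ admits a density for a.e.\ $t$) together with H\"older's inequality give \eqref{Eq241} and \eqref{Eq255}. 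The equi-integrability supplied by the strict inequalities in \eqref{Eq224} and \eqref{Eq225} (the exponents there exceeding those strictly needed in \eqref{Eq_1}) lets Vitali's theorem pass every deterministic term of \eqref{Eq_1} for $\tilde u_\varepsilon$ to the limit and gives $q(\tilde u_\varepsilon) \to q(\tilde u)$ in $L^2([0,T]\times\TT)$; a standard martingale identification — showing that, with respect to the filtration generated by $\tilde u$ and $(\tilde\beta^{(k)})_{k\in\ZZ}$, the process $t \mapsto \langle\tilde u(t),\vp\rangle - \langle\tilde u(0),\vp\rangle - \int_0^t(\text{drift})\,ds$ is a continuous martingale with quadratic variation $\sum_{k\in\ZZ}\int_0^t\langle\sigma_k q(\tilde u),\partial_x\vp\rangle^2\,ds$ and the correct joint variation with each $\tilde\beta^{(k)}$ — then identifies the stochastic integral, so that $(\tilde\Omega,\tilde{\mathfrak A},\tilde\PP)$, $\tilde{\mathfrak F}$, $(\tilde\beta^{(k)})_{k\in\ZZ}$, $\tilde u$ is a martingale solution in the sense of Definition \ref{Defi_sol} with $\tilde u(0)$ of the same law as $u_0$. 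Item \ref{Main_Item_Mass} follows from \eqref{Eq_1} with $\vp = \mathbbm{1}_{\TT}$, since then the right-hand side vanishes.

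\textbf{(Expected main obstacle.)} I expect the hard part to be Step 2. Justifying It\^o's formula for $\int_{\TT} G_\alpha(u_\varepsilon)\,dx$ requires descending to the regularized mobility of \cite{dareiotis2021nonnegative}, and the decisive point is the Stratonovich-to-It\^o bookkeeping, which must show that every genuinely positive correction term is dominated either by the limited thin-film dissipation of \eqref{Eq202} with $\theta = 1$ or by $\sum_{k\in\ZZ}(k\lambda_k)^2$ acting on lower-order entropy — with no recourse to the energy estimate, whose It\^o expansion is exactly what blows up for vanishing film height. A secondary difficulty is to pass the nonlinearities $\tilde u^{n-2}(\partial_x\tilde u)^3$, $\tilde u^{n-1}(\partial_x\tilde u)^2$ and $\tilde u^n\partial_x\tilde u$ to the limit using only $\alpha$-entropy and mass control; this is what dictates the measure-valued, very weak formulation of Definition \ref{Defi_sol} and makes the sharp $L^p$/$W^{1,r}$ interpolation range of \ref{Main_Item2}, and the slight supercriticality used for equi-integrability, indispensable.
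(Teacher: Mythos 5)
Your overall strategy coincides with the paper's: mollify and lift the initial value, close the $\alpha$-entropy estimate for the positive approximations of \cite{dareiotis2021nonnegative}, interpolate against the conserved mass to get the $L^p$/$W^{1,r}$ bounds, and run Jakubowski--Skorokhod plus a martingale identification. However, there are two genuine gaps.

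First, your tightness argument relies on unconditional moment bounds such as $\tilde{\EE}\bigl[\|\partial_x u_\varepsilon^{\frac{\alpha+n+1}{4}}\|_{L^4}^4\bigr]\lesssim \EE\bigl[\|u_0\|_{\MM(\TT)}^{\alpha+1}+\dots\bigr]$, but Assumption \ref{Condition_IV} imposes no integrability on $\|u_0\|_{\MM(\TT)}$, so these expectations may all be infinite and Chebyshev then yields nothing. The paper resolves this by proving every a priori estimate \emph{conditionally on} $\mathfrak{F}_0$ (so the right-hand side is the random quantity $\|u_0\|_{\MM(\TT)}^{\alpha+1}+\dots$), partitioning $\Omega$ into the sets $A_j=\{\|u_0\|_{\MM(\TT)}\in[j-1,j)\}$, proving tightness of $\mathbbm{1}_{A_j}u_\epsilon$ for each $j$, and invoking a localized-tightness lemma; the indicators $\mathbbm{1}_{A_j}$ (and an auxiliary copy of $\|u_0\|_{\MM(\TT)}$) must then be carried through the Skorokhod representation as additional components. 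Without some such localization your Step 3 does not get off the ground for general measure-valued $u_0$, and the moment estimates \eqref{Eq224}--\eqref{Eq210} cannot be transferred to the limit by Fatou in the form you state them. (Relatedly, your $u_0^\varepsilon=u_0*\rho_\varepsilon+\varepsilon$ lacks the cutoff $\mathbbm{1}_{\{\|u_0\|_{\MM(\TT)}<1/\varepsilon\}}$ that makes the regularized datum bounded in $H^1(\TT)$ uniformly in $\omega$, as required to apply \cite[Theorem 2.2]{dareiotis2021nonnegative}.)

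Second, identifying the cubic nonlinearity $\tilde{u}^{n-2}(\partial_x\tilde{u})^3$ in the limit via Vitali requires pointwise a.e.\ convergence of $\partial_x\tilde{u}_\epsilon$, which neither strong convergence of $\tilde{u}_\epsilon$ in $L^p([0,T]\times\TT)$ nor weak convergence in $L^r(0,T;W^{1,r}(\TT))$ provides. The paper obtains it by additionally propagating the powers $u_\epsilon^{\frac{\alpha_l+n+1}{2}}$ through the compactness argument: the second-order part of the $\alpha$-entropy dissipation gives weak $L^2(0,T;H^2)$ compactness of these powers, which combined with their strong $L^2$ convergence yields strong convergence of $\partial_x\tilde{u}_\epsilon^{\frac{\alpha_l+n+1}{4}}$ in $L^2$ and hence a.e.\ convergence of $\partial_x\tilde{u}_\epsilon$ on $\{\tilde{u}>0\}$ (the contribution on $\{\tilde{u}=0\}$ is handled separately by H\"older). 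Your tightness space omits these powers, so this identification step is not justified as written. Neither gap invalidates the architecture of your argument, but both require substantive additions.
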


\begin{rem}
	The reason to restrict ourselves to the range of mobility exponents from Assumption \ref{Condition_n} is  twofold. Firstly, the techniques used in the present proof require the assumption $n\in (2,3)$ to work out. Moreover, the additional restriction $n\ge \frac{8}{3}$ ensures that \cite[Theorem 2.2]{dareiotis2021nonnegative} applies and yields the existence of approximating solutions. Hence, proving a version of \cite[Theorem 2.2]{dareiotis2021nonnegative} for $n\in (2, \frac{8}{3})$ would also lead to an extension of the present result to the canonical range $n\in (2,3)$.
\end{rem}
\begin{rem}\label{Rem_Intbility}We convince ourselves, that the regularity statements from Theorem \ref{thm_main} suffice to deduce the most restrictive integrability assumption from Definition \ref{Defi_sol}, namely that $\tilde{u}^{n-2}(\partial_x \tilde{u})^3\in L^1([0,T]\times \TT)$. As a consequence of Theorem \ref{thm_main} \ref{Main_Item2}, we have that almost surely
	\[
	\tilde{u}\in L^{n+4}([0,T]\times \TT),\quad \partial_x\tilde{u}\in L^\frac{n+4}{2}([0,T]\times \TT).
	\]
	Hence, using H\"older's inequality, we can indeed conclude that
	\[
	\|\tilde{u}^{n-2}(\partial_x \tilde{u})^3\|_{L^1([0,T]\times \TT)}\, \le \,  \|\tilde{u}^{n-2}\|_{L^\frac{n+4}{n-2}([0,T]\times \TT)}	\|(\partial_x \tilde{u})^3\|_{L^\frac{n+4}{6}([0,T]\times \TT)}
	\]
	is almost surely finite.
\end{rem}
	\begin{rem}Let  $\tilde{u}$ be the martingale solution to \eqref{Eq_STFE_rewritten} obtained from Theorem \ref{thm_main}. 
		For $\alpha\in (-1, 2-n)$, the $\alpha$-entropy type estimate from Theorem \ref{thm_main} \ref{Main_Item5} yields the following additional properties of $\tilde{u}$.
		\begin{enumerate}[label=(\roman*)]
			\item \label{Item_Rem1} We have that $\tilde{\PP}\otimes dt$-almost everywhere $\tilde{u}^\frac{\alpha+n+1}{2}\in H^2(\TT)$. In particular, by the Sobolev-embedding $\tilde{u}\in C(\TT)$ and hence uniformly supported away from zero and bounded on compact subsets of $\{\tilde{u}>0\}$. Thus, $\tilde{u}\in H^2_{\loc}(\{\tilde{u}>0\})$ and we have 
			\begin{align}&
				\partial_x\tilde{u}^\frac{\alpha+n+1}{4}\,=\, \tfrac{\alpha+n+1}{4}\tilde{u}^\frac{\alpha+n-3}{4}\partial_x \tilde{u},\\&
				\partial_{x}^2\tilde{u}^\frac{\alpha+n+1}{2}\,=\, \tfrac{\alpha+n+1}{2}\tilde{u}^\frac{\alpha+n-1}{2}\partial_{x}^2\tilde{u}\,+\, \tfrac{(\alpha+n+1)(\alpha+n-1)}{4}\tilde{u}^\frac{\alpha+n-3}{2}(\partial_{x}\tilde{u})^2
			\end{align}
			on $\{\tilde{u}>0\}$.	Hence, we conclude that 
			\begin{align}
				\int_0^T\int_{\{\tilde{u}>0\}} \tilde{u}^{\alpha+n-3}(\partial_x\tilde{u})^4\, dx\, dt\,\lesssim_{\alpha,n} \|\partial_x \tilde{u}^\frac{\alpha+n+1}{4} \|_{L^4([0,T]\times \TT)}^4
			\end{align}
		and consequently
		\begin{align}&
			\int_0^T \int_{\{\tilde{u}>0\}}  \tilde{u}^{\alpha +n-1}(\partial_x^2 \tilde{u})^2 \, dx\, dt\,\lesssim_{\alpha,n} \, \int_0^T \int_{\{\tilde{u}>0\}}(\partial_x^2 \tilde{u}^\frac{\alpha+n+1}{2})^2+ \tilde{u}^{\alpha+n-3}(\partial_x\tilde{u})^4\, dt 
			\\&\quad \lesssim_{\alpha,n} \,  \| \partial_x^2 \tilde{u}^\frac{\alpha+n+1}{2}\|_{L^2([0,T]\times \TT)}^2\,+\,\|\partial_x \tilde{u}^\frac{\alpha+n+1}{4} \|_{L^4([0,T]\times \TT)}^4 .
		\end{align}
	Therefore, \eqref{Eq210} implies that 
	\begin{align}\begin{split}\label{Eq242}
			&
		\tilde{\EE}\biggl[
			\int_0^T\int_{\{\tilde{u}>0\}} \tilde{u}^{\alpha+n-3}(\partial_x\tilde{u})^4+ \tilde{u}^{\alpha +n-1}(\partial_x^2 \tilde{u})^2\, dx\, dt
		\biggr]\\&\quad  \lesssim_{\alpha, n, \Lambda, T} \,
			\EE\bigl[\|u_0\|_{\MM(\TT)}^{\alpha+1} + \|u_0\|_{\MM(\TT)}^{\alpha+n-1}
		\bigr]
		\end{split}
	\end{align}
	appealing to the classical form of the $\alpha$-entropy estimate \eqref{Eq202} with $\theta=1$.
			
			\item We demonstrate, that as a consequence of \eqref{Eq210} one can also recover an estimate in the spirit of \eqref{Eq202} for $\theta\ne 1$.
			Arguing as in  \ref{Item_Rem1}, we conclude that $\tilde{\PP}\otimes dt$-almost everywhere  $\tilde{u}^\theta\in H^2_{\loc}(\{\tilde{u}>0\})$ for any $\theta>0$, with 
			\[
			\partial_x^2 \tilde{u}^\theta\,=\, 
			\theta {\tilde{u}}^{\theta-1} \partial_x^2\tilde{u}\,+\, \theta(\theta-1)\tilde{u}^{\theta-2}(\partial_x \tilde{u} )^2,
			\]
			By taking the square on both sides, we obtain that
			\[
			(
			\partial_x^2 \tilde{u}^\theta)^2\,\lesssim_\theta\, 
			\tilde{u}^{2\theta-2} 
			(
			\partial_x^2\tilde{u})^2\,+\,\tilde{u}^{2\theta-4}(\partial_{x}\tilde{u})^4.
			\]
			Hence, using \eqref{Eq242}  we infer 
			\[
				\EE\biggl[
			\int_0^{T}\int_{\TT} \tilde{u}^{\alpha+n-2\theta+1}
			(\partial_x^2\tilde{u}^\theta )^2 \,dx\,ds	\biggr]\, \lesssim_{\alpha,  n,\theta,\Lambda,T} \,\EE\bigl[\|u_0\|_{\MM(\TT)}^{\alpha+1} + \|u_0\|_{\MM(\TT)}^{\alpha+n-1}
			\bigr].
			\]
			\item In \cite[Corollary 3.2]{KleinGruen22} it is shown that the solutions to the stochastic thin-film equation constructed in \cite{KleinGruen22} admit a zero contact angle almost everywhere based on the finiteness of the $\alpha$-entropy dissipation. Following the proof of \cite[Corllary 3.2]{KleinGruen22} we obtain the same statement as a consequence of Theorem \ref{thm_main} \ref{Main_Item5}, namely that   $\tilde{\PP}\otimes dt$-almost everywhere, $\tilde{u}$ admits $0$ as its classical derivative at every point from its zero set.
		\end{enumerate}
	\end{rem}

	\subsection{Outline of the proof}\label{SubSec_Proof}
	As pointed out in the introduction, the main innovation of this article is to provide solutions to \eqref{Eq201} for initial data without full support in the case of a non-quadratic mobility $n\ne 2$. The main difficulty in the analysis of \eqref{Eq201} is to close the deterministic a-priori estimates \eqref{Eq204}, \eqref{Eq205} and \eqref{Eq202} in the stochastic setting, where at least in the case of Stratonovich noise, the entropy and $\alpha$-entropy estimate seem to hold for a wide range of $n$, see for example \cite[Lemma 4.3]{dareiotis2021nonnegative}. However, the additional energy production due to the stochastic term in \eqref{Eq201} seems to require a control on the smallness of the solution, which we are unable to provide in the case of initial data without full support. Since at least for the case $n\ge 2$, the entropy estimate also fails for such initial values, we decided to  rely the whole analysis only on the remaining $\alpha$-entropy estimate \eqref{Eq202} for $\alpha>-1$ as well as the conservation of mass. The key observation for the proof is that the use of the weak formulation \eqref{Eq_STFE_rewritten} and an interpolation of the $\alpha$-entropy dissipation and conservation of mass is sufficient to conclude compactness in suitable spaces. 
	
	To elaborate on the interpolation argument, we confine ourselves to the deterministic setting and set $w=u^\frac{\alpha+n+1}{4}$, where $u$ is a solution to \eqref{Eq203} and $\alpha\in(-1,2-n)$. Then, by the chain rule, integration of \eqref{Eq202} provides an estimate on 
	\begin{equation}\label{Eq206}
	\int_0^T \| \partial_x w \|_{L^4(\TT)}^4 \, dt\,\eqsim_{\alpha,n}\, \int_0^T \int_{\TT} \bigl(u^\frac{\alpha+n-3}{4}\partial_x u\bigr)^4 \,dx\,dt\,=\,
	\int_0^T \int_{\TT} u^{\alpha+n-3}(\partial_x u)^4\,dx\,dt
	\end{equation}
	and the conservation of mass on
	\[
	\sup_{t\in[0,T]} \|w\|_{L^\frac{4}{\alpha+n+1}(\TT)}^\frac{4}{\alpha+n+1}\,=\,
	\sup_{t\in [0,T]} \|u\|_{L^1(\TT)},
	\]
	for non-negative $u$. An application of the Gagliardo-Nirenberg interpolation inequality  allows to find between this estimate in $L^4(0,T;\dot{W}^{1,4}(\TT))$ and $L^\infty(0,T;L^\frac{4}{\alpha+n+1}(\TT))$ an estimate on $w$, which has the same integrability in space and time, see Lemma \ref{lemma_int_p}. Using the identity 
	\[\|u\|_{L^p([0,T]\times \TT)}\,=\, \|w\|_{L^\frac{4p}{\alpha+n+1}([0,T]\times \TT)}^\frac{\alpha+n+1}{4},\]
	 this can be translated to an estimate on $u$, if $p<7$. Together with the estimate on \eqref{Eq206} and the application of H\"older's inequality
	\[
	\|\partial_x u\|_{L^r([0,T]\times \TT)}\,\eqsim_{\alpha,n}\, \bigl\|u^\frac{3-(\alpha+n)}{4}\partial_x w \bigr\|_{L^r([0,T]\times \TT)}\,\le\, 
	\bigl\|u^\frac{3-(\alpha+n)}{4}\bigr\|_{L^{\frac{4p}{3-(\alpha+n)}}([0,T]\times \TT)}\|\partial_x w\|_{L^4([0,T]\times \TT)}
	\]
	a space-time integral estimate on $\partial_x u$ for $r<\frac{7}{2}$ is obtained in Lemma \ref{Lemma_int_r}. Then, for example, the first term on the right-hand side of \eqref{Eq_STFE_rewritten} can be estimated via H\"older's inequality 
	\[
	\|u^{n-2}(\partial_x u)^3\|_{L^{\frac{7}{n+4}-}([0,T]\times \TT)}\,\le \,
	\|u^{n-2}\|_{L^{\frac{7}{n-2}-}([0,T]\times \TT)}\|(\partial_x u)^3\|_{L^{\frac{7}{6}-}([0,T]\times \TT)},
	\]
	where $\frac{7}{n+4}>1$ by Assumption \ref{Condition_n}. This turns out to be enough to conclude some temporal regularity of $u$ in Lemma \ref{Lemma_reg_tf_int_New} and also identify the term $u^{n-2}(\partial_x u)^3$ in the limit using Vitali's convergence theorem, as demonstrated in the proof of Lemma \ref{Lemma_Limits} \ref{Item2}.
	
	\subsection{Discussion of the result}
	The fact that the result from \cite[Section 6]{weak_trace} can be generalized  to the stochastic setting is essentially due to the use of Stratonovich noise, which is compatible with the $\alpha$-entropy estimates. This
	allows us to construct solutions to \eqref{Eq201} for non-negative initial values from the space of measures, 
	including the interesting case of the Dirac distribution.
	Moreover, only closing the $\alpha$-entropy estimate requires less spatial regularity of the noise compared to cases in which also the energy estimate is used. Indeed, Assumption \ref{Condition_sigma} essentially expresses that $B$ is a $Q$-Wiener process in $H^1(\TT)$, while the reviewed results in \cite{dareiotis2021nonnegative,fischer_gruen_2018, GessGann2020,KleinGruen22,metzger2022existence,Sauerbrey_2021}  all require that $B$ converges in $H^2(\TT^d)$.	
	
	Although it would be preferable to have solutions satisfying also the energy estimate, our result is the only one so far providing solutions to the stochastic thin-film equation, which allows for initial values without full support in the case of a non-quadratic mobility. It is a natural question, whether  the $\alpha$-entropy estimate alone is sufficient to verify qualitative statements e.g. on the propagation speed of the solution.
	
	\subsection{Notation} 
		Let $\X$ be a Banach space and $\nu$ a non-negative measure on a  measurable space $S$. Then we write $L^p(S, \mathcal{X})$, $p\in [1,\infty]$, for the Bochner space on $S$ with values in $\mathcal{X}$, equipped with the norm
	\[
	\|f\|_{L^p(S,\mathcal{X})}^p\,=\, \int_S   \|f\|_{\mathcal{X}}^p \, d\nu,  \quad p\in [1,\infty),
	\]
	and the usual modification for $p=\infty$. In the case $\mathcal{X}=\RR$, we simply write $L^p(S)$ or, if $S$ is equipped with the counting measure, $l^p(S)$. 
	
	If $[0,T]$ is an interval, we use the notation $L^p(0,T; \mathcal{X})$ for $L^p([0,T]; \X)$. Moreover, we write $C([0,T],\mathcal{X})$ for the space of continuous,  $\X$-valued functions equipped with the norm
	\[
	\|f\|_{C([0,T];\X)}\,=\, \sup_{0\le t\le T} \|f(t)\|_{\X}.
	\]
	We write $W^{\kappa, p}(0,T;\mathcal{X})$, $\kappa\in(0,1)$, $p\in[1,\infty)$, for the Sobolev-Slobodeckij space equipped with the norm 
	\[
	\|f\|_{W^{\kappa, p}(0,T;\mathcal{X})}^p\,=\, \|f\|_{L^p(0,T; \X)}^p\,+\, \int_{0}^T\int_{0}^T \frac{\|f(t)-f(s)\|_{\X}^p}{|t-s|^{1+\kappa p}}\,dt\, ds.
	\]
	For $l\in \NN$ and $p\in[1,\infty]$, $W^{l,p}(0,T;\X)$ denotes the usual Sobolev space with norm 
	\[
	\|f\|_{W^{k,p}(0,T;\X)}^p\,=\, \sum_{j=0}^{l} \|\partial_t^{l} f\|_{L^p(0,T; \X)}^p.
	\]
	
	We write $\TT$ for the flat torus, i.e. the interval $[0,1]$ with its endpoints identified. We write $C(\TT)$ and $C^l(\TT)$, $l\in \NN$, for the continuous and $l$-times continuously differentiable functions on $\TT$, respectively, equipped with the norms
	\[
	\|f\|_{C(\TT)}\,=\, \sup_{x\in \TT}|f(x)|,\quad \|f\|_{C^l(\TT)}\,=\, \sum_{j=0}^l \|\partial_x^j f\|_{C(\TT)}.
	\]
	For the smooth functions on $\TT$ we write $C^\infty(\TT)$.
	We write  $W^{l,p}(\TT)$, $l\in \NN$, $p\in [1,\infty]$, for the  Sobolev space equipped with the norm
	\[ \|f\|_{W^{l,p}(\TT)}^p\,=\, \sum_{j=0}^l \|\partial_x^j f\|_{L^p(\TT)}^p
	\]
	and for the case $p=2$ we use the notation $H^l(\TT)=W^{l,2}(\TT)$. If $p\in (1,\infty)$, we write $W^{-l,p}(\TT)$ for the dual space of $W^{l,p'}(\TT)$ under the duality pairing $\langle \cdot, \cdot \rangle$ in $L^2(\TT)$, where $p'$ is the H\"older conjugate of $p$, and equip it with the norm 
	\[
	\|f\|_{W^{-l,p}(\TT)}\,=\, \sup_{\|g\|_{W^{l,p'}(\TT)}\le 1}|\langle f,g\rangle |.
	\]
	Denoting by $	\hat{f}(k) \,=\, \langle f, e^{2\pi i k \cdot}\rangle $, $k\in \ZZ$, the $k$-th Fourier coefficient of a distribution $f$ on $\TT$, we denote by $H^\kappa(\TT)$ for $\kappa\in \RR\setminus \NN$ the Bessel-potential space consisting of all distributions with
	\[
	\|f\|_{H^\kappa(\TT)}^2\,=\,\sum_{k\in \ZZ}|\hat{f}(k)|^2(1+|2\pi k|^{2})^\kappa
	\]
	being finite. Moreover, we write $\mathcal{M}(\TT)$ for the space of Radon measures on $\TT$ and equip it with the total variation norm $\|\nu \|_{\MM(\TT)}= |\nu|(\TT)$. 	

	Lastly, if $G$ and $H$ are separable Hilbert spaces, we denote the space of Hilbert-Schmidt operators between $G$ and $H$ by $L_2(G,H)$, which carries the norm
	\[
	\|\Psi\|_{L_2(G,H)}^2\,=\, \sum_{k\in \NN} \|\Psi g_k\|_{H}^2
	\]
	for an orthonormal basis $(g_k)_{k\in \NN}$  of $G$. If $(\Omega, \mathfrak{A}, \PP)$ is a probability space, we write $\EE[\,\cdot\,]$ and $\EE[\,\cdot\,| \mathfrak{H}]$ for the expectation and conditional expection with respect to a sub-sigma-field $\mathfrak{H}\subset \mathfrak{A}$, respectively.  For two quantities $A$ and $B$, we write $A\lesssim B$, if there exists a universal constant $C$ such that $A\le CB$. If this constant depends on parameters $p_1,\dots$, we write $A\lesssim_{p_1,\dots}B$ instead. We write $A\eqsim_{p_1,\dots} B$, whenever $A\lesssim_{p_1,\dots} B$ and $B\lesssim_{p_1,\dots}A$.
	\section{Approximate solutions} 
	In the course of this article, we prove the existence of solutions to \eqref{Eq_STFE_rewritten} under Assumptions \ref{Condition_n}, \ref{Condition_sigma} and \ref{Condition_IV}. For this purpose, we use solutions to the stochastic thin-film equation with a strictly positive, and regularized initial value and spatially smooth noise as approximations. To regularize $u_0$, we
	let $(\eta_\epsilon)_{\epsilon>0}$ be a family of non negative, smooth functions $\eta_\epsilon\colon \TT\to \RR$ sufficing $\|\eta_\epsilon\|_{L^1(\TT)}=1$ and 
	\[
	\int_{\TT\setminus B_\delta(0)} |\eta_\epsilon|\, dx\,\to \, 0
	\]
	as $\epsilon\searrow 0$ for every $\delta >0$.
	Then, we define 
	\begin{equation}\label{Eq99}
		u_{0,\epsilon}\,=\, \mathbbm{1}_{\{
			\|u_0\|_{\mathcal{M}(\TT)}\, <\, \frac{1}{\epsilon}
			\}}( u_0*\eta_\epsilon)\,+\, \epsilon,
	\end{equation}
	which is strictly positive and smooth by \cite[Theorem 2.3.20]{grafakos2014classical} with 
	\begin{equation}\label{Eq101}\partial_x^l u_{0,\epsilon} \,=\,  
		\mathbbm{1}_{\{
			\|u_0\|_{\mathcal{M}(\TT)}\, <\, \frac{1}{\epsilon}
			\}}( u_0*\partial_x^l\eta_\epsilon)
	\end{equation}
	for $l\in \NN$. By the convolution inequality
	\begin{equation}\label{Eq102}
		\|\nu *\eta\|_{L^1(\TT)}\,\le \, \|\eta\|_{L^1(\TT)} \|\nu\|_{\mathcal{M}(\TT)}
	\end{equation}
	 for general $\nu\in \mathcal{M}(\TT)$, $\eta\in C^\infty(\TT)$, see \cite[Proposition 8.49]{folland_real_analysis},
	we conclude with \eqref{Eq101} that the $\mathfrak{F}_0$-measurable $u_{0,\epsilon}$ lies in $L^\infty(\Omega, H^1(\TT))$. 
	Moreover, since $\vp*(\eta_\epsilon(-\cdot ))$ converges uniformly to $ \vp$ for $\vp\in C(\TT)$ by \cite[Theorem 1.2.21]{grafakos2014classical}, we have that
	\[
	\langle 
	u_0 *\eta_\epsilon, \vp
	\rangle\,=\, \int_{\TT} \int_{\TT} \eta_\epsilon(x-y)\, du_0 (y) \vp(x)\, dx \,=\, 
	\int_{\TT} \int_{\TT} \eta_\epsilon(x-y)\vp(x) \, dx \, d u_0 (y)\,\to\, 	\langle 
	u_0 , \vp
	\rangle,
	\]
	so that $u_{0,\epsilon}$ converges almost surely to $u_0$ in the vague topology of $\MM(\TT)$ as $\epsilon\searrow 0$. Moreover, we introduce the cut-off weights
	\begin{equation}\label{Eq87}
		\lambda_{k, \epsilon}\,=\, \begin{cases}
			\lambda_k, &|k|<\frac{1}{\epsilon},\\
			0, &\text{else}.
		\end{cases}
	\end{equation}
	and define correspondingly
	\begin{equation}\label{Eq88}
		\sigma_{k,\epsilon}\,=\, \lambda_{k, \epsilon} f_k.
	\end{equation}
	Assumption \ref{Condition_n} in conjunction with  \cite[Theorem 2.2]{dareiotis2021nonnegative} implies that for each $\epsilon\in (0,1)$ there exists a solution in the sense of \cite[Definition 2.1]{dareiotis2021nonnegative} to 
	\[
	d u \,=\, 
	-\, \partial_x( u^n  \partial_x^3 u)\,dt
	\,+\, \tfrac{1}{2}\sum_{k\in \ZZ}	\partial_x (\sigma_{k,\epsilon} q'(u) \partial_x (\sigma_{k,\epsilon} q(u)) )\, dt 
	\,+	\,\sum_{k\in \ZZ} \partial_x (\sigma_{k,\epsilon} q(u)) \,d\beta^{(k)}
	\] with initial value $u_{0,\epsilon}$. To be precise we list the following consequences of Assumption  \ref{Condition_n}, which we use in the proof of Theorem \ref{thm_main}. We make this distinction to make it transparent that an extension of the admissible $n$ in \cite[Theorem 2.2]{dareiotis2021nonnegative} would also lead to a relaxation of Assumption \ref{Condition_n}.
	\begin{cons} \label{Consequences_Ass_1} Assumption \ref{Condition_n} and  \cite[Theorem 2.2]{dareiotis2021nonnegative} imply the following.
	\begin{enumerate}[label=(\roman*)]
		\item \label{Item_Consequences_1} We have $n\in (2,3)$.
		\item Let $\epsilon>0$. Then there exists a probability space $(\tilde{\Omega}_\epsilon,\tilde{\mathfrak{A}}_\epsilon , \tilde{\PP}_\epsilon)$ with a filtration $\tilde{\mathfrak{F}}_\epsilon$ satisfying the usual conditions, a family of independent $\tilde{\mathfrak{F}}_\epsilon$-Brownian motions $(\tilde{\beta}^{(k)}_\epsilon)_{k\in \ZZ}$, an $\tilde{\mathfrak{F}}_\epsilon$-adapted, weakly continuous $H^1(\TT)$-valued process $\tilde{u}_\epsilon$ and an $\tilde{\mathfrak{F}}_{\epsilon,0}$-measurable random variable $\tilde{\xi}_\epsilon$,  subject to the following properties.
		\item  $(\tilde{u}_\epsilon(0),\tilde{\xi}_\epsilon)$ has the same distribution as $\bigl(u_{0,\epsilon}, \|u_0\|_{\MM(\TT)}\bigr)$.
		\item \label{Item_Consequences_4} We have $\tilde{\PP}_\epsilon\otimes dt\otimes dx$-almost everywhere $\tilde{u}_\epsilon>0$.
		\item \label{Item_Consequences_5} It holds 
		\[
		\tilde{\EE}_\epsilon\biggl[
		\sup_{0\le t\le T} \|\tilde{u}_\epsilon(t)\|_{H^1(\TT)}^{2n}\,+\, \|\mathbbm{1}_{\{\tilde{u}_\epsilon> 0\}}q(\tilde{u}_\epsilon)\partial_x^3 \tilde{u}_\epsilon\|_{L^2([0,T]\times \TT)}^4\,+\,
		\|\tilde{u}_\epsilon\|_{L^2(0,T;H^2(\TT))}^4
		\biggr]\,<\, \infty.
		\]
		\item \label{Item_Consequences_6} For all $\vp\in C^\infty(\TT)$ and $t\in [0,T]$, it holds
		\begin{align}\begin{split}
				\label{Eq55_New}
				\langle \tilde{u}_\epsilon(t), \vp \rangle\,-\,
				\langle \tilde{u}_\epsilon(0), \vp \rangle\,=\,&
				\int_0^t\int_{\{\tilde{u}_\epsilon>0\}} m(\tilde{u}_\epsilon) \partial_x^3\tilde{u}_\epsilon \partial_x\vp \,dx\,ds
				\\&-\,\tfrac{1}{2}\sum_{k\in \ZZ}\int_0^t \langle
				\sigma_{k,\epsilon} q'(\tilde{u}_\epsilon) \partial_x (\sigma_{k,\epsilon} q(\tilde{u}_\epsilon)) ,\partial_x \vp\rangle\, ds
				\\&-\,\sum_{k\in \ZZ} \int_0^t 
				\langle
				\sigma_{k,\epsilon} q(\tilde{u}_\epsilon), \partial_x \vp
				\rangle
				\, d\tilde{\beta}^{(k)}_{\epsilon,s}.
			\end{split}
		\end{align}
	\end{enumerate}
\end{cons}

	For technical reasons, we also demanded  existence of the random variable $\tilde{\xi}_\epsilon$, which can be obtained from \cite{dareiotis2021nonnegative} by including the random variable $\|u_0\|_{\MM(\TT)}$ in the application of the stochastic compactness method. This will be important later, since we define the decomposition \eqref{Eq98}  on the new probability space, which cannot be recovered from the regularized initial value $u_{0,\epsilon}$.
	
	To simplify the notation, we assume that all martingale solutions are defined on the original filtered probability space $(\Omega, \mathfrak{A}, \mathfrak{F}, \PP)$ with respect to the given family of Brownian motions $(\beta^{(k)})_{k\in \ZZ}$, attain the original initial value $u_{0,\epsilon}$, and denote them by $u_\epsilon$. Moreover, by equidistribution, we can assume that the auxiliary random variables $\tilde{\xi}_\epsilon$ are given by $ \|u_0\|_{\MM(\TT)}$. This simplification is possible, since the forthcoming estimates only depend on the distribution of the solutions.
	
	\subsection{Application of \^Ito's formula}
	In this section, we establish estimates on the dissipation terms of the $\alpha$-entropy 
	\begin{equation}\label{Eq207}
	\int_{\TT} G_\alpha (u_\epsilon)\,dx,
	\end{equation}
	which are uniform in $\epsilon$, where
	\begin{equation}\label{Eq28}
		G_\alpha(u)\,=\,\tfrac{1}{\alpha(\alpha+1)}u^{\alpha+1}
	\end{equation}
	and 	\begin{equation}\label{Condition_alpha_New}
		-1\, < \, \alpha \, < \, 2-n.
	\end{equation} 
	These estimates are expressed conditionally on $\mathfrak{F}_0$, where we remark that the conditional expectation  is also well-defined for non-negative random variables which do not lie in $L^1(\Omega)$, for details see \cite[Chapter 5]{kallenberg1997foundations}.
		\begin{prop}\label{Thm_Ito_s_formula_New}
		Assume \eqref{Condition_alpha_New}.	
		Then for every  $\epsilon\in (0,1)$ we have 
		\begin{align}\begin{split}\label{Eq13_New}&
			\EE\biggl[
		\int_0^{T}\int_{\TT} u_\epsilon^{\alpha+n-1}
		(\partial_x^2{u_\epsilon} )^2 \,dx\,ds\,+\, 
		\int_0^{T}\int_{\TT}
		u_\epsilon^{\alpha+n-3}(\partial_x u_\epsilon)^4  \,dx\,ds\,\bigg|\, \mathfrak{F}_0
		\biggr]\\&\quad 
		 \lesssim_{\alpha, n,\Lambda,T} \, \|u_0\|_{\MM(\TT)}^{\alpha+1} + \|u_0\|_{\MM(\TT)}^{\alpha+n-1}+\epsilon^{\alpha+1}.
	\end{split}
		\end{align}
	\end{prop}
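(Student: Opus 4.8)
The plan is to apply It\^o's formula to the $\alpha$-entropy $\Phi_\alpha(t):=\int_\TT G_\alpha(u_\epsilon(t))\,dx$ with $G_\alpha$ as in \eqref{Eq28}, in order to read off the deterministic thin-film dissipation, to exploit the cancellations inherent to the Stratonovich noise in the It\^o-correction terms, and to close the estimate using only conservation of mass. Since $G_\alpha'(u_\epsilon)=\tfrac1\alpha u_\epsilon^\alpha$ is neither smooth nor time-independent, it cannot be inserted into the weak formulation \eqref{Eq55_New} directly, and the It\^o formula for $\Phi_\alpha$ has to be derived as for the entropy estimate in \cite{dareiotis2021nonnegative}: one works with a $C^2$-truncation $G_\alpha^\delta$ of $G_\alpha$ having bounded second derivative, localises by stopping times, and passes to the limits $\delta\to0$ and along the localising sequence using the almost-everywhere positivity of $u_\epsilon$ from Consequence \ref{Consequences_Ass_1} \ref{Item_Consequences_4}, the bounds in Consequence \ref{Consequences_Ass_1} \ref{Item_Consequences_5}, and Fatou's lemma on the non-negative dissipation terms. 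This step, together with the integrations by parts involving negative powers of $u_\epsilon$ used below, is the only genuinely delicate point; the remaining computation is algebraic.

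\emph{Thin-film dissipation.} Integrating by parts twice in $\int_\TT G_\alpha'(u_\epsilon)\bigl(-\partial_x(u_\epsilon^n\partial_x^3u_\epsilon)\bigr)\,dx$ and using $(\partial_xu_\epsilon)^2\partial_x^2u_\epsilon=\tfrac13\partial_x\bigl((\partial_xu_\epsilon)^3\bigr)$ gives
\[
\int_\TT G_\alpha'(u_\epsilon)\bigl(-\partial_x(u_\epsilon^n\partial_x^3u_\epsilon)\bigr)\,dx=-\int_\TT u_\epsilon^{\alpha+n-1}(\partial_x^2u_\epsilon)^2\,dx+\tfrac{(\alpha+n-1)(\alpha+n-2)}{3}\int_\TT u_\epsilon^{\alpha+n-3}(\partial_xu_\epsilon)^4\,dx .
\]
By \eqref{Condition_alpha_New} and Consequence \ref{Consequences_Ass_1} \ref{Item_Consequences_1} we have $\alpha+n-1>0$ while $\alpha+n-2<0$, so the prefactor of the quartic term is a strictly negative constant depending only on $\alpha$ and $n$, and the right-hand side is bounded above by $-c_{\alpha,n}\bigl(\int u_\epsilon^{\alpha+n-1}(\partial_x^2u_\epsilon)^2+\int u_\epsilon^{\alpha+n-3}(\partial_xu_\epsilon)^4\bigr)$ for some $c_{\alpha,n}>0$, i.e.\ precisely the $\alpha$-entropy dissipation \eqref{Eq202} with $\theta=1$.

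\emph{Noise terms and Stratonovich cancellation.} The Stratonovich-to-It\^o drift contributes $-\tfrac12\sum_k\int G_\alpha''(u_\epsilon)\,\partial_xu_\epsilon\,\sigma_{k,\epsilon}q'(u_\epsilon)\partial_x(\sigma_{k,\epsilon}q(u_\epsilon))\,dx$ to $d\Phi_\alpha$ and the quadratic-variation correction contributes $\tfrac12\sum_k\int G_\alpha''(u_\epsilon)\bigl(\partial_x(\sigma_{k,\epsilon}q(u_\epsilon))\bigr)^2\,dx$. Because $\partial_x(\sigma_{k,\epsilon}q(u_\epsilon))-\sigma_{k,\epsilon}q'(u_\epsilon)\partial_xu_\epsilon=\sigma_{k,\epsilon}'q(u_\epsilon)$ carries no factor $\partial_xu_\epsilon$, their sum collapses to $\tfrac12\sum_k\int G_\alpha''(u_\epsilon)\sigma_{k,\epsilon}'q(u_\epsilon)\partial_x(\sigma_{k,\epsilon}q(u_\epsilon))\,dx$, and with $G_\alpha''(u)=u^{\alpha-1}$, $q(u)^2=u^n$, $q(u)q'(u)=\tfrac n2u^{n-1}$ this equals $\tfrac12\sum_k\int u_\epsilon^{\alpha+n-1}(\sigma_{k,\epsilon}')^2\,dx+\tfrac n4\sum_k\int u_\epsilon^{\alpha+n-2}\sigma_{k,\epsilon}\sigma_{k,\epsilon}'\partial_xu_\epsilon\,dx$. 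On the last term one integrates by parts via $u_\epsilon^{\alpha+n-2}\partial_xu_\epsilon=\tfrac1{\alpha+n-1}\partial_x(u_\epsilon^{\alpha+n-1})$ to move the derivative onto $\sigma_{k,\epsilon}\sigma_{k,\epsilon}'$; since $\sigma_{k,\epsilon}=\lambda_{k,\epsilon}f_k$ with $|f_k'|\le2\pi|k|$, $|f_k''|\le4\pi^2k^2$, the coefficients $\sum_k(\sigma_{k,\epsilon}')^2$ and $\sum_k|(\sigma_{k,\epsilon}\sigma_{k,\epsilon}')'|$ are bounded pointwise by $C\sum_k(k\lambda_k)^2$, which is finite by Assumption \ref{Condition_sigma} and, crucially, uniformly in $\epsilon$. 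Hence the entire It\^o-correction is bounded above by $C(\alpha,n,\Lambda)\int_\TT u_\epsilon^{\alpha+n-1}\,dx$.

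\emph{Collecting the terms.} The $d\beta^{(k)}$-integrand in the It\^o formula for $\Phi_\alpha$ is $\tfrac1{\alpha+n/2}\langle u_\epsilon^{\alpha+n/2},\sigma_{k,\epsilon}'\rangle$ (with $\alpha+n/2\in(0,1)$, in particular nonzero), whose $\ell^2$-norm in $k$ is $\lesssim_\Lambda\bigl(\int_\TT u_\epsilon\bigr)^{\alpha+n/2}$ by Jensen's inequality on $\TT$; testing \eqref{Eq55_New} with $\varphi\equiv1$ shows $\int_\TT u_\epsilon(t)\,dx=\int_\TT u_{0,\epsilon}\,dx$ for all $t$, which by \eqref{Eq99} and \eqref{Eq102} is $\le\|u_0\|_{\MM(\TT)}+\epsilon$, so the stochastic integral is a genuine martingale with vanishing conditional expectation given $\mathfrak F_0$ (the localisation is removed using conditional monotone convergence on the non-negative left-hand side and these uniform-in-time bounds). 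Taking $\EE[\,\cdot\,|\mathfrak F_0]$ in the It\^o identity, the term $\Phi_\alpha(0)=\tfrac{1}{\alpha(\alpha+1)}\int u_{0,\epsilon}^{\alpha+1}\,dx\le0$ (as $\alpha(\alpha+1)<0$) is discarded, while $-\EE[\Phi_\alpha(T)|\mathfrak F_0]=\tfrac1{|\alpha(\alpha+1)|}\EE[\int u_\epsilon(T)^{\alpha+1}\,dx|\mathfrak F_0]$; since $\alpha+1\in(0,1)$ and $\alpha+n-1\in(0,1)$, Jensen's inequality on $\TT$ together with mass conservation gives $\int u_\epsilon(T)^{\alpha+1}\,dx\le(\|u_0\|_{\MM(\TT)}+\epsilon)^{\alpha+1}$ and $\int_0^T\!\int_\TT u_\epsilon^{\alpha+n-1}\,dx\,dt\le T(\|u_0\|_{\MM(\TT)}+\epsilon)^{\alpha+n-1}$. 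Combining the three contributions, dividing by $\min\{1,c_{\alpha,n}\}$, using $(a+b)^p\le a^p+b^p$ for $p\in(0,1)$, $a,b\ge0$, and $\epsilon^{\alpha+n-1}\le\epsilon^{\alpha+1}$ for $\epsilon\in(0,1)$, yields \eqref{Eq13_New}. As noted above, the point that will require the most care is the rigorous derivation of the It\^o formula for the non-smooth, time-dependent functional $\Phi_\alpha$ and the integrations by parts with negative powers of $u_\epsilon$, given only the regularity in Consequence \ref{Consequences_Ass_1} \ref{Item_Consequences_5} and a.e.\ --- not uniform --- positivity of $u_\epsilon$; this is handled exactly as the entropy estimate in \cite{dareiotis2021nonnegative}.
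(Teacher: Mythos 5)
Your proposal follows essentially the same route as the paper's proof: It\^o's formula for a regularized $\alpha$-entropy, the Stratonovich cancellation that collapses the drift correction and the quadratic-variation term to $\tfrac12\sum_k\int G''(u_\epsilon)\,\partial_x\sigma_{k,\epsilon}\,q(u_\epsilon)\,\partial_x(\sigma_{k,\epsilon}q(u_\epsilon))\,dx$ (handled exactly as you describe, including the integration by parts via an antiderivative of $G''qq'$), the deterministic dissipation identity with strictly negative quartic prefactor for $\alpha\in(-1,2-n)$, and closure via mass conservation, Jensen and the martingale property conditioned on $\mathfrak{F}_0$. The only difference is the regularization device at the step you flag as delicate: rather than a $C^2$-truncation with stopping times as in \cite{dareiotis2021nonnegative}, the paper uses the shifted entropy $G_{\alpha,\delta}(u)=G_\alpha(u+\delta)$ from \cite{Passo98ona}, so that $G_{\alpha,\delta}''$ and $G_{\alpha,\delta}'$ are globally bounded (no localisation needed) and the remainders carry factors $\delta/(u_\epsilon+\delta)$ and $m(u_\epsilon)/m(u_\epsilon+\delta)-1$ that vanish as $\delta\searrow0$ by dominated convergence and the a.e.\ positivity of $u_\epsilon$.
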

\begin{proof}
	Throughout this proof we fix an $\epsilon\in (0,1)$ and introduce the functions
	\begin{align}&
		f\,=\, - \mathbbm{1}_{\{{u}_\epsilon> 0\}}m({u_\epsilon})\partial_x^3 {u_\epsilon},\\&
		g\,=\, \tfrac{1}{2}\sum_{k\in \ZZ} 
		\sigma_{k,\epsilon} q'({u}_\epsilon) \partial_x (\sigma_{k,\epsilon} q({u}_\epsilon)),
	\end{align}
	and	the $L_2(l^2(\ZZ), L^2(\TT))$-valued process $\Psi$, defined by
	\[
	\Psi e_k\,=\, 	\partial_x (\sigma_{k, \epsilon} q(u_\epsilon)),
	\]
	where $e_k$ denotes the $k$-th unit vector in $l^2(\ZZ)$. We convince ourselves that 
	\begin{equation}\label{Eq1_New}
		f,g \in L^2(\Omega , L^2(0,T; L^2(\TT)))
	\end{equation} and  
	\begin{equation}\label{Eq2_New}\Psi \in L^2(\Omega, L^2(0,T;L_2(l^2(\ZZ), L^2(\TT)) )).
	\end{equation}
	To this end, we observe that
	\begin{align}
		\EE \bigl[\|f\|_{L^2(0,T; L^2(\TT))}^2\bigr]\,&=\, \EE\biggl[ \int_0^{T} \int_{\TT}|
		\mathbbm{1}_{\{{u}_\epsilon> 0\}}m({u_\epsilon})\partial_x^3 {u_\epsilon}|^2
		\, dx\,dt\biggr]
		\\&
		\le \,\EE \biggl[
		\int_0^{T}  \|q(u_\epsilon)\|_{L^\infty(\TT)}^2\|
		\mathbbm{1}_{\{{u}_\epsilon> 0\}}q({u_\epsilon})\partial_x^3 {u_\epsilon}\|_{L^2(\TT)}^2\,dt\biggr]
		\\&
		\le \, \EE \Bigl[\sup_{0\le t\le T} \|u_\epsilon(t)\|_{H^1(\TT)}^{2n}\Bigr]^\frac{1}{2}\EE \bigl[\|
		\mathbbm{1}_{\{{u}_\epsilon> 0\}}q({u_\epsilon})\partial_x^3 {u_\epsilon}\|_{L^2([0,T]\times \TT)}^4\bigr]^\frac{1}{2} \,<\, \infty
	\end{align}
	by Consequence \ref{Consequences_Ass_1} \ref{Item_Consequences_5}.
	As a consequence of Assumption \ref{Condition_sigma}, we have 
	\begin{align}\begin{split}\label{Consequence_sigma_New}
			&
			\sum_{k\in \ZZ} \|\sigma_k\|_{C^1(\TT)}^2\, =\, 
			\sum_{k\in \ZZ} (\lambda_k(1+|k|))^2\,<\, \infty,\\&
			\sum_{k\in \ZZ} \|\sigma_k\|_{C^2(\TT)}\|\sigma_k\|_{C(\TT)}\, =\, 
			\sum_{k\in \ZZ} \lambda_k^2(1+|k|+|k|^2) \,<\, \infty
		\end{split}
	\end{align}
	and hence 
	\begin{align}
		\EE\bigl[\|g\|_{L^2(0,T; L^2(\TT))}^2\bigr]\,&\le\, \EE\biggl[\int_0^{T} \int_{\TT}\biggl|\sum_{k\in \ZZ} 
		\sigma_{k,\epsilon} q'({u}_\epsilon) \partial_x (\sigma_{k,\epsilon} q({u}_\epsilon)) \biggr|^2
		\, dx\,dt\biggr]
		\\&\lesssim \, 
		\EE\biggl[
		\int_0^{T} \biggl(\sum_{k\in \ZZ} 
		\|\sigma_{k,\epsilon}\|_{L^\infty(\TT)}^2 \biggr)^2 \int_{\TT} ((q'({u}_\epsilon))^2 \partial_x u_\epsilon)^2 \,dx\,dt \biggr]
		\\&\quad\,
		+\,  \EE\biggl[\int_0^{T} 	\biggl(\sum_{k\in \ZZ} 
		\|\sigma_{k,\epsilon}\|_{L^\infty(\TT)}
		\|\partial_x \sigma_{k,\epsilon}\|_{L^\infty(\TT)}  \biggr)^2 \int_{\TT}
		(q'(u_\epsilon)q(u_\epsilon))^2
		\, dx\,dt\biggr]
		\\&\lesssim_{ n, \Lambda}\,
		\EE\biggl[\int_0^{T} \int_{\TT}
		u_\epsilon^{2n-4} (\partial_xu_\epsilon)^2
		\,dx\,dt\,+\, 	\int_0^{T} \int_{\TT}
		u_\epsilon^{2n-2}
		\,dx\,dt\biggr]
		\\&\le \,
		\EE\biggl[
		\int_0^{T}
		\|u_\epsilon\|_{L^\infty(\TT)}^{2n-4} \|\partial_xu_\epsilon\|_{L^2(\TT)}^2
		\,dt\,+\, 	\int_0^{T}
		\|u_\epsilon\|_{L^\infty(\TT)}^{2n-2}\, dt\biggr] 
		\\& \lesssim_T\,
		\EE\Bigl[
		\sup_{0\le t\le T} 
		\|u_\epsilon(t)\|_{H^1(\TT)}^{2n-2}\Bigr]\,< \, \infty,
	\end{align}
	where we have used again Consequence \ref{Consequences_Ass_1} \ref{Item_Consequences_5}.
	By the same arguments, we also have 
	\begin{align}\begin{split}
			\label{Eq8_New}
			\EE\bigl[\|\Psi\|_{L^2(0,T; L_2(l^2(\ZZ) , L^2(\TT)))}^2\bigr]\,&=\,\EE\biggl[
			\int_0^{T}  
			\sum_{k\in \ZZ} 
			\|\partial_x (\sigma_{k,\epsilon} q(u_\epsilon))\|_{L^2(\TT)}^2 
			\, dt \biggr]\\&\lesssim\, \EE\biggl[	\int_0^{T}  
			\sum_{k\in \ZZ} 
			\int_{\TT} (\partial_x \sigma_{k,\epsilon})^2 m(u_\epsilon)\, dx 
			\, dt\biggr]\\&\quad\,+\, 	\EE\biggl[\int_0^{T}  
			\sum_{k\in \ZZ} 
			\int_{\TT} \sigma_{k,\epsilon}^2 (q'(u_\epsilon)\partial_x u_\epsilon)^2\, dx 
			\, dt\biggr]\\&\lesssim_n \, \EE\biggl[
			\int_0^{T}  
			\sum_{k\in \ZZ} \|\partial_x \sigma_{k,\epsilon}\|_{L^\infty(\TT)}^2 \|u_\epsilon\|_{L^\infty(\TT)}^{n}
			\, dt\biggr] \\&\quad \, +\,\EE\biggl[
			\int_0^{T}  
			\sum_{k\in \ZZ} \|\sigma_{k,\epsilon}\|_{L^\infty(\TT)}^2 \|u_\epsilon\|_{L^\infty(\TT)}^{n-2}\|\partial_x u_\epsilon\|_{L^2(\TT)}^2
			\, dt \biggr] \\&\lesssim_{\Lambda, T}\, \EE \Bigl[\sup_{0\le t\le T} \|u_\epsilon(t) \|_{H^1(\TT)}^n\Bigr] \,< \, \infty,
		\end{split}
	\end{align}
	so that \eqref{Eq1_New} and \eqref{Eq2_New}  indeed hold.	Denoting the cylindrical Wiener process $e_k \mapsto \beta^{(k)}$ in $l^2(\ZZ)$  by $\beta$, we see that 
	\begin{equation}\label{Eq4_New}
		u_\epsilon(t)\,=\, u_{0,\epsilon}\,+\, \int_0^t \partial_x ( f+g )\, ds\,+\, \int_0^t \Psi \, d\beta_s
	\end{equation}
suffices all the assumptions of It\^o's formula \cite[Proposition A.1]{DHV_16} on the involved processes by Consequence \ref{Consequences_Ass_1} \ref{Item_Consequences_5} and the integrability statements \eqref{Eq1_New} and \eqref{Eq2_New}. We would like to apply  \cite[Proposition A.1]{DHV_16} to calculate the It\^o expansion of the functional \eqref{Eq207}. Since, however, the function \eqref{Eq28} is not twice continuously differentiable, we instead use the shifted version of  \eqref{Eq28} as introduced in \cite[Proposition 2.2]{Passo98ona}. Specifically, we let $G_{\alpha,\delta}\in C^\infty(\RR)$ such that
		\begin{equation}\label{Eq17_New}
		G_{\alpha, \delta}(u)\,=\,\begin{cases}
			G_\alpha(u+\delta),& u\ge 0,\\
			0, &u\le -1,
		\end{cases}
	\end{equation}
	for $\delta\in (0,1)$. In particular, $G_{\alpha,\delta}$ has bounded second derivative and	hence evaluating It\^o's formula \cite[Proposition A.1]{DHV_16} at time $T$ yields 
	\begin{align}\begin{split}\label{Eq7_New}
			&
			\int_{\TT}
			G_{\alpha,\delta} (u_\epsilon (T ))\, dx \,-\, \int_{\TT}
			G_{\alpha,\delta} (u_{0,\epsilon})\, dx\\&\quad=\,  \int_0^{T}\int_{\{u_{\epsilon}>0\}}
			G_{\alpha,\delta}''(u_\epsilon)\partial_x u_\epsilon
			m({u}_\epsilon) \partial_x^3{u_\epsilon}  \,dx\,dt\\&\qquad\,-\,\tfrac{1}{2}\sum_{k\in \ZZ}
			\int_0^{T} \int_{\TT}
			G_{\alpha, \delta}''(u_\epsilon)
			\partial_x u_\epsilon 
			\sigma_{k,\epsilon} q'({u}_\epsilon) \partial_x (\sigma_{k,\epsilon} q({u}_\epsilon))\, dx\, dt
			\\& \qquad\,+\, 
			\int_0^{T}\langle
			G_{\alpha,\delta}'(u_\epsilon)\Psi\, d\beta_t
			,\mathbbm{1}_{\TT}
			\rangle 
			\,
			+\, \tfrac{1}{2}\sum_{k\in \ZZ} \int_0^{T} \int_{\TT}
			G_{\alpha,\delta}''(u_\epsilon) (\partial_x (\sigma_{k, \epsilon} q(u_\epsilon)))^2
			\,dx\,dt.
		\end{split}
	\end{align}
	To simplify the second and fourth term appearing on the right-hand side, we observe that
\begin{align}\begin{split}\label{Eq259}
		\mathcal{I}_{2,4}(k,t)\,&\coloneqq\,
	-\,\int_{\TT}
	G_{\alpha, \delta}''(u_\epsilon)
	\partial_x u_\epsilon 
	\sigma_{k,\epsilon} q'({u}_\epsilon) \partial_x (\sigma_{k,\epsilon} q({u}_\epsilon))\, dx
	\,+\,\int_{\TT}
	G_{\alpha,\delta}''(u_\epsilon) (\partial_x (\sigma_{k, \epsilon} q(u_\epsilon)))^2
	\,dx\\&=\,
	\int_{\TT}
	G_{\alpha, \delta}''(u_\epsilon)
	\partial_x\sigma_{k,\epsilon} q({u}_\epsilon) \partial_x (\sigma_{k,\epsilon} q({u}_\epsilon))\, dx
	\\&=\,
	\int_{\TT}
	G_{\alpha, \delta}''(u_\epsilon)m({u}_\epsilon)
	(\partial_x\sigma_{k,\epsilon})^2 \, dx\,+\,
	\int_{\TT}
	G_{\alpha, \delta}''(u_\epsilon)  q({u}_\epsilon) q'({u}_\epsilon)\partial_x u_\epsilon\sigma_{k,\epsilon}
	\partial_x\sigma_{k,\epsilon} \, dx.
	\end{split}
\end{align}
		We define 
\[
\zeta_{\alpha, \delta} (u)\,=\, \int_0^u G_{\alpha, \delta}''(x)  q(x) q'(x)\, dx\,=\, \frac{n}{2}
\int_0^u (x+\delta)^{\alpha-1}  x^{n-1}\, dx
\]
for $u\ge 0$,
so that
\begin{equation}\label{Eq256}
\int_{\TT}
G_{\alpha, \delta}''(u_\epsilon)  q({u}_\epsilon) q'({u}_\epsilon)\partial_x u_\epsilon\sigma_{k,\epsilon}
\partial_x\sigma_{k,\epsilon} \, dx\,=\, -\,
\int_{\TT}
\zeta_{\alpha, \delta}(u_\epsilon) \partial_x(\sigma_{k,\epsilon}
\partial_x\sigma_{k,\epsilon}) \, dx.
\end{equation}
By the definition of $\zeta_{\alpha, \delta}$  we have 
\[
\zeta_{\alpha, \delta} (u)\,\le \, 
\frac{n}{2}
\int_0^u   (x+\delta)^{\alpha+n-2}\, dx,
\] and consequently
\begin{align}\label{Eq257}
	|\zeta_{\alpha, \delta}(u)|\,\lesssim_{\alpha,n} \, (u+\delta)^{\alpha+n-1}.\end{align}
Moreover, it holds
\begin{align}\label{Eq258}
	|G_{\alpha, \delta}''(u)m(u)|\,\le\,
	(u+\delta)^{\alpha+n-1}
\end{align}
for $u\ge 0$ by \eqref{Eq17_New} and \begin{align}\label{Eq25_New}
	\|u_\epsilon(t)\|_{L^1(\TT)}\,=\, \|u_{0,\epsilon}\|_{L^1(\TT)}\, \le \, \|u_0\|_{\MM(\TT)}\|\eta_\epsilon\|_{L^1(\TT)}+ \epsilon\,=\, 
	\|u_0\|_{\MM(\TT)}+\epsilon
\end{align}
by  the divergence form of \eqref{Eq55_New}, non-negativity of $u_\epsilon$ and \eqref{Eq102}.
Inserting \eqref{Eq256}, \eqref{Eq257} and \eqref{Eq258} in \eqref{Eq259}
and using  \eqref{Condition_alpha_New}, \eqref{Consequence_sigma_New}
and 
\eqref{Eq25_New}, we conclude that 
\begin{align}\begin{split}\label{Eq11_New}&
		\biggl|\tfrac{1}{2}\sum_{k\in \ZZ}	\int_0^{T}\mathcal{I}_{2,4}(k,t)
		\,dt\biggr|\\&\quad 
		\lesssim_{\alpha, n}\, \sum_{k\in \ZZ}
		\int_0^T\int_{\TT} ( u_\epsilon+\delta)^{\alpha+n-1}( (\partial_x \sigma_{k,\epsilon} )^2 +|\partial_x (\sigma_{k,\epsilon} \partial_x\sigma_{k,\epsilon} )|  )\, dx\, dt
		\\&\quad
		\lesssim_{\Lambda, T}\,\Bigl(\sup_{0\le t\le T} \|u_\epsilon\|_{L^1(\TT)}\,+\,  \delta\Bigr)^{\alpha+n-1}\,\le \, \bigl( \|u_0\|_{\MM(\TT)}+\epsilon+\delta\bigr)^{\alpha+n-1}.
	\end{split}
\end{align}
Next, we estimate the first term on the right-hand side of \eqref{Eq7_New}. The calculations are the same as in the proof of \cite[Proposition 2.2]{Passo98ona} and only contained for convenience of the reader. Namely, using integration by parts we can rewrite 
\begin{align}\begin{split}\label{Eq221}
		 &\int_{\{u_{\epsilon}>0\}}
	G_{\alpha,\delta}''(u_\epsilon)\partial_x u_\epsilon
	m({u}_\epsilon) \partial_x^3{u_\epsilon}  \,dx
	\\&\quad=\, \int_{\{u_{\epsilon}>0\}}
	(u_\epsilon+\delta)^{\alpha+n-1} \Bigl(\tfrac{m(u_\epsilon)}{m(u_\epsilon+\delta)}\Bigr)\partial_x u_\epsilon \partial_x^3{u_\epsilon}  \,dx
	\\&\quad = \,-\, \int_{\TT} (u_\epsilon+\delta)^{\alpha-1} m(u_\epsilon) (\partial_x^2 u_\epsilon)^2\, dx
	\\&
	\qquad \, -\, (\alpha+n-1) \int_{\TT} \Bigl(\tfrac{m(u_\epsilon)}{m(u_\epsilon+\delta)}\Bigr)(u_\epsilon+\delta)^{\alpha+n-2} (\partial_x u_\epsilon)^2 \partial_x^2 u_\epsilon\, dx
	\\&\qquad \, -\, \int_{\TT} \Bigl(
	\tfrac{m'(u_\epsilon)}{m(u_\epsilon+\delta)}  - \tfrac{m(u_\epsilon)m'(u_\epsilon+\delta)}{(m(u_\epsilon+\delta))^2}
	\Bigr)(u_\epsilon+\delta)^{\alpha+n-1}(\partial_x u_\epsilon)^2 \partial_x^2 u_\epsilon\, dx
	\\&\quad = \, \mathcal{J}_1\,+\, \mathcal{J}_2\,+\, \mathcal{R}_1.
	\end{split}
\end{align}
$\PP\otimes dt$-almost everywhere. 
Integrating by parts and employing Young's inequality, we retrieve 
\begin{align}\begin{split}
		\label{Eq220}
		\mathcal{J}_2\,&=\, -\, (\alpha+n-1) \int_{\TT}(u_\epsilon+\delta)^{\alpha+n-2} (\partial_x u_\epsilon)^2 \partial_x^2 u_\epsilon\, dx \\&\quad \,-\, (\alpha+n-1) \int_{\TT} (u_\epsilon+\delta)^\frac{\alpha+n-3}{2} (\partial_x u_\epsilon)^2 \Bigl(\tfrac{m(u_\epsilon)}{m(u_\epsilon+\delta)}-1\Bigr)  (u_\epsilon+\delta)^\frac{\alpha+n-1}{2}\partial_x^2 u_\epsilon\, dx
		\\&\le \, \Bigl(\tfrac{(\alpha+n-1)(\alpha+n-2)}{3}+\kappa_1\Bigr)\int_{\TT}
		(u_\epsilon+\delta)^{\alpha+n-3}(\partial_x u_\epsilon)^4
		\, dx
		\\&\quad \,+\,\tfrac{(\alpha+n-1)^2}{\kappa_1}\int_{\TT} \Bigl(\tfrac{m(u_\epsilon)}{m(u_\epsilon+\delta)}-1\Bigr)^2 (u_\epsilon+\delta)^{\alpha+n-1}(\partial_x^2 u_\epsilon)^2\, dx
	\end{split}
\end{align}
for every $\kappa_1 >0$.
Moreover, by inserting $m(u)=u^n$, we obtain the estimate
\begin{equation}
	\Bigl|
	\tfrac{m'(u)}{m(u+\delta)}  - \tfrac{m(u)m'(u+\delta)}{(m(u+\delta))^2}
	\Bigr|\,=\, n\bigl(\tfrac{u}{u+\delta}\bigr)^{n-1} \tfrac{ \delta}{(u+\delta)^2}\,\le  \, \tfrac{ n \delta}{(u+\delta)^2}
\end{equation}
for $u\ge 0$. Hence for each $\kappa_2>0$, we have 
\begin{align}\begin{split}
		\label{Eq219}
	| \mathcal{R}_1|\,&\le\, \int_{\TT}(u_\epsilon+\delta)^\frac{\alpha+n-3}{2}(\partial_x u_\epsilon)^2 \bigl(\tfrac{ n \delta}{u_\epsilon+\delta}\bigr)(u_\epsilon+\delta)^\frac{\alpha+n-1}{2}  |\partial_x^2 u_\epsilon|\, dx
	\\&\le \,
	\kappa_2\int_{\TT} (u_\epsilon+\delta)^{\alpha+n-3}(\partial_xu_\epsilon)^4\, dx \,+\, 
	\tfrac{1}{\kappa_2}\int_{\TT} \bigl(\tfrac{ n \delta}{u_\epsilon+\delta}\bigr)^2(u_\epsilon+\delta)^{\alpha+n-1} (\partial_{x}^2u_\epsilon)^2\, dx	,
	\end{split}
\end{align}
again by Young's inequality.  Choosing 
\[
\kappa_1\,=\, \kappa_2 \,=\, \tfrac{-(\alpha+n-1)(\alpha+n-2)}{12},
\]
which is positive by \eqref{Condition_alpha_New},
and inserting \eqref{Eq219} and \eqref{Eq220} in \eqref{Eq221} yields
\begin{align}
	 &\int_{\{u_{\epsilon}>0\}}
	G_{\alpha,\delta}''(u_\epsilon)\partial_x u_\epsilon
	m({u}_\epsilon) \partial_x^3{u_\epsilon}  \,dx\,dt
	\\&\quad\le \, -\, \int_{\TT} (u_\epsilon+\delta)^{\alpha-1} m(u_\epsilon) (\partial_x^2 u_\epsilon)^2\, dx
	\\&\qquad \,+\,
	\tfrac{(\alpha+n-1)(\alpha+n-2)}{6}\int_{\TT}
	(u_\epsilon+\delta)^{\alpha+n-3}(\partial_x u_\epsilon)^4
	\, dx
	\\&\qquad \,-\,\tfrac{12(\alpha+n-1)}{\alpha+n-2}\int_{\TT} \Bigl(\tfrac{m(u_\epsilon)}{m(u_\epsilon+\delta)}-1\Bigr)^2 (u_\epsilon+\delta)^{\alpha+n-1}(\partial_x^2 u_\epsilon)^2\, dx
	\\&\qquad \,-\,\tfrac{12n^2}{(\alpha+n-1)(\alpha+n-2)}\int_{\TT}
	\bigl(\tfrac{ \delta}{u_\epsilon+\delta}\bigr)^2(u_\epsilon+\delta)^{\alpha+n-1} (\partial_{x}^2u_\epsilon)^2\, dx.
\end{align}
By using the preceding estimate and \eqref{Eq11_New} in \eqref{Eq7_New}, we obtain moreover
\begin{align}\begin{split}\label{Eq218_New}
			&
			\int_{\TT}
			G_{\alpha,\delta} (u_\epsilon (T ))\, dx \,-\, \int_{\TT}
			G_{\alpha,\delta} (u_{0,\epsilon})\, dx \,+\, 
			\int_0^{T}\int_{\TT} (u_\epsilon+\delta)^{\alpha-1} m(u_\epsilon) (\partial_x^2 u_\epsilon)^2\, dx\,dt \\&-\,\tfrac{	(\alpha+n-1)(\alpha+n-2)}{6}
			\int_0^{T}\int_{\TT} (u_\epsilon+\delta)^{\alpha+n-3}(\partial_x u_\epsilon)^4  \,dx\,dt\,-\, 
			\int_0^{T}\langle
			G_{\alpha,\delta}'(u_\epsilon)\Psi\, d\beta_t
			,\mathbbm{1}_{\TT}
			\rangle 
			\\&\qquad\lesssim_{\alpha, n,\Lambda,T} \,\bigl( \|u_0\|_{\MM(\TT)}+\epsilon+\delta\bigr)^{\alpha+n-1} \\&\qquad\quad\,+\,\biggl| \int_0^T\int_{\TT} \Bigl(\tfrac{m(u_\epsilon)}{m(u_\epsilon+\delta)}-1\Bigr)^2 (u_\epsilon+\delta)^{\alpha+n-1}(\partial_x^2 u_\epsilon)^2\, dx\,dt\biggr|
			\\&\qquad\quad\,
			+\,	 \biggl|
			\int_0^T \int_{\TT}
			\bigl(\tfrac{ \delta}{u_\epsilon+\delta}\bigr)^2(u_\epsilon+\delta)^{\alpha+n-1} (\partial_{x}^2u_\epsilon)^2\, dx\,dt\biggr|.
		\end{split}
\end{align}
We have, by definition \eqref{Eq17_New},
\begin{equation}\label{Eq216_New}-\, \int_{\TT}
	G_{\alpha,\delta} (u_{0,\epsilon})\, dx\,\ge\, 0.\end{equation} 
Moreover, it holds $
	|G_{\alpha,\delta} (u)|  \,\lesssim_\alpha\,
	(u+\delta)^{\alpha+1}$
for $u\ge 0$ by \eqref{Condition_alpha_New} and thus 
\begin{align}\label{Eq217_New}
	\biggl|\int_{\TT}
	G_{\alpha,\delta} (u_{\epsilon}(T))\, dx\biggr|\,\lesssim_\alpha\, 
	\bigl( \|u_\epsilon(T)\|_{L^1(\TT)}\,+\,\delta\bigr)^{\alpha+1}
	 \,\le \,
	 \bigl( \|u_0\|_{\MM(\TT)}+\epsilon+\delta\bigr)^{\alpha+1},
\end{align}
by further invoking \eqref{Eq25_New}. Inserting \eqref{Eq216_New} and \eqref{Eq217_New} in \eqref{Eq218_New} yields
\begin{align}\begin{split}\label{Eq12_New}
		&\int_0^{T}\int_{\TT} (u_\epsilon+\delta)^{\alpha-1} m(u_\epsilon) (\partial_x^2 u_\epsilon)^2\, dx\,dt\\&-\,\tfrac{	(\alpha+n-1)(\alpha+n-2)}{6}
		\int_0^{T}\int_{\TT} (u_\epsilon+\delta)^{\alpha+n-3}(\partial_x u_\epsilon)^4  \,dx\,dt\,-\, 
		\int_0^{T}\langle
		G_{\alpha,\delta}'(u_\epsilon)\Psi_l \,d\beta_t
		,\mathbbm{1}_{\TT}
		\rangle 
		\\&\qquad\lesssim_{\alpha, n,\Lambda,T} \,\bigl( \|u_0\|_{\MM(\TT)}+\epsilon+\delta\bigr)^{\alpha+1}\,+\,\bigl( \|u_0\|_{\MM(\TT)}+\epsilon+\delta\bigr)^{\alpha+n-1} \\&\qquad\quad\,+\,\biggl| \int_0^T\int_{\TT} \Bigl(\tfrac{m(u_\epsilon)}{m(u_\epsilon+\delta)}-1\Bigr)^2 (u_\epsilon+\delta)^{\alpha+n-1}(\partial_x^2 u_\epsilon)^2\, dx\,dt\biggr|\\&\qquad\quad\,+\, 
		 \biggl|
		\int_0^T \int_{\TT}
		\bigl(\tfrac{ \delta}{u_\epsilon+\delta}\bigr)^2(u_\epsilon+\delta)^{\alpha+n-1} (\partial_{x}^2u_\epsilon)^2\, dx\,dt\biggr|.
	\end{split}
\end{align}
We point out that the prefactor
\[\tfrac{	(\alpha+n-1)(\alpha+n-2)}{6}
\] 
of the second term on the left-hand side of \eqref{Eq12_New}
is negative by \eqref{Condition_alpha_New}. By \eqref{Condition_alpha_New} and \eqref{Eq17_New} the function $G_{\alpha, \delta}'$ is bounded, such that
\begin{align}
	\EE\bigl[\|
	\langle
	G_{\alpha,\delta}'(u_\epsilon)\Psi \cdot 
	,\mathbbm{1}_{\TT}
	\rangle  
	\|_{L^2(0,T; L_2(l^2(\ZZ), \RR))}^2\bigr]\,&=\, \EE\biggl[
	\int_0^{T}  
	\sum_{k\in \ZZ} \biggl(\int_{\TT} G_{\alpha,\delta}'(u_\epsilon)
	\partial_x (\sigma_{k,\epsilon} q(u_\epsilon))\, dx\biggr)^2
	\, dt\biggr]\\&\lesssim_{\alpha,\delta}\,\EE\biggl[
	\int_0^{T}  
	\sum_{k\in \ZZ}\left\|
	\partial_x (\sigma_{k,\epsilon} q(u_\epsilon))\, \right\|_{L^2(\TT)}^2
	\, dt\biggr]
	,
\end{align}
which is finite by \eqref{Eq8_New}
and hence the stochastic integral in \eqref{Eq12_New} is a square integrable martingale. In order to estimate the conditional expectation on the left-hand side of \eqref{Eq13_New}, we let $A\in \mathfrak{F}_0$. Multiplying both sides in \eqref{Eq12_New} with $\mathbbm{1}_{A}$ and taking the expectation, we conclude that
\begin{align}\begin{split}\label{Eq222}
		&
	\EE\biggl[\mathbbm{1}_{A}\int_0^{T}\int_{\TT} (u_\epsilon+\delta)^{\alpha-1} m(u_\epsilon) (\partial_x^2 u_\epsilon)^2\, dx\,dt 	\biggr]
	\,
	+\, 	\EE\biggl[\mathbbm{1}_{A}\int_0^{T}\int_{\TT} (u_\epsilon+\delta)^{\alpha+n-3}(\partial_x u_\epsilon)^4  \,dx\,dt
	\biggr]
	\\&\qquad\lesssim_{\alpha, n,\Lambda,T} \, \EE\bigl[\mathbbm{1}_{A} \bigl(\bigl( \|u_0\|_{\MM(\TT)}+\epsilon+\delta\bigr)^{\alpha+1}\,+\,\bigl( \|u_0\|_{\MM(\TT)}+\epsilon+\delta\bigr)^{\alpha+n-1}\bigr)\bigr]\\&\qquad\quad \,+\,
	\EE\biggl[\biggl| \int_0^T\int_{\TT} \Bigl(\tfrac{m(u_\epsilon)}{m(u_\epsilon+\delta)}-1\Bigr)^2 (u_\epsilon+\delta)^{\alpha+n-1}(\partial_x^2 u_\epsilon)^2\, dx\,dt\biggr|\biggr]\\&\qquad\quad \, +\,
	\EE\biggl[
	\biggl|
	\int_0^T \int_{\TT}
	\bigl(\tfrac{ \delta}{u_\epsilon+\delta}\bigr)^2(u_\epsilon+\delta)^{\alpha+n-1} (\partial_{x}^2u_\epsilon)^2\, dx\,dt\biggr|
	\biggr] \,=\, \mathcal{E}_1 \,+\, \mathcal{R}_2 \,+\, \mathcal{R}_3.
	\end{split}
\end{align}
Due to Fatou's lemma and Consequence \ref{Consequences_Ass_1} \ref{Item_Consequences_4}, we can deduce that
		\begin{align}\begin{split}\label{Eq234}
						&
						\EE\biggl[\mathbbm{1}_{A}
						\int_0^{T}\int_{\TT} u_\epsilon^{\alpha+n-1}
						(\partial_x^2{u_\epsilon} )^2 \,dx\,ds	\biggr]\,
						+\, 	\EE\biggl[\mathbbm{1}_{A}
						\int_0^{T}\int_{\TT}
						u_\epsilon^{\alpha+n-3}(\partial_x u_\epsilon)^4  \,dx\,ds
						\biggr]
						\\&\quad \lesssim_{\alpha, n,\Lambda,T} \,\EE\bigl[\mathbbm{1}_{A} \bigl(\|u_0\|_{\MM(\TT)}^{\alpha+1} + \|u_0\|_{\MM(\TT)}^{\alpha+n-1}+\epsilon^{\alpha+1}\bigr)\bigr]
					\end{split}
			\end{align} by letting $\delta\searrow 0$ in \eqref{Eq222}, if we can argue that $ \mathcal{R}_2 + \mathcal{R}_3\to 0$ as $\delta\searrow 0$. To this end, we observe first that
\begin{align}& (u_\epsilon+\delta)^{\alpha+n-1}(\partial_x^2u)^2\,\le \, 
	(1+|u_\epsilon|)(\partial_x^2u)^2,
	\\&
	\Bigl(\tfrac{m(u_\epsilon)}{m(u_\epsilon+\delta)}-1\Bigr)^2\,\le \, 1,
	\\&
	\bigl(\tfrac{ \delta}{u_\epsilon+\delta}\bigr)^2\, \le \, 1,
\end{align}
by \eqref{Condition_alpha_New}. Moreover, 
\begin{align}&	\Bigl(\tfrac{m(u_\epsilon)}{m(u_\epsilon+\delta)}-1\Bigr)^2\,\to \, 0,
	\\&
	\bigl(\tfrac{ \delta}{u_\epsilon+\delta}\bigr)^2\,\to \, 0,
\end{align}
$ \PP\otimes dt \otimes dx$-almost everywhere as $\delta\searrow 0$, by Conseuqence \ref{Consequences_Ass_1} \ref{Item_Consequences_4}. Due to Consequence \ref{Consequences_Ass_1} \ref{Item_Consequences_5},
we have that
\begin{align}
	\EE\biggl[
	\int_0^T\int_{\TT} 	(1+|u_\epsilon|)(\partial_x^2u)^2
	\,dx\,dt
	\biggr]\,&\le \, \EE\biggl[
	\int_0^T	(1+\|u_\epsilon\|_{H^1(\TT)})\|\partial_x^2u\|_{L^2(\TT)}^2\, dt\biggr]\\&\le \, \EE \Bigl[\Bigl(1+ \sup_{0\le t\le T} \|u_\epsilon(t) \|_{H^1(\TT)}\Bigr) \|u_\epsilon\|_{L^2(0,T;H^2(\TT))}^2 \Bigr]
	\\&
	\le \, \Bigl(1\,+\, \EE\Bigl[
	\sup_{0\le t\le T} \|u_\epsilon(t) \|_{H^1(\TT)}^2
	\Bigr]^\frac{1}{2}\Bigr) \EE\bigl[
	\|u_\epsilon\|_{L^2(0,T;H^2(\TT))}^4
	\bigr]^\frac{1}{2}
	,
\end{align}
such that $ \mathcal{R}_2 + \mathcal{R}_3\to 0$ by the dominated convergence theorem. Consequently, \eqref{Eq234} holds true, and \eqref{Eq13_New} follows, since $A\in \mathfrak{F}_0$ was arbitrary.
\end{proof}
	\subsection{Spatial Regularity}\label{SS_spatial}
	In this section, we proceed as explained in Section \ref{SubSec_Proof} and use the Gagliardo–Nirenberg interpolation inequality in conjunction with the $\alpha$-entropy estimate \eqref{Eq13_New} and conservation of mass \eqref{Eq25_New} to obtain estimates on $u_\epsilon$ in suitable Lebesgue and Sobolev norms.
	\begin{lemma}\label{lemma_int_p}
		Let $p\in (n+4,7)$, then
		\begin{equation}\label{Eq38_New}
			\EE\bigl[\|u_\epsilon\|_{L^p([0,T]\times\TT)}^p\,\big|\, \mathfrak{F}_0 \bigr]\,
			\lesssim_{ n,p,\Lambda, T}\,
			(\|u_0\|_{\MM(\TT)}+\epsilon)^4  \bigl(\|u_0\|_{\MM(\TT)}^{p-n-4}+ \|u_0\|_{\MM(\TT)}^{p-4}+\epsilon^{p-n-4}\bigr).
		\end{equation}
	\end{lemma}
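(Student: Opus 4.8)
The plan is to carry out the interpolation argument sketched in Section~\ref{SubSec_Proof} at the level of the approximations~$u_\epsilon$. Given $p\in(n+4,7)$, set $\alpha:=p-n-5$; since $p\in(n+4,7)$ this lies in $(-1,2-n)$, so \eqref{Condition_alpha_New} holds and Proposition~\ref{Thm_Ito_s_formula_New} is available. Introduce $w_\epsilon:=u_\epsilon^{\frac{\alpha+n+1}{4}}=u_\epsilon^{\frac{p-4}{4}}$ and $r_0:=\tfrac{4}{\alpha+n+1}=\tfrac{4}{p-4}$. The two inputs are then: by the chain rule $\partial_x w_\epsilon=\tfrac{\alpha+n+1}{4}u_\epsilon^{\frac{\alpha+n-3}{4}}\partial_x u_\epsilon$, so \eqref{Eq13_New} gives
\[
\EE\Bigl[\int_0^T\|\partial_x w_\epsilon(t)\|_{L^4(\TT)}^4\,dt\,\Big|\,\mathfrak{F}_0\Bigr]\,\lesssim_{n,p}\,\EE\Bigl[\int_0^T\!\!\int_\TT u_\epsilon^{\alpha+n-3}(\partial_x u_\epsilon)^4\,dx\,dt\,\Big|\,\mathfrak{F}_0\Bigr]\,\lesssim_{n,p,\Lambda,T}\,\|u_0\|_{\MM(\TT)}^{\alpha+1}+\|u_0\|_{\MM(\TT)}^{\alpha+n-1}+\epsilon^{\alpha+1},
\]
while conservation of mass \eqref{Eq25_New} gives $\sup_{0\le t\le T}\|w_\epsilon(t)\|_{L^{r_0}(\TT)}^{r_0}=\sup_{0\le t\le T}\|u_\epsilon(t)\|_{L^1(\TT)}\le\|u_0\|_{\MM(\TT)}+\epsilon$.

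Next I would interpolate. With $q:=\tfrac{4p}{\alpha+n+1}=\tfrac{4p}{p-4}$ and $\vartheta:=\tfrac{p-4}{p}\in(0,1)$ one checks the Gagliardo--Nirenberg scaling identity $\tfrac1q=\vartheta\bigl(\tfrac14-1\bigr)+(1-\vartheta)\tfrac1{r_0}$ and, crucially, $\vartheta q=4$; the additive form of the Gagliardo--Nirenberg inequality on $\TT$ (with the low exponent $r_0$ in the additive term, so that only $\|\partial_x w_\epsilon\|_{L^4}$ and $\|w_\epsilon\|_{L^{r_0}}$ enter, and no $\sup_t\|u_\epsilon\|_{H^1}$) then yields, pointwise in $(\omega,t)$,
\[
\|w_\epsilon(t)\|_{L^q(\TT)}^q\,\lesssim_{n,p}\,\|\partial_x w_\epsilon(t)\|_{L^4(\TT)}^{4}\,\|w_\epsilon(t)\|_{L^{r_0}(\TT)}^{(1-\vartheta)q}+\|w_\epsilon(t)\|_{L^{r_0}(\TT)}^{q}.
\]
Since $\|u_\epsilon\|_{L^p([0,T]\times\TT)}^p=\int_0^T\!\int_\TT u_\epsilon^p\,dx\,dt=\int_0^T\|w_\epsilon(t)\|_{L^q(\TT)}^q\,dt$ by the very choice of $q$, integrating in time, pulling out $\sup_t\|w_\epsilon(t)\|_{L^{r_0}(\TT)}$, and using $(1-\vartheta)q/r_0=4$ and $q/r_0=p$ together with the mass bound gives
\[
\|u_\epsilon\|_{L^p([0,T]\times\TT)}^p\,\lesssim_{n,p}\,(\|u_0\|_{\MM(\TT)}+\epsilon)^4\int_0^T\|\partial_x w_\epsilon(t)\|_{L^4(\TT)}^4\,dt\,+\,T(\|u_0\|_{\MM(\TT)}+\epsilon)^p.
\]

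Finally I would apply $\EE[\,\cdot\mid\mathfrak{F}_0]$: the factors $(\|u_0\|_{\MM(\TT)}+\epsilon)^4$ and $(\|u_0\|_{\MM(\TT)}+\epsilon)^p$ are $\mathfrak{F}_0$-measurable and can be pulled out, so the first display applies; substituting $\alpha+1=p-n-4$ and $\alpha+n-1=p-4$ and absorbing $T(\|u_0\|_{\MM(\TT)}+\epsilon)^p\lesssim_p(\|u_0\|_{\MM(\TT)}+\epsilon)^4\bigl(\|u_0\|_{\MM(\TT)}^{p-4}+\epsilon^{p-n-4}\bigr)$ (using $\epsilon<1$ and $p-4\ge p-n-4>0$) gives exactly \eqref{Eq38_New}. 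The argument is mostly bookkeeping; the two points requiring care are (i) arranging $\alpha$ and the interpolation exponent so that the space- and time-integrabilities coincide, which is precisely what forces $\alpha=p-n-5$ and $\vartheta q=4$, and (ii) justifying the chain rule for $w_\epsilon=u_\epsilon^{(p-4)/4}$ even though the exponent is below $1$ and $u_\epsilon$ may vanish --- here one uses that, for a.e.\ $(\omega,t)$, $u_\epsilon(t)\in H^2(\TT)\hookrightarrow C^1(\TT)$ is non-negative, so $\partial_x u_\epsilon=0$ a.e.\ on $\{u_\epsilon=0\}$ and $w_\epsilon(t)\in W^{1,4}(\TT)$ with the stated derivative, or alternatively runs the whole computation with $u_\epsilon+\delta$ in place of $u_\epsilon$ and lets $\delta\searrow0$ as in the proof of Proposition~\ref{Thm_Ito_s_formula_New}.
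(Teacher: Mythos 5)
Your proof is correct and follows essentially the same route as the paper's: the same choice $\alpha=p-n-5$, the same substitution $w_\epsilon=u_\epsilon^{(\alpha+n+1)/4}$, the same Gagliardo--Nirenberg interpolation with $\vartheta q=4$ between the dissipation bound \eqref{Eq13_New} and the mass bound \eqref{Eq25_New}, and the same conditional-expectation bookkeeping; your $\delta$-shift justification of the chain rule is also precisely what the paper does. The only flaw is the arithmetic slip $\alpha+n-1=p-4$ (it equals $p-6$), which is harmless because $p-n-4\le p-6\le p-4$, so the resulting term $\|u_0\|_{\MM(\TT)}^{p-6}$ is absorbed into $\|u_0\|_{\MM(\TT)}^{p-n-4}+\|u_0\|_{\MM(\TT)}^{p-4}$.
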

	\begin{proof}We choose $\alpha$ in accordance  with 
		\begin{equation}\label{Eq223}p\,=\,\alpha+n+5
		\end{equation} so that in particular \eqref{Condition_alpha_New} is satisfied and thus Proposition \ref{Thm_Ito_s_formula_New} applies. We define the random function $w_\epsilon\,=\, u_\epsilon^\frac{\alpha+n+1}{4} $
		and claim that $w_\epsilon\in W^{1,4}(\TT)$, $\PP\otimes dt$-almost everywhere, and that the chain rule holds for it. To verify this, we observe that $w_{\epsilon, \kappa}\,=\, (u_\epsilon+\kappa)^\frac{\alpha+n+1}{4} $ has the weak derivative $\frac{\alpha+n+1}{4}(u_\epsilon+\kappa)^\frac{\alpha+n-3}{4} \partial_x u_\epsilon $ by the chain rule \cite[Corollary 8.11]{brezis2010functional} for each $\kappa >0$. Hence,
		\begin{equation}\label{Eq103}
			\bigl(\tfrac{4}{\alpha+n+1}\bigr)^4\|\partial_x w_{\epsilon,\kappa}\|_{L^4(\TT)}^4\,=\, \int_{\TT} (u_\epsilon+\kappa)^{\alpha+n-3}
			(\partial_x u_\epsilon)^4
			\, dx \,\le \, \int_{\TT} u_\epsilon^{\alpha+n-3}
			(\partial_x u_\epsilon)^4
			\, dx,
		\end{equation}
		which is $\PP\otimes dt$-almost everywhere finite by \eqref{Eq13_New} so that taking $\kappa\searrow 0$, $w_{\epsilon,\kappa}$ admits a subsequence converging weakly in $W^{1,4}(\TT)$. This limit coincides with $w_\epsilon$, since $w_{\epsilon, \kappa}\to w_\epsilon$ almost everywhere. Moreover, using the weak convergence $\partial_x w_{\epsilon,\kappa}\rightharpoonup \partial_x w_\epsilon$ in $L^4(\TT)$ and the dominated convergence theorem, we conclude that 
		\begin{equation}\label{Eq34}
			\langle \partial_x w_\epsilon, \vp\rangle \,\leftarrow \, 
			\langle \partial_x w_{\epsilon,\kappa}, \vp\rangle\,=\, 
			\tfrac{\alpha+n+1}{4}\bigl\langle (u_\epsilon+\kappa)^\frac{\alpha+n-3}{4} \partial_x u_\epsilon, \vp\bigr\rangle\,\to \, 
			\tfrac{\alpha+n+1}{4}\bigl\langle u_\epsilon^\frac{\alpha+n-3}{4} \partial_x u_\epsilon, \vp\bigr\rangle,
		\end{equation}
	for a subsequence $\kappa \searrow 0$ and every $\vp\in C^\infty(\TT)$, and therefore the chain rule applies to $w_\epsilon$, too. 
		
	By the Gagliardo-Nirenberg interpolation inequality \cite[Eq. (42), p.233]{brezis2010functional} it holds
		\begin{equation}\label{Eq30}
			\| w_\epsilon\|_{L^r(\TT)}\,\lesssim_{\alpha,n} \, \| w_\epsilon\|_{W^{1,4}(\TT)}^\nu  \| w_\epsilon\|_{L^\frac{4}{\alpha+n+1}(\TT)}^{1-\nu} 
		\end{equation}
		for
		\[
		r\,=\, \tfrac{4(\alpha+n+4)+4}{\alpha+n+1},\;\;
		\nu \,=\,\tfrac{\alpha+n+1 -\frac{4}{r}}{\alpha+n+4}.
		\]
		Moreover, by the Poincar\'e-Wirtinger inequality
		\[
		\biggl\| w_\epsilon-\int_{\TT}w_\epsilon\, dx\biggr\|_{L^4(\TT)}\,\lesssim \, \| \partial_xw_\epsilon\|_{L^4(\TT)},
		\]
		we conclude that
		\[
		\| w_\epsilon\|_{W^{1,4}(\TT)}\,\lesssim \, \|w_\epsilon\|_{L^1(\TT)}\,+\,\| \partial_xw_\epsilon\|_{L^4(\TT)}.
		\]
		Inserting this in \eqref{Eq30} and using that $L^\frac{4}{\alpha+n+1}(\TT)\hookrightarrow L^1(\TT)$ due to \eqref{Condition_alpha_New} yields 
		\begin{equation}\label{Eq31}
			\| w_\epsilon\|_{L^r(\TT)}\,\lesssim_{\alpha,n} \, \| \partial_x w_\epsilon\|_{L^4(\TT)}^\nu  \| w_\epsilon\|_{L^\frac{4}{\alpha+n+1}(\TT)}^{1-\nu} \,+\, 
			\| w_\epsilon\|_{L^\frac{4}{\alpha+n+1}(\TT)}.
		\end{equation}
		Since 
		\begin{equation}\label{Eq35}
			r	\nu \,=\, 
			\tfrac{r(\alpha +n+1) -4}{\alpha+n+4}\,=\, 4,
		\end{equation}
		we obtain by integrating  the $r$-th power of \eqref{Eq31}  in time 
		\begin{align}\begin{split}
				\label{Eq32}
				\| w_\epsilon\|_{L^r([0,T]\times\TT)}^r\,&\lesssim_{\alpha, n} \,
				\int_0^T  \| \partial_x w_\epsilon\|_{L^4(\TT)}^{r\nu}  \| w_\epsilon\|_{L^\frac{4}{\alpha+n+1}(\TT)}^{r(1-\nu)}\,dt\,+\,
				\int_0^T \| w_\epsilon\|_{L^\frac{4}{\alpha+n+1}(\TT)}^r\,dt \\&\lesssim_T\, 
				\|\partial_x w_\epsilon \|_{L^4([0,T]\times \TT)}^4 
				\|w_\epsilon\|_{L^\infty(0,T;L^{\frac{4}{\alpha+n+1}}(\TT))}^{r(1-\nu)}\,+\, \|w_\epsilon\|_{L^\infty(0,T;L^{\frac{4}{\alpha+n+1}}(\TT))}^r.
			\end{split}
		\end{align}
		By \eqref{Eq223} we have $\frac{4p}{\alpha+n+1}\,=\, r$ and consequently
		\begin{align}
			\|u_\epsilon\|_{L^p([0,T]\times\TT)}^p\,=\,
			\bigl\|u_\epsilon^\frac{\alpha+n+1}{4}\bigr\|_{L^{\frac{4p}{\alpha+n+1}}([0,T]\times\TT)}^\frac{4p}{\alpha+n+1}\,=\,
			\|w_\epsilon\|_{L^{r}([0,T]\times\TT)}^{r},
		\end{align}
		and moreover
		\[\|u_\epsilon\|_{L^1(\TT)}\,=\, \bigl\|u_\epsilon^\frac{\alpha+n+1}{4}\bigr\|_{L^\frac{4}{\alpha+n+1}(\TT)}^\frac{4}{\alpha+n+1}\,=\, \|w_\epsilon\|_{L^\frac{4}{\alpha+n+1}(\TT)}^\frac{4}{\alpha+n+1}.\]
		Using these two identities in \eqref{Eq32},  taking the conditional expectation with respect to $\mathfrak{F}_0$  and applying  estimate \eqref{Eq25_New}, we conclude that
			\begin{align}&
			\EE\bigl[ \|u_\epsilon\|_{L^p([0,T]\times\TT)}^p\,\big|\,\mathfrak{F}_0 \bigr]
			\\&
			\quad\lesssim_{\alpha, n, T}\,
			\EE\Bigl[ \|\partial_x w_\epsilon \|_{L^4([0,T]\times \TT)}^4 
			\|u_\epsilon\|_{L^\infty(0,T;L^1(\TT))}^{\frac{r(1-\nu)(\alpha+n+1)}{4}}\,\Big|\,\mathfrak{F}_0\Bigr]\,+\, 
			\EE\Bigl[
			\|u_\epsilon\|_{L^\infty(0,T;L^1(\TT))}^{\frac{r(\alpha+n+1)}{4}}\,\Big|\,\mathfrak{F}_0\Bigr]
			\\&
			\quad\le\,
			\EE\Bigl[ \|\partial_x w_\epsilon \|_{L^4([0,T]\times \TT)}^4 
			\bigl(\|u_0\|_{\MM(\TT)}+\epsilon\bigr)^{\frac{r(1-\nu)(\alpha+n+1)}{4}}\,\Big|\,\mathfrak{F}_0\Bigr]\,+\,
			\bigl(\|u_0\|_{\MM(\TT)}+\epsilon\bigr)^{\frac{r(\alpha+n+1)}{4}}.
		\end{align}
				Using that by \eqref{Eq223} and \eqref{Eq35}
		\begin{align}&
			\tfrac{r(\alpha+n+1)}{4}\,=\, p,\\&
			\tfrac{r(1-\nu)(\alpha+n+1)}{4}\,=\, p\,-\, \tfrac{r\nu(\alpha+n+1)}{4}\,=\, 4,
		\end{align}
		and  estimates \eqref{Eq13_New}, \eqref{Eq103}, we  obtain 
			\begin{align}&
			\EE\bigl[ \|u_\epsilon\|_{L^p([0,T]\times\TT)}^p\,\big|\, \mathfrak{F}_0\bigr]
			\\&\quad \lesssim_{\alpha, n,T}\,
			\bigl(\|u_0\|_{\MM(\TT)}+\epsilon\bigr)^4\EE\bigl[\|\partial_x w_\epsilon \|_{L^4([0,T]\times \TT)}^4 \,\big|\, \mathfrak{F}_0
			\bigr]\,+\,
			\bigl(\|u_0\|_{\MM(\TT)}+\epsilon\bigr)^p
			\\&\quad\lesssim_{\alpha, n,p,\Lambda, T}\, 	\bigl(\|u_0\|_{\MM(\TT)}+\epsilon\bigr)^4  \bigl(\|u_0\|_{\MM(\TT)}^{\alpha+1} + \|u_0\|_{\MM(\TT)}^{\alpha+n-1}+\epsilon^{\alpha+1}+\|u_0\|_{\MM(\TT)}^{p-4} +\epsilon^{p-4}\bigr).
		\end{align}
		We use Assumption \ref{Condition_n}, \eqref{Eq223} and that $\epsilon\in (0,1)$ to simplify the right-hand side to
	\[	\EE\bigl[ \|u_\epsilon\|_{L^p([0,T]\times\TT)}^p\,\big|\, \mathfrak{F}_0\bigr]
	\,\lesssim_{\alpha, n,p,\Lambda, T}\,
\bigl(\|u_0\|_{\MM(\TT)}+\epsilon\bigr)^4   \bigl(\|u_0\|_{\MM(\TT)}^{p-n-4} +\|u_0\|_{\MM(\TT)}^{p-4}+\epsilon^{p-n-4}\bigr).
	\]
	Finally, \eqref{Eq38_New} follows by observing that  $\alpha$ depends only on $n$ and $p$.
	\end{proof}
	\begin{lemma}\label{Lemma_int_r}
		Let $r\in (\frac{n+4}{2},\frac{7}{2})$, then 
		\begin{equation}\label{Eq33}
			\EE\bigl[ \|\partial_x u_\epsilon\|_{L^r([0,T]\times \TT)}^r\,\big|\, \mathfrak{F}_0
			\bigr]\,\lesssim_{n, r, \Lambda, T}\, 
			(\|u_0\|_{\MM(\TT)}+\epsilon)^{4-r} \bigl(\|u_0\|_{\MM(\TT)}^{2r-n-4}+ \|u_0\|_{\MM(\TT)}^{2r-4}+\epsilon^{2r-n-4}\bigr).
		\end{equation}
	\end{lemma}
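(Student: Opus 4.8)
The plan is to follow the interpolation scheme sketched in Section~\ref{SubSec_Proof}: write $\partial_x u_\epsilon$ in terms of the entropy variable $w_\epsilon=u_\epsilon^{(\alpha+n+1)/4}$ for a suitable $\alpha$, use a space--time Hölder inequality to trade the $L^4$-control of $\partial_x w_\epsilon$ against an $L^p$-bound on $u_\epsilon$, and then feed in Lemma~\ref{lemma_int_p} and Proposition~\ref{Thm_Ito_s_formula_New}. Concretely, I would set $\alpha=2r-n-5$ and $p=2r$. The hypothesis $r\in(\frac{n+4}{2},\frac{7}{2})$ is then \emph{exactly} equivalent to \eqref{Condition_alpha_New}, i.e.\ $-1<\alpha<2-n$, and to $p\in(n+4,7)$, so both Proposition~\ref{Thm_Ito_s_formula_New} and Lemma~\ref{lemma_int_p} apply with these parameters. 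With this $\alpha$ one has $\alpha+n+1=2r-4$, so $w_\epsilon=u_\epsilon^{(r-2)/2}$ is precisely the function treated in the proof of Lemma~\ref{lemma_int_p} for $p=2r$; in particular $w_\epsilon\in W^{1,4}(\TT)$ $\PP\otimes dt$-a.e.\ with the chain rule $\partial_x w_\epsilon=\frac{r-2}{2}u_\epsilon^{(r-4)/2}\partial_x u_\epsilon$, whence $\partial_x u_\epsilon=\frac{2}{r-2}u_\epsilon^{(4-r)/2}\partial_x w_\epsilon$ $\PP\otimes dt$-a.e.

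Next I would apply Hölder's inequality on $[0,T]\times\TT$ with exponents $\frac{4}{4-r}$ and $\frac{4}{r}$ (admissible since $r<4$) to $\int_0^T\int_\TT|\partial_x u_\epsilon|^r=(\frac{2}{r-2})^r\int_0^T\int_\TT u_\epsilon^{r(4-r)/2}|\partial_x w_\epsilon|^r$, obtaining
\[
\|\partial_x u_\epsilon\|_{L^r([0,T]\times\TT)}^r\,\lesssim_{n,r}\,\|u_\epsilon\|_{L^{2r}([0,T]\times\TT)}^{r(4-r)/2}\,\|\partial_x w_\epsilon\|_{L^4([0,T]\times\TT)}^r,
\]
the exponents being forced by $\frac{r(4-r)}{2}\cdot\frac{4}{4-r}=2r$ and $r\cdot\frac{4}{r}=4$. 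Taking $\EE[\,\cdot\mid\mathfrak{F}_0]$ and applying a conditional Hölder inequality with the same pair of exponents yields
\[
\EE\bigl[\|\partial_x u_\epsilon\|_{L^r}^r\,\big|\,\mathfrak{F}_0\bigr]\,\lesssim_{n,r}\,\EE\bigl[\|u_\epsilon\|_{L^{2r}}^{2r}\,\big|\,\mathfrak{F}_0\bigr]^{(4-r)/4}\,\EE\bigl[\|\partial_x w_\epsilon\|_{L^4}^4\,\big|\,\mathfrak{F}_0\bigr]^{r/4}.
\]
Then Lemma~\ref{lemma_int_p} with $p=2r$ bounds the first factor (raised to $(4-r)/4$) by $(\|u_0\|_{\MM(\TT)}+\epsilon)^{4-r}S_1^{(4-r)/4}$ with $S_1=\|u_0\|_{\MM(\TT)}^{2r-n-4}+\|u_0\|_{\MM(\TT)}^{2r-4}+\epsilon^{2r-n-4}$, while \eqref{Eq13_New} together with \eqref{Eq103} (note $\alpha+1=2r-n-4$ and $\alpha+n-1=2r-6$) bounds the second factor by $S_2^{r/4}$ with $S_2=\|u_0\|_{\MM(\TT)}^{2r-n-4}+\|u_0\|_{\MM(\TT)}^{2r-6}+\epsilon^{2r-n-4}$.

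Finally I would clean up the exponents. Since $S_1,S_2\le S:=\|u_0\|_{\MM(\TT)}^{2r-n-4}+\|u_0\|_{\MM(\TT)}^{2r-4}+\|u_0\|_{\MM(\TT)}^{2r-6}+\epsilon^{2r-n-4}$ and $\frac{4-r}{4}+\frac r4=1$, one gets $S_1^{(4-r)/4}S_2^{r/4}\le S$. To absorb the surplus term $\|u_0\|_{\MM(\TT)}^{2r-6}$ I would use that $n>2$ (Consequence~\ref{Consequences_Ass_1}\ref{Item_Consequences_1}) gives $2r-n-4<2r-6<2r-4$, and for any $x\ge 0$ and exponents $a<b<c$ one has $x^b\le x^a+x^c$; hence $\|u_0\|_{\MM(\TT)}^{2r-6}\le\|u_0\|_{\MM(\TT)}^{2r-n-4}+\|u_0\|_{\MM(\TT)}^{2r-4}$ and $S\lesssim\|u_0\|_{\MM(\TT)}^{2r-n-4}+\|u_0\|_{\MM(\TT)}^{2r-4}+\epsilon^{2r-n-4}$, which combined with the $(\|u_0\|_{\MM(\TT)}+\epsilon)^{4-r}$ prefactor is exactly \eqref{Eq33}. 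The only genuinely delicate point is this bookkeeping: the same $\alpha$ must simultaneously produce $p=\alpha+n+5=2r$ for Lemma~\ref{lemma_int_p} and a valid exponent in the $\alpha$-entropy estimate \eqref{Eq13_New}, and the resulting term $\|u_0\|_{\MM(\TT)}^{\alpha+n-1}=\|u_0\|_{\MM(\TT)}^{2r-6}$ is harmless precisely because $n>2$; everything else is routine Hölder.
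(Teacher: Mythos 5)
Your proposal is correct and follows essentially the same route as the paper's proof: the same choices $\alpha=2r-n-5$, $p=2r$, the same space--time and conditional H\"older inequalities with exponents $\tfrac{4}{4-r}$ and $\tfrac{4}{r}$ (the paper phrases this as $\tfrac1r=\tfrac14+\tfrac{3-(\alpha+n)}{4p}$, which is the same split), and the same inputs from Lemma \ref{lemma_int_p} and Proposition \ref{Thm_Ito_s_formula_New}. Your final bookkeeping absorbing the $\|u_0\|_{\MM(\TT)}^{2r-6}$ term via $n>2$ is the step the paper leaves implicit, and it is carried out correctly.
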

\begin{proof} 
	We define $p=2r$ and $\alpha$ according to \eqref{Eq223}, such that in particular the assumptions of Lemma \ref{lemma_int_p} and Proposition \ref{Thm_Ito_s_formula_New} are satisfied.
	We consider again the function $w_\epsilon= u_\epsilon^\frac{\alpha+n+1}{4} $, which satisfies the chain rule by \eqref{Eq34}. Hence, using H\"older's inequality and that
	\begin{equation}\label{Eq36_New}
		\tfrac{1}{r}\,= \,\tfrac{1}{4}\,+\, \tfrac{3-(\alpha+n)}{4p},
	\end{equation}
	we can estimate
		\begin{align}
		\|\partial_x u_\epsilon\|_{L^r([0,T]\times \TT)}\,& \eqsim_{\alpha,n}\, 
		\bigl\|u_\epsilon^\frac{3-(\alpha+n)}{4}\partial_x w_\epsilon\bigr\|_{L^r([0,T]\times \TT)}\, \le \,
		\left\| \partial_x w_\epsilon\right\|_{L^4([0,T]\times \TT)}
		\bigl\|u_\epsilon^\frac{3-(\alpha+n)}{4}\bigr\|_{L^\frac{4p}{3-(\alpha+n)}([0,T]\times \TT)}\\&=\,
		\left\| \partial_x w_\epsilon\right\|_{L^4([0,T]\times \TT)}
		\left\|u_\epsilon\right\|_{L^p([0,T]\times \TT)}^\frac{3-(\alpha+n)}{4},
	\end{align}
Taking the $r$-th power on both sides, taking the  conditional expectation with respect to $\mathfrak{F}_0$, and employing the conditional H\"older inequality  yields 
\begin{align}
	\EE\bigl[ 
	\|\partial_x u_\epsilon\|_{L^r([0,T]\times \TT)}^r\,\big|\, \mathfrak{F}_0\bigr]\,& \lesssim_{\alpha,n} \,
	\EE\Bigl[ 
	\left\| \partial_x w_\epsilon\right\|_{L^4([0,T]\times \TT)}^r
	\left\|u_\epsilon\right\|_{L^p([0,T]\times \TT)}^\frac{r(3-(\alpha+n))}{4}\,\Big|\,\mathfrak{F}_0\Bigr]\\&\le \, 
	\EE\bigl[   \left\|
	\partial_x w_\epsilon\right\|_{L^4([0,T]\times \TT)}^4\,\big|\, \mathfrak{F}_0
	\bigr]^\frac{r}{4} 
	\EE\bigl[  \left\|u_\epsilon\right\|_{L^p([0,T]\times \TT)}^p\,\big|\, \mathfrak{F}_0
	\bigr]^\frac{r(3-(\alpha+n))}{4p} .
\end{align}
An explicit calculation yields $\frac{r(3-(\alpha+n))}{4p}=1-\frac{r}{4}$, such that inserting \eqref{Eq13_New}, \eqref{Eq38_New} and the definitions of $\alpha,p$ results in 
\begin{align}&
		\EE\bigl[ 
	\|\partial_x u_\epsilon\|_{L^r([0,T]\times \TT)}^r|\mathfrak{F}_0\bigr]\,\lesssim_{\alpha,n,p,\Lambda, T}\,
	\big(\|u_0\|_{\MM(\TT)}+\epsilon\big)^{4-r} \bigl(\|u_0\|_{\MM(\TT)}^{2r-n-4}+ \|u_0\|_{\MM(\TT)}^{2r-4}+\epsilon^{2r-n-4}\bigr).
\end{align}
Finally, using that $\alpha,p$ only depend on $n$ and $r$, we infer that \eqref{Eq33} holds.
\end{proof}
	\subsection{Temporal Regularity}
	In what follows, we use the estimates derived in Subsection \ref{SS_spatial} to deduce uniform estimates on the time increments of $u_\epsilon$  with values in a suitable negative Sobolev space on $\TT$. Since the estimates from Subsection \ref{SS_spatial} only give estimates on $\partial_x u_\epsilon$ and $u_\epsilon$, we need to rewrite the thin-film operator in the weaker form \cite[Eq. (3.2)]{Passo98ona}. Specifically, by integrating by parts  we obtain that
	\begin{align}\begin{split}
		\label{Eq39}&
		\int_{\{u_\epsilon>0\}}
		m(u_\epsilon) \partial_x^3 u_\epsilon \eta\, dx \\&\quad=\, \tfrac{n(n-1)}{2}\, \langle u_\epsilon^{n-2}(\partial_x u_\epsilon)^3, \eta
		\rangle\,+\,\tfrac{3n}{2}\,\langle
		u_\epsilon^{n-1}(\partial_x u_\epsilon)^2 , \partial_x\eta
		\rangle\,+\,
		\langle
		u_\epsilon^n \partial_x u_\epsilon,\partial_x^2 \eta
		\rangle
	\end{split}
\end{align}
	$\PP\otimes dt$-almost everywhere for every $\eta\in C^\infty(\TT)$. The integration by parts is justified by the regularity of $u_\epsilon$ obtained from Consequence \ref{Consequences_Ass_1} \ref{Item_Consequences_5}. In the subsequent lemma, we deduce estimates on the terms on the right-hand side of \eqref{Eq39}.
		\begin{lemma}
		\label{Lemma_reg_tf_int_New}Let $l\in \{0,1,2\}$ and $\nu_l\in (\frac{n+4}{n+4-l},\frac{7}{n+4-l})$, then 
		\begin{align}\begin{split}
				\label{Eq62_New}&
				\EE\biggl[
				\biggl\| \int_0^\cdot  u_\epsilon^{n-2+l} (\partial_x u_\epsilon)^{3-l}\, dt \biggr\|_{W^{1,\nu_l}( 0,T;L^{\nu_l}(\TT)) }^{\nu_l}\,\bigg|\, \mathfrak{F}_0
				\biggr]\\&\quad \lesssim_{ l,n,\nu_l, \Lambda,T}\, (\|u_0\|_{\MM(\TT)}+\epsilon)^{4-\nu_l(3-l)}
			 \bigl(\|u_0\|_{\MM(\TT)}^{\nu_l(n+4-l)-n-4}+ \|u_0\|_{\MM(\TT)}^{\nu_l(n+4-l)-4}+\epsilon^{\nu_l(n+4-l)-n-4}\bigr)
				.
			\end{split}
		\end{align}
	\end{lemma}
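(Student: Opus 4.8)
The plan is to recognise the $W^{1,\nu_l}(0,T;L^{\nu_l}(\TT))$-norm of $\int_0^\cdot u_\epsilon^{n-2+l}(\partial_x u_\epsilon)^{3-l}\,dt$ as being controlled, up to a $T$-dependent factor, by the space--time $L^{\nu_l}$-norm of the integrand $u_\epsilon^{n-2+l}(\partial_x u_\epsilon)^{3-l}$, and then to bound the latter by combining Lemmas \ref{lemma_int_p} and \ref{Lemma_int_r} through H\"older's inequality. Given $l$ and $\nu_l\in(\frac{n+4}{n+4-l},\frac{7}{n+4-l})$, I would set
\[
r\,=\,\tfrac{\nu_l(n+4-l)}{2},\qquad p\,=\,2r\,=\,\nu_l(n+4-l),
\]
and choose $\alpha$ according to \eqref{Eq223}. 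The restriction on $\nu_l$ is then exactly equivalent to $r\in(\frac{n+4}{2},\frac{7}{2})$ and $p\in(n+4,7)$, so that Lemmas \ref{lemma_int_p}, \ref{Lemma_int_r} and Proposition \ref{Thm_Ito_s_formula_New} all apply; moreover, since the right-hand sides of \eqref{Eq38_New} and \eqref{Eq33} are almost surely finite ($\|u_0\|_{\MM(\TT)}<\infty$ almost surely), we have $u_\epsilon\in L^p([0,T]\times\TT)$ and $\partial_x u_\epsilon\in L^r([0,T]\times\TT)$ almost surely.

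Next I would note that, because $n\in(2,3)$ by Consequence \ref{Consequences_Ass_1} \ref{Item_Consequences_1} (and $l\in\{0,1,2\}$), the exponents $\frac{p}{n-2+l}$ and $\frac{r}{3-l}$ are at least $1$ and satisfy $\frac{n-2+l}{p}+\frac{3-l}{r}=\frac{n-2+l+2(3-l)}{2r}=\frac{n+4-l}{2r}=\frac{1}{\nu_l}$. Hence H\"older's inequality on $[0,T]\times\TT$ yields
\[
\bigl\|u_\epsilon^{n-2+l}(\partial_x u_\epsilon)^{3-l}\bigr\|_{L^{\nu_l}([0,T]\times\TT)}\,\le\,\|u_\epsilon\|_{L^p([0,T]\times\TT)}^{n-2+l}\,\|\partial_x u_\epsilon\|_{L^r([0,T]\times\TT)}^{3-l},
\]
which is almost surely finite, so the integrand lies in $L^{\nu_l}(0,T;L^{\nu_l}(\TT))\subset L^1(0,T;L^{\nu_l}(\TT))$ almost surely. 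By the fundamental theorem of calculus for Bochner integrals, $\int_0^\cdot u_\epsilon^{n-2+l}(\partial_x u_\epsilon)^{3-l}\,dt$ then belongs to $W^{1,\nu_l}(0,T;L^{\nu_l}(\TT))$ with weak time derivative $u_\epsilon^{n-2+l}(\partial_x u_\epsilon)^{3-l}$; bounding the remaining $L^{\nu_l}(0,T;L^{\nu_l}(\TT))$-contribution by pulling the time integral inside the $L^{\nu_l}(\TT)$-norm and using H\"older in time, I would arrive at
\[
\biggl\|\int_0^\cdot u_\epsilon^{n-2+l}(\partial_x u_\epsilon)^{3-l}\,dt\biggr\|_{W^{1,\nu_l}(0,T;L^{\nu_l}(\TT))}^{\nu_l}\,\lesssim_{\nu_l,T}\,\|u_\epsilon\|_{L^p([0,T]\times\TT)}^{(n-2+l)\nu_l}\,\|\partial_x u_\epsilon\|_{L^r([0,T]\times\TT)}^{(3-l)\nu_l}.
\]

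Finally I would take the conditional expectation given $\mathfrak{F}_0$. By the displayed identity, $\frac{p}{(n-2+l)\nu_l}$ and $\frac{r}{(3-l)\nu_l}$ are conjugate, so the conditional H\"older inequality bounds the conditional expectation of the last right-hand side by $\EE[\|u_\epsilon\|_{L^p([0,T]\times\TT)}^p\mid\mathfrak{F}_0]^{\theta_1}\,\EE[\|\partial_x u_\epsilon\|_{L^r([0,T]\times\TT)}^r\mid\mathfrak{F}_0]^{\theta_2}$ with $\theta_1=\frac{(n-2+l)\nu_l}{p}$, $\theta_2=\frac{(3-l)\nu_l}{r}$ and $\theta_1+\theta_2=1$. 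Inserting \eqref{Eq38_New} and \eqref{Eq33} and using $p=2r$ — so that the exponents of $\|u_0\|_{\MM(\TT)}$ and of $\epsilon$ in the two bounds coincide (namely $2r-n-4$ and $2r-4$, respectively $2r-n-4$) — the product collapses to $(\|u_0\|_{\MM(\TT)}+\epsilon)^{4\theta_1+(4-r)\theta_2}$ times a sum of products $A^{\theta_1}B^{\theta_2}$ with $A,B\in\{\|u_0\|_{\MM(\TT)}^{2r-n-4},\|u_0\|_{\MM(\TT)}^{2r-4},\epsilon^{2r-n-4}\}$. Here $4\theta_1+(4-r)\theta_2=4-r\theta_2=4-\nu_l(3-l)$, and each $A^{\theta_1}B^{\theta_2}$ is dominated by $\|u_0\|_{\MM(\TT)}^{2r-n-4}+\|u_0\|_{\MM(\TT)}^{2r-4}+\epsilon^{2r-n-4}$ using subadditivity of $t\mapsto t^\theta$ for $\theta\in(0,1)$, Young's inequality, and $\epsilon\in(0,1)$. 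Since $2r-n-4=\nu_l(n+4-l)-n-4$ and $2r-4=\nu_l(n+4-l)-4$, and $p,r,\alpha$ depend only on $l,n,\nu_l$, this is exactly \eqref{Eq62_New}. The only genuine difficulty I anticipate is the exponent bookkeeping — in particular checking that $p=2r$ is the choice making the lower-order exponents of Lemmas \ref{lemma_int_p} and \ref{Lemma_int_r} match, and that every H\"older exponent used is admissible, which rests on $n>2$ and $l\le 2$; everything else is routine.
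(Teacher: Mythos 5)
Your proposal is correct and follows essentially the same route as the paper's proof: the same choice $p=2r=\nu_l(n+4-l)$, the same Hölder splitting of the integrand using $\frac{n-2+l}{p}+\frac{3-l}{r}=\frac{1}{\nu_l}$, the same conditional Hölder step with conjugate weights $\theta_1,\theta_2$, and the same insertion of Lemmas \ref{lemma_int_p} and \ref{Lemma_int_r}. The exponent bookkeeping you carry out (in particular $4\theta_1+(4-r)\theta_2=4-\nu_l(3-l)$ and the matching of the lower-order powers via $p=2r$) agrees with the paper's computation.
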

	\begin{proof}
		We choose $p=2r=\nu_l(n+4-l)$, such that in particular $p\in (n+4,7)$ and $r\in (\frac{n+4}{2},\frac{7}{2})$, meaning that the assumptions of Lemma \ref{lemma_int_p} and Lemma \ref{Lemma_int_r} are satisfied. Moreover, we have 
		\begin{equation}\label{Eq228}
\tfrac{	n-2+l
}{p}	\,+\, \tfrac{3-l}{r}\,=\, \tfrac{n+4-l}{2r}\,=\, \tfrac{1}{\nu_l}.	\end{equation}
Hence, using that  $\int_0^\cdot  u_\epsilon^{n-2+l} (\partial_x u_\epsilon)^{3-l}\, dt$ is starting at $0$ and admits its integrand as weak derivative, as well as H\"older's inequality, we can estimate
\begin{align}\begin{split}\label{Eq66_New}
		&
		\biggl\| 
		\int_0^\cdot  u_\epsilon^{n-2+l} (\partial_x u_\epsilon)^{3-l}\, dt
		\biggr\|_{W^{1,\nu_l}( 0,T;L^{\nu_l}(\TT)) }\,\lesssim_{T}\, \| u_\epsilon^{n-2+l} (\partial_x u_\epsilon)^{3-l}  \|_{L^{\nu_l}([0,T]\times\TT)} \\&\quad\le \,
		\|u_\epsilon^{n-2+l}\|_{L^\frac{p}{n-2+l}([0,T]\times\TT)} \|(\partial_x u_\epsilon)^{3-l}\|_{L^\frac{r}{3-l}([0,T]\times\TT)}
		\,=\, 
		\|u_\epsilon\|_{L^p([0,T]\times\TT)}^{n-2+l}\|\partial_x u_\epsilon\|_{L^r([0,T]\times\TT)}^{3-l}.
	\end{split}
\end{align}
Taking the $\nu_l$-th power on both sides and the conditional expectation with respect to $\mathfrak{F}_0$, using \eqref{Eq38_New} and  \eqref{Eq33}, and employing the conditional H\"older's inequality we conclude 
\begin{align}\begin{split}\label{Eq67}
		&
		\EE\biggl[\biggl\| 
		\int_0^\cdot  u_\epsilon^{n-2+l} (\partial_x u_\epsilon)^{3-l}\, dt
		\biggr\|_{W^{1,\nu_l}( 0,T;L^{\nu_l}(\TT)) }^{\nu_l}\, \bigg|\, \mathfrak{F}_0
		\biggr]\,\lesssim_{\nu_l,T} \, 
		\EE\bigl[
		\|u_\epsilon\|_{L^p([0,T]\times\TT)}^{{\nu_l}(n-2+l)}\|\partial_x u_\epsilon\|_{L^r([0,T]\times\TT)}^{{\nu_l}(3-l)}\,\big|\,\mathfrak{F}_0
		\bigr]\\&\quad \le \, 
		\EE\bigl[
		\|u_\epsilon\|_{L^p([0,T]\times\TT)}^{p}\,\big|\, \mathfrak{F}_0
		\bigr]^{\frac{{\nu_l}(n-2+l)}{p}} 
		\EE\bigl[\|\partial_x u_\epsilon\|_{L^r([0,T]\times\TT)}^{r}\,\big|\, \mathfrak{F}_0
		\bigr]^{\frac{{\nu_l}(3-l)}{r}} 
		\\&\quad \lesssim_{n,p,r,\Lambda, T}\,
		(\|u_0\|_{\MM(\TT)}+\epsilon)^{\frac{4{\nu_l}(n-2+l)}{p} 
		}\bigl(\|u_0\|_{\MM(\TT)}^{p-n-4}+ \|u_0\|_{\MM(\TT)}^{p-4}+\epsilon^{p-n-4}\bigr)^\frac{{\nu_l}(n-2+l)}{p} 
	\\&\qquad \times 
			(\|u_0\|_{\MM(\TT)}+\epsilon)^{\frac{{(4-r)\nu_l}(3-l)}{r}} \bigl(\|u_0\|_{\MM(\TT)}^{2r-n-4}+ \|u_0\|_{\MM(\TT)}^{2r-4}+\epsilon^{2r-n-4}\bigr)^\frac{\nu_l(3-l)}{r}.
		\end{split}
	\end{align}
	The claim follows by using \eqref{Eq228} and inserting the definitions of $p$ and $r$.
	\end{proof}
		We derive similar estimates on terms appearing in the Stratonovich correction term  in \eqref{Eq55_New}.
	\begin{lemma}\label{Lemma_reg_strat_int_New}
			Let $l\in \{3,4\}$ and $\nu_l\in(\frac{n+4}{n+3-l},\frac{7}{n+3-l})$, then 
			\begin{align}\begin{split}\label{Eq229}&
						\EE\biggl[
					\biggl\| \sum_{k\in \ZZ} \int_0^\cdot  
					\sigma_{k,\epsilon}^2 (q'(u_\epsilon))^2 \partial_x u_\epsilon
					\, dt \biggr\|_{W^{1,\nu_3}(0,T;L^{\nu_3}(\TT)) }^{\nu_3}\,\bigg|\,\mathfrak{F}_0
					\biggr]
					\\&\quad \lesssim_{n, \nu_3,\Lambda, T}
					\, (\|u_0\|_{\MM(\TT)}+\epsilon)^{4-\nu_3}
					\bigl(
					\|u_0\|_{\MM(\TT)}^{n\nu_3-n-4}+\|u_0\|_{\MM(\TT)}^{n\nu_3-4}+\epsilon^{n\nu_3-n-4}
					\bigr)
				\end{split}
			\end{align} and 
		\begin{align}\begin{split}\label{Eq231}
				& 
			\EE\biggl[
			\biggl\| \sum_{k\in \ZZ} \int_0^\cdot 
			\sigma_{k,\epsilon}\partial_x \sigma_{k,\epsilon} q(u_\epsilon) q'(u_\epsilon) 
			\, dt \biggr\|_{W^{1,\nu_4}( 0,T;L^{\nu_4}(\TT)) }^{\nu_4} \,\bigg|\,\mathfrak{F}_0
			\biggr]\\&\quad \lesssim_{ n,\nu_4, \Lambda,T}\, (\|u_0\|_{\MM(\TT)}+\epsilon)^{4}
			\bigl(
			\|u_0\|_{\MM(\TT)}^{(n-1)\nu_4-n-4}+\|u_0\|_{\MM(\TT)}^{(n-1)\nu_4-4}+\epsilon^{(n-1)\nu_4-n-4}
			\bigr).
			\end{split}
		\end{align} 
	\end{lemma}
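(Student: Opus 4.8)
The proof is parallel to that of Lemma \ref{Lemma_reg_tf_int_New}, with the thin-film monomials replaced by the two building blocks of the Stratonovich correction term in \eqref{Eq55_New}. First I would reduce both integrands to monomials in $u_\epsilon$ and $\partial_x u_\epsilon$: since Consequence \ref{Consequences_Ass_1} \ref{Item_Consequences_4} gives $u_\epsilon>0$ $\PP\otimes dt\otimes dx$-almost everywhere, one has the pointwise identities $(q'(u_\epsilon))^2=\tfrac{n^2}{4}u_\epsilon^{n-2}$ and $q(u_\epsilon)q'(u_\epsilon)=\tfrac n2 u_\epsilon^{n-1}$, so that, bounding the $k$-sums with the help of \eqref{Consequence_sigma_New},
\[
\Bigl|\sum_{k\in\ZZ}\sigma_{k,\epsilon}^2(q'(u_\epsilon))^2\partial_x u_\epsilon\Bigr|\,\lesssim_{n,\Lambda}\, u_\epsilon^{n-2}|\partial_x u_\epsilon|,\qquad
\Bigl|\sum_{k\in\ZZ}\sigma_{k,\epsilon}\partial_x\sigma_{k,\epsilon}\,q(u_\epsilon)q'(u_\epsilon)\Bigr|\,\lesssim_{n,\Lambda}\, u_\epsilon^{n-1}.
\]
Exactly as in Lemma \ref{Lemma_reg_tf_int_New}, the primitives $\int_0^\cdot(\cdots)\,dt$ start at $0$ and admit their integrand as weak time derivative, so their $W^{1,\nu_l}(0,T;L^{\nu_l}(\TT))$-norm is $\lesssim_T$ the $L^{\nu_l}([0,T]\times\TT)$-norm of the respective integrand.

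For \eqref{Eq229} I would put $p=2r=n\nu_3$; the assumption $\nu_3\in(\tfrac{n+4}{n},\tfrac7n)$ translates into $p\in(n+4,7)$ and $r\in(\tfrac{n+4}{2},\tfrac72)$, so Lemmas \ref{lemma_int_p} and \ref{Lemma_int_r} apply. The exponent relation $\tfrac{n-2}{p}+\tfrac1r=\tfrac n{2r}=\tfrac1{\nu_3}$ yields, by H\"older's inequality, $\|u_\epsilon^{n-2}\partial_x u_\epsilon\|_{L^{\nu_3}([0,T]\times\TT)}\le\|u_\epsilon\|_{L^p([0,T]\times\TT)}^{n-2}\|\partial_x u_\epsilon\|_{L^r([0,T]\times\TT)}$. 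Raising this to the power $\nu_3$, taking the conditional expectation given $\mathfrak{F}_0$ and applying the conditional H\"older inequality with the weights $\tfrac{(n-2)\nu_3}{p}=\tfrac{n-2}{n}$ and $\tfrac{\nu_3}{r}=\tfrac2n$, which sum to one, I would insert \eqref{Eq38_New} and \eqref{Eq33}. The resulting power of $(\|u_0\|_{\MM(\TT)}+\epsilon)$ is $\tfrac{4(n-2)}{n}+\tfrac{2(4-r)}{n}=4-\tfrac{2r}{n}=4-\nu_3$, and since both bracketed factors in \eqref{Eq38_New} and \eqref{Eq33} equal $\|u_0\|_{\MM(\TT)}^{n\nu_3-n-4}+\|u_0\|_{\MM(\TT)}^{n\nu_3-4}+\epsilon^{n\nu_3-n-4}$ for the present choice of $p=2r=n\nu_3$, while their exponents add up to $1$, the estimate \eqref{Eq229} follows.

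For \eqref{Eq231} the argument is shorter, because no factor $\partial_x u_\epsilon$ occurs: I would set $p=(n-1)\nu_4$, so that $\nu_4\in(\tfrac{n+4}{n-1},\tfrac7{n-1})$ gives $p\in(n+4,7)$ and only Lemma \ref{lemma_int_p} is needed. Then $\|u_\epsilon^{n-1}\|_{L^{\nu_4}([0,T]\times\TT)}^{\nu_4}=\|u_\epsilon\|_{L^p([0,T]\times\TT)}^{p}$, and conditioning on $\mathfrak{F}_0$ and inserting \eqref{Eq38_New} directly produces the right-hand side of \eqref{Eq231}, with prefactor $(\|u_0\|_{\MM(\TT)}+\epsilon)^4$ and bracket $\|u_0\|_{\MM(\TT)}^{(n-1)\nu_4-n-4}+\|u_0\|_{\MM(\TT)}^{(n-1)\nu_4-4}+\epsilon^{(n-1)\nu_4-n-4}$.

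There is no genuinely hard step here; the only thing demanding attention is the exponent bookkeeping — checking that the chosen $p$ and $r$ land in the admissible ranges of Lemmas \ref{lemma_int_p} and \ref{Lemma_int_r}, that the conditional H\"older weights sum to one, and that the powers of $\|u_0\|_{\MM(\TT)}+\epsilon$ combine correctly — together with the uniform-in-$\epsilon$ summability $\sum_{k\in\ZZ}\|\sigma_k\|_{C^1(\TT)}^2<\infty$ guaranteed by Assumption \ref{Condition_sigma} via \eqref{Consequence_sigma_New}, which makes the series over $k$ harmless.
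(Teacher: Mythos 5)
Your proposal is correct and follows essentially the same route as the paper: reduce the summands to $u_\epsilon^{n-2}\partial_x u_\epsilon$ and $u_\epsilon^{n-1}$ using the uniform summability \eqref{Consequence_sigma_New}, choose $p=2r=n\nu_3$ (resp. $p=(n-1)\nu_4$), and apply H\"older together with the conditional H\"older inequality and Lemmas \ref{lemma_int_p}, \ref{Lemma_int_r} exactly as in Lemma \ref{Lemma_reg_tf_int_New}. The exponent bookkeeping you carry out ($\tfrac{4(n-2)}{n}+\tfrac{2(4-r)}{n}=4-\nu_3$, weights summing to one) matches the paper's computation, and your explicit treatment of \eqref{Eq231} fills in what the paper dismisses as "analogous".
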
	
\begin{proof}We first consider \eqref{Eq229} and define $p,r$  by $p=2r=n\nu_3$, such that the assumptions of Lemma \ref{lemma_int_p} and Lemma \ref{Lemma_int_r} are satisfied. Then
	\begin{equation}\label{Eq230}
		\tfrac{n-2}{p}\,+\, \tfrac{1}{r}\,=\, \tfrac{1}{\nu_3}.
	\end{equation}
		We use that $\sum_{k\in \ZZ} \int_0^\cdot  
	\sigma_{k,\epsilon}^2 (q'(u_\epsilon))^2 \partial_x u_\epsilon
	\, dt$ starts at $0$ to estimate  
	\begin{align}&
		\biggl\| \sum_{k\in \ZZ} \int_0^\cdot  
		\sigma_{k,\epsilon}^2 (q'(u_\epsilon))^2 \partial_x u_\epsilon
		\, dt \biggr\|_{W^{1,\nu_3}(0,T;L^{\nu_3}(\TT)) }
		\,\lesssim_T \,
		\biggl\| \sum_{k\in \ZZ} 
		\sigma_{k,\epsilon}^2 (q'(u_\epsilon))^2 \partial_x u_\epsilon
		\biggr\|_{L^{\nu_3}([0,T]\times \TT) }
		\\&\quad \lesssim_n\,
		\sum_{k\in \ZZ} 
		\|\sigma_{k,\epsilon}\|_{C(\TT)}^2 \|u_\epsilon^{n-2}\partial_x u_\epsilon
		\|_{L^{\nu_3}([0,T]\times \TT) }\, \lesssim_\Lambda\, \|u_\epsilon^{n-2}\partial_x u_\epsilon
		\|_{L^{\nu_3}([0,T]\times \TT) }.
	\end{align}
	Proceeding as in \eqref{Eq67}, we obtain that
	\begin{align}&
		\EE\biggl[
		\biggl\| \sum_{k\in \ZZ} \int_0^\cdot  
		\sigma_{k,\epsilon}^2 (q'(u_\epsilon))^2 \partial_x u_\epsilon
		\, dt \biggr\|_{W^{1,\nu_3}(0,T;L^{\nu_3}(\TT)) }^{\nu_3}\,\bigg|\, \mathfrak{F}_0
		\biggr]\\&\quad \lesssim_{n,\Lambda, T}\, \EE\bigl[
	\|u_\epsilon\|_{L^p([0,T]\times \TT)}^p\,\big|\, \mathfrak{F}_0
		\bigr]^\frac{\nu_3(n-2)}{p}
		\EE\bigl[\|\partial_x u_\epsilon\|_{L^r([0,T]\times \TT)}^r\,\big|\,\mathfrak{F}_0
		\bigr]^\frac{\nu_3}{r}
		\\&\quad \lesssim_{n,p,r,\Lambda, T} 	(\|u_0\|_{\MM(\TT)}+\epsilon)^\frac{4\nu_3(n-2)}{p}  \bigl(\|u_0\|_{\MM(\TT)}^{p-n-4}+ \|u_0\|_{\MM(\TT)}^{p-4}+\epsilon^{p-n-4}\bigr)^\frac{\nu_3(n-2)}{p}
		\\&\quad \quad \times 
		(\|u_0\|_{\MM(\TT)}+\epsilon)^\frac{(4-r)\nu_3}{r} \bigl(\|u_0\|_{\MM(\TT)}^{2r-n-4}+ \|u_0\|_{\MM(\TT)}^{2r-4}+\epsilon^{2r-n-4}\bigr)^\frac{\nu_3}{r}.
	\end{align}
	The claimed estimate \eqref{Eq229} follows by using \eqref{Eq230} and inserting the definitions of $p$ and $r$. The second estimate \eqref{Eq231} can be derived analogously with the choice $p=(n-1)\nu_3$.
	\end{proof}
		Lastly, we obtain temporal regularity of the stochastic integral in \eqref{Eq55_New}.
		
		\begin{lemma}\label{Lemma_reg_stoch_int_New}
			Let $\nu_5\in (\frac{2(n+4)}{n}, \frac{14}{n})$, $\gamma_5 \in (0,\frac{1}{2})$. Then 
			\begin{align}\begin{split}\label{Eq233}
					&
			\EE\biggl[\biggl\|
			\sum_{k\in \ZZ} \int_0^\cdot \sigma_{k,\epsilon} q(u_\epsilon)\, d\beta^{(k)}_t
			\biggr\|_{W^{\gamma_5,\nu_5}(0,T; L^2(\TT))}^{\nu_5}\,\bigg|\,\mathfrak{F}_0
			\biggr]
			\\&\quad \lesssim_{\gamma_5, n, \nu_5, \Lambda, T}\,
			(\|u_0\|_{\MM(\TT)}+\epsilon)^4 \Bigl(
			\|u_0\|_{\MM(\TT)}^{\frac{n\nu_5}{2}-n-4} +\|u_0\|_{\MM(\TT)}^{\frac{n\nu_5}{2}-4}+\epsilon^{\frac{n\nu_5}{2}-n-4}
			\Bigr).
				\end{split}
			\end{align}
		\end{lemma}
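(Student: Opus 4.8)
The plan is to reduce the claim to the spatial bound of Lemma \ref{lemma_int_p} via a Flandoli--Gatarek type estimate for stochastic integrals in fractional Sobolev spaces in time. First I would record that the $L_2(l^2(\ZZ),L^2(\TT))$-valued process $\Psi_\epsilon$ given by $\Psi_\epsilon e_k=\sigma_{k,\epsilon}q(u_\epsilon)$ is progressively measurable and satisfies, by \eqref{Consequence_sigma_New} and Consequence \ref{Consequences_Ass_1}\ref{Item_Consequences_5},
\begin{equation*}
\EE\biggl[\int_0^T\|\Psi_\epsilon(t)\|_{L_2(l^2(\ZZ),L^2(\TT))}^2\,dt\biggr]\,\lesssim_{n,\Lambda}\,\EE\biggl[\int_0^T\|u_\epsilon(t)\|_{L^n(\TT)}^n\,dt\biggr]\,\lesssim_T\,\EE\Bigl[\sup_{0\le t\le T}\|u_\epsilon(t)\|_{H^1(\TT)}^n\Bigr]\,<\,\infty,
\end{equation*}
so that $M_\epsilon\coloneqq\sum_{k\in\ZZ}\int_0^\cdot\sigma_{k,\epsilon}q(u_\epsilon)\,d\beta^{(k)}_t$ is a well-defined $L^2(\TT)$-valued process.

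Next, since $\nu_5>2$ --- which holds under Assumption \ref{Condition_n}, as $\tfrac{2(n+4)}{n}>2$ --- and $\gamma_5\in(0,\tfrac12)$, a Flandoli--Gatarek type estimate for stochastic integrals in Sobolev--Slobodeckij spaces in time (cf.\ \cite{DHV_16} and the references therein) applies. To obtain the conditional form, for $A\in\mathfrak{F}_0$ I would write $\mathbbm{1}_AM_\epsilon=\sum_{k\in\ZZ}\int_0^\cdot\mathbbm{1}_A\sigma_{k,\epsilon}q(u_\epsilon)\,d\beta^{(k)}_t$ and apply the unconditional estimate to the integrand $\mathbbm{1}_A\Psi_\epsilon$, giving
\begin{equation*}
\EE\Bigl[\mathbbm{1}_A\|M_\epsilon\|_{W^{\gamma_5,\nu_5}(0,T;L^2(\TT))}^{\nu_5}\Bigr]\,\lesssim_{\gamma_5,\nu_5,T}\,\EE\biggl[\mathbbm{1}_A\int_0^T\|\Psi_\epsilon(t)\|_{L_2(l^2(\ZZ),L^2(\TT))}^{\nu_5}\,dt\biggr];
\end{equation*}
since $A\in\mathfrak{F}_0$ is arbitrary this is exactly the inequality between the corresponding conditional expectations given $\mathfrak{F}_0$. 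This conditional reformulation is the only genuinely delicate point, but the $\mathbbm{1}_A$-trick above --- already used in the proof of Proposition \ref{Thm_Ito_s_formula_New} --- makes it routine.

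It then remains to bound the right-hand side using the previously established estimates. By \eqref{Consequence_sigma_New} and $u_\epsilon\ge0$,
\begin{equation*}
\|\Psi_\epsilon(t)\|_{L_2(l^2(\ZZ),L^2(\TT))}^2\,=\,\sum_{k\in\ZZ}\|\sigma_{k,\epsilon}q(u_\epsilon)\|_{L^2(\TT)}^2\,\le\,\Bigl(\sum_{k\in\ZZ}\|\sigma_{k,\epsilon}\|_{C(\TT)}^2\Bigr)\int_\TT u_\epsilon^n\,dx\,\lesssim_\Lambda\,\|u_\epsilon(t)\|_{L^n(\TT)}^n,
\end{equation*}
hence $\|\Psi_\epsilon(t)\|_{L_2(l^2(\ZZ),L^2(\TT))}^{\nu_5}\lesssim_{n,\Lambda}\|u_\epsilon(t)\|_{L^n(\TT)}^{n\nu_5/2}$. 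Setting $p=\tfrac{n\nu_5}{2}$, the hypothesis $\nu_5\in(\tfrac{2(n+4)}{n},\tfrac{14}{n})$ is precisely $p\in(n+4,7)$, and since $n<p$ the embedding $L^p(\TT)\hookrightarrow L^n(\TT)$ gives $\int_0^T\|u_\epsilon(t)\|_{L^n(\TT)}^p\,dt\lesssim_T\|u_\epsilon\|_{L^p([0,T]\times\TT)}^p$. Combining the previous two displays with Lemma \ref{lemma_int_p}, i.e.\ \eqref{Eq38_New}, and substituting $p=\tfrac{n\nu_5}{2}$ (so that $p-n-4=\tfrac{n\nu_5}{2}-n-4$ and $p-4=\tfrac{n\nu_5}{2}-4$) yields exactly \eqref{Eq233}. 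No further interpolation or analysis of the noise beyond \eqref{Consequence_sigma_New} should be needed.
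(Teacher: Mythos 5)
Your proposal is correct and follows essentially the same route as the paper: define the Hilbert–Schmidt-valued integrand, apply the Flandoli–Gatarek fractional-Sobolev estimate to $\mathbbm{1}_A$ times the integrand for $A\in\mathfrak{F}_0$ to get the conditional form, bound the Hilbert–Schmidt norm by $\|u_\epsilon\|_{L^n(\TT)}^{n\nu_5/2}$ via \eqref{Consequence_sigma_New}, and conclude with Lemma \ref{lemma_int_p} for $p=\tfrac{n\nu_5}{2}$. The only cosmetic difference is the citation for the fractional-in-time estimate (the paper uses \cite[Lemma 2.1]{flandoli_gatarek_1995}).
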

		\begin{proof}
			We define the linear operator $\Phi_\epsilon\colon l^2(\ZZ)\to L^2(\TT)$ by setting $\Phi_\epsilon e_k\,=\,\sigma_{k,\epsilon}q(u_\epsilon)$ so that we can write in what follows
			\[\sum_{k\in \ZZ}
			\int_0^\cdot \sigma_{k,\epsilon} q(u_\epsilon)\, d\beta^{(k)}_t\,=\,
			\int_0^\cdot \Phi_\epsilon \, d\beta_t,
			\]
			where $\beta$ is the cylindrical Wiener process in $l^2(\ZZ)$ given by $e_k \mapsto \beta^{(k)}$. We let $A\in \mathfrak{F}_0$. Then, using \cite[Lemma 2.1]{flandoli_gatarek_1995}, we calculate
			\begin{align}& \EE\biggl[
				\mathbbm{1}_{A} \biggl\|
				\int_0^\cdot \Phi_\epsilon \, d\beta_t
				\biggr\|_{W^{\gamma_5,{\nu_5}}(0,T; L^2(\TT))}^{\nu_5}
				\biggr]\,=\,
				\EE\biggl[ \biggl\|
				\int_0^\cdot 
				\mathbbm{1}_{A}\Phi_\epsilon \, d\beta_t
				\biggr\|_{W^{\gamma_5,{\nu_5}}(0,T; L^2(\TT))}^{\nu_5}
				\biggr]\\&\quad\lesssim_{\gamma_5,{\nu_5}}\, 
				\EE\biggl[ 
				\int_0^T \|
				\mathbbm{1}_{A}\Phi_\epsilon \|_{L_2(l^2(\ZZ), L^2(\TT))}^{\nu_5}\, dt
				\biggr]\,=\,
				\EE\biggl[ 
				\mathbbm{1}_{A}
				\int_0^T \|\Phi_\epsilon \|_{L_2(l^2(\ZZ), L^2(\TT))}^{\nu_5}\, dt
				\biggr].
			\end{align}
			To further estimate the latter, we use \eqref{Consequence_sigma_New} and that $\nu_5\ge 2$ to infer
			\begin{align}&
				\int_0^T  \left\|\Phi_\epsilon \right\|_{L_2(l^2(\ZZ), L^2(\TT))}^{\nu_5}\, dt\,=\,
				\int_0^T \biggl(\sum_{k\in \ZZ} \|\sigma_{k,\epsilon} q(u_\epsilon)\|_{L^2(\TT)}^2 \biggr)^\frac{{\nu_5}}{2}\, dt\\&\quad \le \, 
				\int_0^T \biggl(\sum_{k\in \ZZ} \|\sigma_{k,\epsilon}\|_{C(\TT)}^2 \bigl\|u_\epsilon^\frac{n}{2}\bigr\|_{L^2(\TT)}^2 \biggr)^\frac{{\nu_5}}{2}\, dt\,\lesssim_\Lambda\, 
				\int_0^T \|u_\epsilon\|_{L^n(\TT)}^\frac{{\nu_5} n}{2}\,dt\,\le \, 
				\left\|u_\epsilon\right\|_{L^\frac{{\nu_5} n}{2}([0,T]\times\TT)}^\frac{{\nu_5} n}{2}.
			\end{align}
			Finally, we set $p=\frac{{\nu_5} n}{2}$ in accordance with the assumption of Lemma \ref{lemma_int_p} and consequently we can use \eqref{Eq38_New} to conclude that
			\begin{align}& \EE\biggl[
				\mathbbm{1}_{A} \biggl\|
				\int_0^\cdot \Phi_\epsilon \, d\beta_t
				\biggr\|_{W^{\gamma_5,{\nu_5}}(0,T; L^2(\TT))}^{\nu_5}
				\biggr]\,\lesssim_{\gamma_5, {\nu_5}, \Lambda}\, 
				\EE\bigl[ 
				\mathbbm{1}_{A}\left\|u_\epsilon\right\|_{L^p([0,T]\times\TT)}^p
				\bigr]\\&\quad
				=\, \EE\bigl[ 
				\mathbbm{1}_{A}\EE\bigl[\left\|u_\epsilon\right\|_{L^p([0,T]\times\TT)}^p\,\big|\, \mathfrak{F}_0\bigr]
				\bigr]
				\\&\quad \lesssim_{ n,p,\Lambda, T}\, 
			 \EE\bigl[\mathbbm{1}_{A}	(\|u_0\|_{\MM(\TT)}+\epsilon)^4 \bigl(\|u_0\|_{\MM(\TT)}^{p-n-4}+ \|u_0\|_{\MM(\TT)}^{p-4}+\epsilon^{p-n-4}\bigr)\bigr].
			\end{align}
		It remains to use that $A\in \mathfrak{F}_0$ was arbitrary and to insert the definition of $p$.
		\end{proof}
			Finally, we combine  the previous results from this subsection to deduce a uniform estimate on the temporal increments of $u_\epsilon$ in terms of its Sobolev-Slobodeckij norm.
			\begin{lemma}\label{Lemma_temporal_regularity_New}
					Let $\gamma\in(0,\frac{1}{2})$, $\mu\in (\frac{n+4}{n+2},\frac{7}{n+2})$ and $\nu\in (1,\frac{7}{n+4})$, then
					\begin{align}\begin{split}\label{Eq75_New2}
							&
							\EE\Bigl[
							\left\|
							u_\epsilon
							\right\|^{\nu}_{W^{\gamma, \frac{2\nu}{2-\nu}}(0,T;W^{-3,\mu}(\TT))}
							\,\Big|\, \mathfrak{F}_0 \Bigr]\\&\quad\lesssim_{\gamma, n, \mu,\nu, \Lambda, T}\, \bigl(
							\|u_0\|_{\MM(\TT)}+\epsilon
							\bigr)^{(n-1-\frac{n^2}{p})\nu}\,+\, 
							\bigl(
							\|u_0\|_{\MM(\TT)}+\epsilon
							\bigr)^{(n+1)\nu},
						\end{split}
					\end{align}
					where $p=\max\{ \mu(n+2), \nu(n+4) \}$.
			\end{lemma}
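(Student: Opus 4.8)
The plan is to read $u_\epsilon(\cdot)$ off the weak equation \eqref{Eq55_New}, after inserting the integration-by-parts identity \eqref{Eq39} and expanding the Stratonovich correction via $\partial_x(\sigma_{k,\epsilon}q(u_\epsilon)) = (\partial_x\sigma_{k,\epsilon})q(u_\epsilon) + \sigma_{k,\epsilon}q'(u_\epsilon)\partial_x u_\epsilon$, as the $W^{-3,\mu}(\TT)$-valued identity
\begin{align*}
u_\epsilon(\cdot) \,=\,& u_{0,\epsilon} \,+\, \sum_{l=0}^{2} c_l\, \partial_x^{l+1}\int_0^\cdot u_\epsilon^{n-2+l}(\partial_x u_\epsilon)^{3-l}\, dt \\
&+\, \sum_{l=3}^{4} c_l\, \partial_x\int_0^\cdot H_l(u_\epsilon)\, dt \,-\, \partial_x\sum_{k\in\ZZ}\int_0^\cdot \sigma_{k,\epsilon}q(u_\epsilon)\, d\beta^{(k)}_t,
\end{align*}
where $H_3(u_\epsilon) = \sum_{k\in\ZZ} \sigma_{k,\epsilon}^2(q'(u_\epsilon))^2\partial_x u_\epsilon$, $H_4(u_\epsilon) = \sum_{k\in\ZZ}\sigma_{k,\epsilon}(\partial_x\sigma_{k,\epsilon})q(u_\epsilon)q'(u_\epsilon)$ and the $c_l$ are numerical constants. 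Since $\nu > 1$ and there are only finitely many summands, $(\sum_j y_j)^\nu \lesssim_\nu \sum_j y_j^\nu$, so it suffices to bound $\EE[\,\|\cdot\|^\nu_{W^{\gamma,\frac{2\nu}{2-\nu}}(0,T;W^{-3,\mu}(\TT))}\,|\,\mathfrak{F}_0]$ for each summand on the right separately; note $\nu < \tfrac{7}{n+4} < 2$, so the target exponent $\tfrac{2\nu}{2-\nu}$ is finite and exceeds $\nu$.

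With $p = \max\{\mu(n+2),\nu(n+4)\}$, the hypotheses $1<\nu<\tfrac{7}{n+4}$ and $\tfrac{n+4}{n+2}<\mu<\tfrac{7}{n+2}$ give $n+4 < p < 7$. I would then set $\nu_l = \tfrac{p}{n+4-l}$ for $l\in\{0,1,2\}$, $\nu_3 = \tfrac{p}{n}$, $\nu_4 = \tfrac{p}{n-1}$, $\nu_5 = \tfrac{2p}{n}$, and pick any $\gamma_5\in(\gamma,\tfrac12)$. Elementary arithmetic with $n+4<p<7$ and $2<n<3$ shows that each $\nu_l$ ($l\le4$) lies in the admissible range of Lemma \ref{Lemma_reg_tf_int_New}, resp. Lemma \ref{Lemma_reg_strat_int_New}, that $\nu_5\in(\tfrac{2(n+4)}{n},\tfrac{14}{n})$ as required for Lemma \ref{Lemma_reg_stoch_int_New}, and that $\nu\le\nu_l$ for $l\le4$ and $\nu<2\le\nu_5$. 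With these choices the $l$-th thin-film summand ($l=0,1,2$) lies in $W^{1,\nu_l}(0,T;W^{-(l+1),\nu_l}(\TT))$, each Stratonovich summand in $W^{1,\nu_l}(0,T;W^{-1,\nu_l}(\TT))$, and — using Lemma \ref{Lemma_reg_stoch_int_New} together with the boundedness of $\partial_x\colon L^2(\TT)\to W^{-3,\mu}(\TT)$ (which follows from $W^{3,\mu'}(\TT)\hookrightarrow H^1(\TT)$ in one dimension) — the stochastic summand in $W^{\gamma_5,\nu_5}(0,T;W^{-3,\mu}(\TT))$; moreover each of these norms is controlled by the corresponding conditional-expectation estimate of Lemma \ref{Lemma_reg_tf_int_New}, \ref{Lemma_reg_strat_int_New}, resp. \ref{Lemma_reg_stoch_int_New}.

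It then remains to embed all these spaces into the common target $W^{\gamma,\frac{2\nu}{2-\nu}}(0,T;W^{-3,\mu}(\TT))$ and chase the constants. In space, $W^{-(l+1),\nu_l}(\TT)\hookrightarrow W^{-3,\mu}(\TT)$: for $l=2$ this needs $\mu\le\nu_2=\tfrac{p}{n+2}$, which is exactly guaranteed by $p\ge\mu(n+2)$ — this is the reason the factor $\mu(n+2)$ enters $p$; for $l\le1$ and for the Stratonovich terms there is at least one derivative of slack and one uses $W^{3,\mu'}(\TT)\hookrightarrow W^{2,\infty}(\TT)$. In time, since $\nu_l>\nu$ and $\gamma<\tfrac12$ the strict scaling inequality $1-\tfrac{1}{\nu_l}>\gamma-\tfrac{2-\nu}{2\nu}$ holds, which yields the fractional Sobolev embedding $W^{1,\nu_l}(0,T)\hookrightarrow W^{\gamma,\frac{2\nu}{2-\nu}}(0,T)$ on the bounded interval $[0,T]$, even when $\tfrac{2\nu}{2-\nu}>\nu_l$; for the stochastic summand one uses instead $\gamma<\gamma_5$ together with $\tfrac{2\nu}{2-\nu}\le\nu_5$. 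Applying these embeddings, raising to the power $\nu$, conditioning on $\mathfrak{F}_0$ and invoking conditional Jensen (legitimate since $\nu\le\nu_l$, resp. $\nu\le\nu_5$), the conditional $\nu$-th moment of each summand is dominated by $\EE[\|\cdot\|^{\nu_l}_{W^{1,\nu_l}(0,T;L^{\nu_l}(\TT))}\,|\,\mathfrak{F}_0]^{\nu/\nu_l}$, resp. $\EE[\|\cdot\|^{\nu_5}_{W^{\gamma_5,\nu_5}(0,T;L^2(\TT))}\,|\,\mathfrak{F}_0]^{\nu/\nu_5}$, into which Lemmas \ref{Lemma_reg_tf_int_New}, \ref{Lemma_reg_strat_int_New}, \ref{Lemma_reg_stoch_int_New} plug directly. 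The choices $\nu_l(n+4-l)=p$, $n\nu_3=p$, $(n-1)\nu_4=p$, $\tfrac{n}{2}\nu_5=p$ are made precisely so that the resulting highest power of $\|u_0\|_{\MM(\TT)}+\epsilon$ equals $(n+1)\nu$ for the thin-film and $l=4$ summands and is strictly smaller for the others, while the lowest power equals $(n-1-\tfrac{n^2}{p})\nu$ for the $l=3$ summand and is strictly larger for the others; the requisite verifications reduce to $n(4-l)\le 2p$, $n-1\le n$ and $p<\tfrac{n^2}{n-2}$, all valid for $2<n<3$ and $n+4<p<7$. Finally the constant-in-time summand $u_{0,\epsilon}$ contributes $T^{(2-\nu)/(2\nu)}\|u_{0,\epsilon}\|_{W^{-3,\mu}(\TT)}\lesssim_T\|u_{0,\epsilon}\|_{\MM(\TT)}\le\|u_0\|_{\MM(\TT)}+\epsilon$ by \eqref{Eq25_New} and $\MM(\TT)\hookrightarrow W^{-3,\mu}(\TT)$, whose $\nu$-th power again lies between $(\|u_0\|_{\MM(\TT)}+\epsilon)^{(n-1-n^2/p)\nu}$ and $(\|u_0\|_{\MM(\TT)}+\epsilon)^{(n+1)\nu}$ because $n-1-\tfrac{n^2}{p}\le1\le n+1$. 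Summing over the finitely many summands yields \eqref{Eq75_New2}.

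The step I expect to be most delicate is the last one: the whole argument rests on a single consistent choice of the exponents $\nu_0,\dots,\nu_5,\gamma_5$ for which the admissibility ranges of Lemmas \ref{Lemma_reg_tf_int_New}--\ref{Lemma_reg_stoch_int_New}, the spatial and temporal embeddings into $W^{\gamma,\frac{2\nu}{2-\nu}}(0,T;W^{-3,\mu}(\TT))$, and the matching of the powers of $\|u_0\|_{\MM(\TT)}+\epsilon$ all hold simultaneously; it is exactly at this point that both the definition $p=\max\{\mu(n+2),\nu(n+4)\}$ and the restriction $n\in(2,3)$ are used.
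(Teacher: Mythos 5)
Your proposal is correct and follows essentially the same route as the paper's proof: the same decomposition of $u_\epsilon$, the same auxiliary exponents $\nu_0,\dots,\nu_5$ tied to $p=\max\{\mu(n+2),\nu(n+4)\}$, the same temporal and spatial Sobolev embeddings, conditional Jensen, and the same power-matching at the end (which the paper organizes via a comparison of three parabolas, amounting to your explicit inequalities). One cosmetic slip: the maximal power $(n+1)\nu$ is attained only by the thin-film summands, the $l=4$ summand contributing at most $(n-1)\nu$, but this does not affect the bound.
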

		\begin{proof}
			By Consequence \ref{Consequences_Ass_1} \ref{Item_Consequences_6} and \eqref{Eq39}, the equality 
		\begin{align}\begin{split}
				\label{Eq208_New}
				u_\epsilon\,=\,& u_{0,\epsilon}
				\,-\, \tfrac{n(n-1)}{2}\int_0^\cdot\partial_x(u_\epsilon^{n-2}(\partial_x u_\epsilon)^3)\,dt
				\,+\, \tfrac{3n}{2}\int_0^\cdot\partial_x^2 (u_\epsilon^{n-1} (\partial_x u_\epsilon)^2)\,dt
				\,-\, \int_0^\cdot\partial_x^3(u_\epsilon^{n}\partial_x u_\epsilon)\,dt
				\\&+\,\tfrac{1}{2}\sum_{k\in \ZZ}\int_0^\cdot  
				\partial_x (\sigma_{k,\epsilon}^2 (q'(u_\epsilon))^2 \partial_x u_\epsilon)\, dt
				\,+\,\tfrac{1}{2}\sum_{k\in \ZZ}\int_0^\cdot  
				\partial_x (\sigma_{k,\epsilon}\partial_x \sigma_{k,\epsilon} q(u_\epsilon) q'(u_\epsilon) )\, dt
				\\&+\, \sum_{k\in \ZZ} \int_0^\cdot \partial_x(\sigma_{k,\epsilon} q(u_\epsilon))\, d\beta^{(k)}_t
			\end{split}
		\end{align}	
	holds almost surely, where the integrals on the right hand side converge in suitable negative Sobolev-spaces as a consequence of Lemma \ref{Lemma_reg_tf_int_New}, Lemma \ref{Lemma_reg_strat_int_New} and Lemma \ref{Lemma_reg_stoch_int_New}. We proceed by estimating the $W^{\gamma, \frac{2\nu}{2-\nu}}(0,T;W^{-3,\mu}(\TT))$-norm of each of the terms on the right hand-side of \eqref{Eq208_New} separately. Since $u_{0,\epsilon}$ is constant in time, we can estimate by the Sobolev-embedding theorem and \eqref{Eq25_New}
	\begin{equation}\label{Eq239}
	\|u_{0,\epsilon}\|_{W^{\gamma, \frac{2\nu}{2-\nu}}(0,T; W^{-3,\mu}(\TT))}\,\lesssim_\mu\,
	\|u_{0,\epsilon}\|_{L^{\frac{2\nu}{2-\nu}}(0,T;L^1(\TT))}\,\lesssim_{\nu, T}\,\|u_{0,\epsilon}\|_{L^1(\TT)}\,\le\,\|u_0\|_{\MM(\TT)}\,+\,\epsilon.
	\end{equation}
	For the remaining terms, we choose
	\begin{align}\begin{split}
			\label{Eq232}
			&\nu_l \,=\, \tfrac{p}{n+4-l},\quad l\in \{0,1,2\},
			\\&\nu_l\,=\, \tfrac{p}{n+3-l},\quad l\in \{3,4\},
			\\&\nu_5\,=\, \tfrac{2p}{n}, \quad\gamma_5=\gamma,
		\end{split}
	\end{align} where $p$ is defined in the claim. In particular, we have $\nu_l\ge \nu$  and therefore 
\begin{equation}\label{Eq209_New}
	1-\tfrac{1}{\nu_l}\,\ge \, 1-\tfrac{1}{\nu}\,=\, \tfrac{1}{2}-\tfrac{2-\nu}{2\nu}\,>\, \gamma-\tfrac{2-\nu}{2\nu},\quad l\in\{0,\dots, 4\}.
\end{equation}
Using additionally that $\nu_2\ge \mu$ and employing the Sobolev embedding theorem in time and space, we obtain that
\begin{align}\begin{split}
		\label{Eq235}
	\biggl\| \int_0^\cdot \partial_x^{l+1} (u_\epsilon^{n-2+l} (\partial_x u_\epsilon)^{3-l})\, dt \biggr\|_{W^{\gamma,\frac{2\nu}{2-\nu}}( 0,T;W^{-3,\mu}(\TT)) }\,& \le \,
	\biggl\| \int_0^\cdot u_\epsilon^{n-2+l} (\partial_x u_\epsilon)^{3-l}\, dt \biggr\|_{W^{\gamma,\frac{2\nu}{2-\nu}}( 0,T;W^{l-2,\mu}(\TT)) } \\&\lesssim_{l,\gamma,\mu,\nu,\nu_l,T}\,
	\biggl\| \int_0^\cdot  u_\epsilon^{n-2+l} (\partial_x u_\epsilon)^{3-l}\, dt \biggr\|_{W^{1,\nu_l}( 0,T;L^{\nu_l}(\TT)) }
	\end{split}
\end{align}
for $l\in \{0,1,2\}$. 	For $l\in \{3,4\}$, we proceed similarly and use again \eqref{Eq209_New} and the Sobolev embedding to conclude
\begin{align}\begin{split}\label{Eq236}
		&
	\biggl\| \sum_{k\in \ZZ}\int_0^\cdot  
	\partial_x (\sigma_{k,\epsilon}^2 (q'(u_\epsilon))^2 \partial_x u_\epsilon)\, dt \biggr\|_{W^{\gamma,\frac{2\nu}{2-\nu}}( 0,T;W^{-3,\mu}(\TT)) }\\&\quad\lesssim_{\gamma,\mu,\nu,\nu_3,T}\,
	\biggl\| \sum_{k\in \ZZ}\int_0^\cdot  
	\sigma_{k,\epsilon}^2 (q'(u_\epsilon))^2 \partial_x u_\epsilon\, dt \biggr\|_{W^{1,\nu_3}( 0,T;L^{\nu_3}(\TT)) }
	\end{split}
\end{align}
and 
\begin{align}\begin{split}\label{Eq237}&
	\biggl\| \sum_{k\in \ZZ}\int_0^\cdot  
	\partial_x (\sigma_{k,\epsilon}\partial_x \sigma_{k,\epsilon} q(u_\epsilon) q'(u_\epsilon) )\, dt \biggr\|_{W^{\gamma,\frac{2\nu}{2-\nu}}( 0,T;W^{-3,\mu}(\TT)) }\\&\quad\lesssim_{\gamma,\mu,\nu,\nu_4,T}\,
	\biggl\|  \sum_{k\in \ZZ}\int_0^\cdot  
	\sigma_{k,\epsilon}\partial_x \sigma_{k,\epsilon} q(u_\epsilon) q'(u_\epsilon) \, dt \biggr\|_{W^{1,\nu_4}( 0,T;L^{\nu_4}(\TT)) }.
	\end{split}
\end{align}
Lastly, we observe that by Consequence \ref{Consequences_Ass_1} \ref{Item_Consequences_1} and $\nu<\frac{7}{n+4}$, it also holds $\nu_5\ge \frac{2(n+4)\nu}{n}\ge \frac{2\nu}{2-\nu}$. Hence, by the Sobolev embedding theorem, we infer
\begin{align}\begin{split}\label{Eq238}&
	\biggl\|\sum_{k\in \ZZ} \int_0^\cdot \partial_x(\sigma_{k,\epsilon} q(u_\epsilon))\, d\beta^{(k)}_t \biggr\|_{W^{\gamma,\frac{2\nu}{2-\nu}}( 0,T;W^{-3,\mu}(\TT))}\\&\quad \lesssim_{\gamma,\mu,\nu,\nu_5}\,
	\biggl\|
	\sum_{k\in \ZZ} \int_0^\cdot \sigma_{k,\epsilon} q(u_\epsilon)\, d\beta^{(k)}_t
	\biggr\|_{W^{\gamma_5,\nu_5}(0,T; L^2(\TT))}.
\end{split}
\end{align}
Employing the triangle inequality in $W^{\gamma, \frac{2\nu}{2-\nu}}(0,T;W^{-3,\mu}(\TT))$ and the conditional Minkowski inequality in \eqref{Eq208_New} yields
\begin{align}&
	\EE\Bigl[
	\|
	u_\epsilon
	\|^{\nu}_{W^{\gamma, \frac{2\nu}{2-\nu}}(0,T;W^{-3,\mu}(\TT))}\,\Big|\, \mathfrak{F}_0
	\Bigr]^{\frac{1}{\nu}}\, \lesssim_{n} \,
	\EE\Bigl[\|u_{0,\epsilon}\|_{W^{\gamma, \frac{2\nu}{2-\nu}}(0,T;W^{-3,\mu}(\TT))}^\nu\,\Big|\,\mathfrak{F}_0
	\Bigr]^{\frac{1}{\nu}}
	\\& \qquad+\,
	\sum_{l=0}^2\EE\biggl[
	\biggl\| \int_0^\cdot  \partial_x^{l+1}(u_\epsilon^{n-2+l} (\partial_x u_\epsilon)^{3-l})\, dt \biggr\|_{W^{\gamma, \frac{2\nu}{2-\nu}}(0,T;W^{-3,\mu}(\TT))}^{\nu}\,\bigg|\,\mathfrak{F}_0
	\biggr]^\frac{1}{\nu}
	\\& \qquad
	+\, \EE\biggl[
	\biggl\| \sum_{k\in \ZZ}\int_0^\cdot \partial_{x}( 
	\sigma_{k,\epsilon}^2 (q'(u_\epsilon))^2 \partial_x u_\epsilon)\, dt \biggr\|^{\nu}_{W^{\gamma, \frac{2\nu}{2-\nu}}(0,T;W^{-3,\mu}(\TT))} \,\bigg|\,\mathfrak{F}_0
	\biggr]^\frac{1}{\nu}
	\\& \qquad
	+\, \EE\biggl[
	\biggl\|  \sum_{k\in \ZZ}\int_0^\cdot \partial_x (
	\sigma_{k,\epsilon}\partial_x \sigma_{k,\epsilon} q(u_\epsilon) q'(u_\epsilon)) \, dt \biggr\|^{\nu}_{W^{\gamma, \frac{2\nu}{2-\nu}}(0,T;W^{-3,\mu}(\TT))}\,\bigg|\,\mathfrak{F}_0
	\biggr]^\frac{1}{\nu}
	\\& \qquad
	+\, \EE\biggl[  \biggl\|
	\sum_{k\in \ZZ} \int_0^\cdot \partial_x (\sigma_{k,\epsilon} q(u_\epsilon))\, d\beta^{(k)}_t
	\biggr\|^{\nu}_{W^{\gamma, \frac{2\nu}{2-\nu}}(0,T;W^{-3,\mu}(\TT))}\,\bigg|\,\mathfrak{F}_0
	\biggr]^\frac{1}{\nu}.
\end{align}
The estimates \eqref{Eq239}, \eqref{Eq235}, \eqref{Eq236}, \eqref{Eq237} and \eqref{Eq238} and the conditional Jensen inequality lead to
\begin{align}&
	\EE\Bigl[
	\|
	u_\epsilon
	\|^{\nu}_{W^{\gamma, \frac{2\nu}{2-\nu}}(0,T;W^{-3,\mu}(\TT))}\,\Big|\, \mathfrak{F}_0
	\Bigr]^{\frac{1}{\nu}}\, \lesssim_{\gamma,n,\mu,\nu,T} \,
	\EE\bigl[\bigl(\|u_0\|_{\MM(\TT)}\,+\, \epsilon\bigr)^\nu\,\big|\, \mathfrak{F}_0
	\bigr]^{\frac{1}{\nu}}
	\\& \qquad+\,
	\sum_{l=0}^2\EE\biggl[
	\biggl\| \int_0^\cdot  u_\epsilon^{n-2+l} (\partial_x u_\epsilon)^{3-l}\, dt \biggr\|_{W^{1,\nu_l}( 0,T;L^{\nu_l}(\TT)) }^{\nu_l}\,\bigg|\, \mathfrak{F}_0
	\biggr]^\frac{1}{\nu_l}
	\\& \qquad
	+\, \EE\biggl[
	\biggl\| \sum_{k\in \ZZ}\int_0^\cdot  
	\sigma_{k,\epsilon}^2 (q'(u_\epsilon))^2 \partial_x u_\epsilon\, dt \biggr\|_{W^{1,\nu_3}( 0,T;L^{\nu_3}(\TT)) }^{\nu_3}\,\bigg|\, \mathfrak{F}_0
	\biggr]^\frac{1}{\nu_3}
	\\& \qquad
	+\, \EE\biggl[
	\biggl\|  \sum_{k\in \ZZ}\int_0^\cdot  
	\sigma_{k,\epsilon}\partial_x \sigma_{k,\epsilon} q(u_\epsilon) q'(u_\epsilon) \, dt \biggr\|_{W^{1,\nu_4}( 0,T;L^{\nu_4}(\TT)) }^{\nu_4}\,\bigg|\, \mathfrak{F}_0
	\biggr]^\frac{1}{\nu_4}
	\\& \qquad
	+\, \EE\biggl[  \biggl\|
	\sum_{k\in \ZZ} \int_0^\cdot \sigma_{k,\epsilon} q(u_\epsilon)\, d\beta^{(k)}_t
	\biggr\|_{W^{\gamma_5,\nu_5}(0,T; L^2(\TT))}^{\nu_5}\,\bigg|\, \mathfrak{F}_0
	\biggr]^\frac{1}{\nu_5},
\end{align}
where we also used that the $\nu_l$  only depend on  $n$, $\nu$, $\mu$.
Since $p\in (n+4,7)$,  the parameters from \eqref{Eq232} satisfy the assumptions from Lemma \ref{Lemma_reg_tf_int_New}, Lemma \ref{Lemma_reg_strat_int_New} and Lemma  \ref{Lemma_reg_stoch_int_New}. Hence, using \eqref{Eq62_New}, \eqref{Eq229}, \eqref{Eq231} and \eqref{Eq233} as well as that , we obtain that
\begin{align}\begin{split}
		\label{Eq75_New}&
		\EE\Bigl[
		\left\|
		u_\epsilon
		\right\|^{\nu}_{W^{\gamma, \frac{2\nu}{2-\nu}}(0,T;W^{-3,\mu}(\TT))}
		\,\Big|\, \mathfrak{F}_0 \Bigr]^{\frac{1}{\nu}}\,\lesssim_{\gamma, n, \mu,\nu, \Lambda, T}\, \bigl(
		\|u_0\|_{\MM(\TT)}+\epsilon
		\bigr)
		\\&\qquad +\,\sum_{l=0}^2\Bigl[ (\|u_0\|_{\MM(\TT)}+\epsilon)^{4-\nu_l(3-l)}
		\bigl(\|u_0\|_{\MM(\TT)}^{\nu_l(n+4-l)-n-4}+ \|u_0\|_{\MM(\TT)}^{\nu_l(n+4-l)-4}+\epsilon^{\nu_l(n+4-l)-n-4}\bigr)\Bigr]^{\frac{1}{\nu_l}}\\&\qquad +\,
		\Bigl[	(\|u_0\|_{\MM(\TT)}+\epsilon)^{4-\nu_3}
		\bigl(
		\|u_0\|_{\MM(\TT)}^{n\nu_3-n-4}+\|u_0\|_{\MM(\TT)}^{n\nu_3-4}+\epsilon^{n\nu_3-n-4}
		\bigr)\Bigr]^\frac{1}{\nu_3}	\\&\qquad +\,
		\Bigl[	(\|u_0\|_{\MM(\TT)}+\epsilon)^{4}
		\bigl(
		\|u_0\|_{\MM(\TT)}^{(n-1)\nu_4-n-4}+\|u_0\|_{\MM(\TT)}^{(n-1)\nu_4-4}+\epsilon^{(n-1)\nu_4-n-4}
		\bigr)\Bigr]^\frac{1}{\nu_4} 	\\&\qquad +\,
		\Bigl[
		(\|u_0\|_{\MM(\TT)}+\epsilon)^4 \Bigl(
		\|u_0\|_{\MM(\TT)}^{\frac{n\nu_5}{2}-n-4} +\|u_0\|_{\MM(\TT)}^{\frac{n\nu_5}{2}-4}+\epsilon^{\frac{n\nu_5}{2}-n-4}
		\Bigr)
		\Bigr]^\frac{1}{\nu_5}.
	\end{split}
\end{align}
To simplify the right-hand side, we  estimate $\epsilon$ and $\|u_0\|_{\MM(\TT)}$ in \eqref{Eq75_New} by $(\epsilon+\|u_0\|_{\MM(\TT)})$ to conclude 
\begin{align}\begin{split}\label{Eq244}
		&
		\EE\Bigl[
		\left\|
		u_\epsilon
		\right\|^{\nu}_{W^{\gamma, \frac{2\nu}{2-\nu}}(0,T;W^{-3,\mu}(\TT))}
		\,\Big|\, \mathfrak{F}_0 \Bigr]^\frac{1}{\nu}\,\lesssim_{\gamma, n, \mu,\nu, \Lambda, T}\, \bigl(
		\|u_0\|_{\MM(\TT)}+\epsilon
		\bigr)
		\\&\qquad +\,\sum_{l=0}^2 (\|u_0\|_{\MM(\TT)}+\epsilon)^{n+1-\frac{n}{\nu_l}}\,+\, (\|u_0\|_{\MM(\TT)}+\epsilon)^{n+1} \\&\qquad +\,
		(\|u_0\|_{\MM(\TT)}+\epsilon)^{n-1-\frac{n}{\nu_3}}\,+\, (\|u_0\|_{\MM(\TT)}+\epsilon)^{n-1}
		\\&\qquad +\,	(\|u_0\|_{\MM(\TT)}+\epsilon)^{n-1-\frac{n}{\nu_4}}\,+\, 
		(\|u_0\|_{\MM(\TT)}+\epsilon)^{n-1}\		\\&\qquad +\,	(\|u_0\|_{\MM(\TT)}+\epsilon)^{\frac{n}{2}-\frac{n}{\nu_5}}\,+\, 
		(\|u_0\|_{\MM(\TT)}+\epsilon)^{\frac{n}{2}},
	\end{split}
\end{align}
We notice that the largest power on the right-hand side of \eqref{Eq244} is $n+1$. 
By Consequence \ref{Consequences_Ass_1} \ref{Item_Consequences_1}, the smallest power is either $1$, $n-1-\frac{n}{\nu_3}$ or $\frac{n}{2}-\frac{n}{\nu_5}$. To find the smallest one, we insert \eqref{Eq232} to rewrite the powers to
\begin{equation}\label{Eq245}1,\qquad
	n-1-\tfrac{n}{\nu_3}\,=\, n-1-\tfrac{n^2}{p},\qquad 
	\tfrac{n}{2}-\tfrac{n}{\nu_5}\,=\,\tfrac{n}{2}-\tfrac{n^2}{2p}
\end{equation}
and consider the respective parabolas
\begin{equation}\label{Eq245_New}g_1(x)\,=\,1,\qquad
g_2(x)\,=\, x-1-\tfrac{x^2}{p},\qquad 
	g_3(x)\,=\,\tfrac{x}{2}-\tfrac{x^2}{2p}.
\end{equation}
We notice that all three parabolas attain their maximum value at $\frac{p}{2}$, with 
\[g_1(\tfrac{p}{2})
\,=\, 1\,\ge \, g_3(\tfrac{p}{2})\,=\,\tfrac{p}{8}\,\ge \, g_2(\tfrac{p}{2})\,=\,\tfrac{p}{4}-1,
\]
since $p\le 8$. Because the second derivatives of the parabolas obey the same ordering, we conclude $g_1(x)\ge g_3(x)\ge g_2(x)$ for all $x\in \RR$ and in particular that $n-1-\tfrac{n^2}{p}$ is the smallest power in \eqref{Eq244}. Whence,
\[
\EE\Bigl[
\left\|
u_\epsilon
\right\|^{\nu}_{W^{\gamma, \frac{2\nu}{2-\nu}}(0,T;W^{-3,\mu}(\TT))}
\,\Big|\, \mathfrak{F}_0 \Bigr]^\frac{1}{\nu}\,\lesssim_{\gamma, n, \mu,\nu, \Lambda, T} (\|u_0\|_{\MM(\TT)}+\epsilon)^{n-1-\frac{n^2}{p}} \,+\,(\|u_0\|_{\MM(\TT)}+\epsilon)^{n+1} 
\]
and raising both sides of the preceding inequality to the $\nu$-th power yields \eqref{Eq75_New2}.
\end{proof}
	\subsection{Simplified Estimates} 
	In the previous subsections, we derived uniform estimates on the conditional expectations of the approximate solutions $(u_\epsilon)_{\epsilon\in (0,1)}$. To work with these estimates efficiently in the preceding section, we derive corresponding moment estimates with a simpler right-hand side. To this end, we introduce the sets
	\begin{equation}\label{Eq98}
				A_j\,=\, \bigl\{
				\|u_0\|_{\MM(\TT)}\in [j-1,j)
				\bigr\},\;\; j\in \NN,
			\end{equation}
		providing an $\mathfrak{F}_0$-measurable partition of the probability space $\Omega$ and point out that it suffices to show tightness on each of the sets $A_j$ separately in light of Lemma \ref{Lemma_localized_tightness}. Using that
		\begin{equation}\label{Eq249}
			\|u_0\|_{\MM(\TT)}+\epsilon \,\le \, j\,+\,1
		\end{equation}
		on $A_j$, we obtain by multiplying	\eqref{Eq13_New}  with $\mathbbm{1}_{A_j}$ and taking the expectation 
	\begin{align}\begin{split}&\label{Eq253}
			\forall \alpha\in (-1,2-n):\\&\quad 
				\EE\biggl[\mathbbm{1}_{A_j}
				\int_0^{T}\int_{\TT} u_\epsilon^{\alpha+n-1}
				(\partial_x^2{u_\epsilon} )^2 \,dx\,ds\,+\, \mathbbm{1}_{A_j}
				\int_0^{T}\int_{\TT}
				u_\epsilon^{\alpha+n-3}(\partial_x u_\epsilon)^4  \,dx\,ds
				\biggr]\,\quad 
				\lesssim_{\alpha, n,\Lambda,T} \, (j+1)^{\alpha+n-1}.
			\end{split}
		\end{align}
		In the same way, we conclude from Lemma \ref{lemma_int_p}  that
		\begin{equation}\label{Eq240}
			\forall p\in [1,7):\quad
			\EE\bigl[\mathbbm{1}_{A_j}\|u_\epsilon\|_{L^p([0,T]\times\TT)}^p\bigr]\,
			\lesssim_{ n,p,\Lambda, T}\,
			(j+1)^p,
		\end{equation} 
where the fact that the inequality also holds for $p\in [1,n+4]$ follows by H\"older's inequality. Analogously, we conclude from Lemma \ref{Lemma_int_r} that 
	\begin{equation}\label{Eq251}
		\forall r\in [1,\tfrac{7}{2}):\quad 
		\EE\bigl[\mathbbm{1}_{A_j}\|\partial_x u_\epsilon\|_{L^r([0,T]\times\TT)}^r\bigr]\,
		\lesssim_{ n,r,\Lambda, T}\,
		(j+1)^r.
	\end{equation}
Using additionally the Sobolev embedding theorem in space, we conclude from Lemma \ref{Lemma_temporal_regularity_New} 
\begin{align}\begin{split}\label{Eq250}
		&
	\forall \gamma\in (0,\tfrac{1}{2})\,\forall \mu\in (1, \tfrac{7}{n+2})\, \forall \nu\in [1, \tfrac{7}{n+4}):\\&\quad 
	\EE\Bigl[	\mathbbm{1}_{A_j}
		\left\|
	u_\epsilon
	\right\|^{\nu}_{W^{\gamma, \frac{2\nu}{2-\nu}}(0,T;W^{-3,\mu}(\TT))}
	\Bigr]\,\lesssim_{\gamma, n, \mu,\nu, \Lambda, T}\, (j+1)^{\nu(n+1)}.
	\end{split}
\end{align}
	\section{Limiting procedure}
	In this section,  we construct a martingale solution in the sense of Theorem \ref{thm_main} to the stochastic thin-film equation with initial value $u_0$. To this end, we  show tightness of the approximating family $(u_\epsilon)_{\epsilon\in (0,1)}$ in suitable spaces in Subsection \ref{Sec_tightness} and extract an equidistributed convergent subsequence converging to a solution in Subsection \ref{Sec_convergence}.
	\subsection{Tightness properties}\label{Sec_tightness}
	We define $\Xind=\RR$, $\X_{\text{BM}}=C([0,T])$, $\X_{\text{power}}=L^{2}(0,T; H^2(\TT))$, where we equip the latter space with its weak topology. Moreover, we choose sequences $-1\le \kappa_l\nearrow \frac{-1}{2}$, $1< p_l\nearrow 7$ and $1< r_l\nearrow \frac{7}{2}$. Then, we define the spaces $\X_{\text{cont}}$, $\X_{\text{Lebesgue}}$, $\X_{\text{Sobolev}}$ as the projective limit of the sequences $(C([0,T];H^{\kappa_l}(\TT)))_{l\in \NN}$, $(L^{p_l}([0,T]\times \TT))_{l\in \NN}$ and $(L^{r_l}(0,T; W^{1,r_l}(\TT)))_{l\in \NN}$, where we consider the latter sequence of spaces with their weak typologies, for details see Appendix \ref{Appendix_lcs}. Finally, we define the space 
	\begin{equation}\label{Eq79}
		\X\,=\, \Xii \times \XBMi  \times \Xc \times \XL \times \XS \times \Xpi,
	\end{equation}
	and equip it with the product topology.
	\begin{lemma}\label{Lemma_tightness}Let $\alpha_l \nearrow 2-n$ be a sequence satisfying \eqref{Condition_alpha_New}. Then the family 
		\begin{equation}\label{Eq77}
			\Bigl(	\Bigl( (\mathbbm{1}_{A_j})_{j\in \NN}, (\beta^{(k)})_{k\in \ZZ}, u_\epsilon, u_\epsilon, u_\epsilon, \bigl(u_\epsilon^{\frac{\alpha_l+n+1}{2}}\bigr)_{l\in \NN}\Bigr)\Bigr)_{\epsilon\in (0,1)}
		\end{equation}
		lies tight on $\X$.
	\end{lemma}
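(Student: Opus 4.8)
The plan is to reduce tightness on the product $\X$ to tightness of each coordinate family and to read these off from the simplified estimates \eqref{Eq253}, \eqref{Eq240}, \eqref{Eq251} and \eqref{Eq250}. By Lemma \ref{Lemma_localized_tightness} it suffices to prove tightness of \eqref{Eq77} on each $A_j$ separately, and since $\X$ is a countable product of Polish spaces and of separable reflexive spaces equipped with their weak topology, it is enough to check tightness of each of the six coordinate families. The first two, $(\mathbbm{1}_{A_j})_{j\in\NN}$ and $(\beta^{(k)})_{k\in\ZZ}$, do not depend on $\epsilon$, so their laws are single Borel measures on Polish spaces and hence tight. It thus remains to treat the three copies of $u_\epsilon$ and the sequence $(u_\epsilon^{\frac{\alpha_l+n+1}{2}})_{l\in\NN}$.

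The two weakly topologised factors $\XS$ and $\Xpi$ require only boundedness in probability. From \eqref{Eq251} and Markov's inequality, $(u_\epsilon)_\epsilon$ is bounded in probability on $A_j$ in each $L^{r_l}(0,T;W^{1,r_l}(\TT))$; closed balls in these separable reflexive spaces are weakly compact and metrizable, so the laws of $u_\epsilon$ on $\XS$ are tight and a diagonal argument passes to the projective limit. For $\Xpi$ I would use that $u_\epsilon>0$ a.e.\ by Consequence \ref{Consequences_Ass_1}\ref{Item_Consequences_4}, so that the chain rule gives $\bigl(\partial_x^2 u_\epsilon^{\frac{\alpha_l+n+1}{2}}\bigr)^2\lesssim_{\alpha_l,n} u_\epsilon^{\alpha_l+n-1}(\partial_x^2 u_\epsilon)^2+u_\epsilon^{\alpha_l+n-3}(\partial_x u_\epsilon)^4$, while $\|u_\epsilon^{\frac{\alpha_l+n+1}{2}}\|_{L^2(\TT)}^2=\|u_\epsilon\|_{L^{\alpha_l+n+1}(\TT)}^{\alpha_l+n+1}$ and the gradient term of the $H^2(\TT)$-norm is bounded, via Hölder's inequality, by $\int_\TT u_\epsilon^{\alpha_l+n-3}(\partial_x u_\epsilon)^4\,dx$ and $\int_\TT u_\epsilon^{\alpha_l+n+1}\,dx$, with $\alpha_l+n+1<3$. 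Hence \eqref{Eq253} together with \eqref{Eq240} makes $(u_\epsilon^{\frac{\alpha_l+n+1}{2}})_\epsilon$ bounded in probability on $A_j$ in $L^2(0,T;H^2(\TT))$ for every $l$, and tightness in $\Xpi$ follows by Markov's inequality and reflexivity, passing to the countable product over $l$.

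For the coordinate $\Xc$ the idea is to combine conservation of mass with the fractional-in-time bound. By \eqref{Eq25_New} one has $\|u_\epsilon\|_{L^\infty(0,T;\MM(\TT))}\le j+1$ a.s.\ on $A_j$; since $n<3$ one may pick $\nu\in(1,\tfrac{7}{n+4})$, then $\gamma\in(0,\tfrac12)$ with $\gamma\,\tfrac{2\nu}{2-\nu}>1$, and $\mu$ as in \eqref{Eq250}, so that by \eqref{Eq250} the family $(u_\epsilon)_\epsilon$ is bounded in probability on $A_j$ in $W^{\gamma,\frac{2\nu}{2-\nu}}(0,T;W^{-3,\mu}(\TT))\hookrightarrow C^{0,\delta}([0,T];W^{-3,\mu}(\TT))$ for some $\delta>0$. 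Since $\MM(\TT)$ embeds compactly into $H^{\kappa_l}(\TT)$ for $\kappa_l<-\tfrac12$, and $H^{\kappa_l}(\TT)$ lies, up to continuous embeddings, between $\MM(\TT)$ and $W^{-3,\mu}(\TT)$, an Arzelà–Ascoli argument shows that the sublevel sets of $\|\cdot\|_{L^\infty(0,T;\MM(\TT))}+\|\cdot\|_{W^{\gamma,\frac{2\nu}{2-\nu}}(0,T;W^{-3,\mu}(\TT))}$ are relatively compact in $C([0,T];H^{\kappa_l}(\TT))$; that $u_\epsilon$ itself lies in this space a.s.\ follows from its weak $H^1(\TT)$-continuity, the a.s.\ finiteness of $\sup_{[0,T]}\|u_\epsilon\|_{H^1(\TT)}$ from Consequence \ref{Consequences_Ass_1}\ref{Item_Consequences_5}, and the compact embedding $H^1(\TT)\hookrightarrow H^{\kappa_l}(\TT)$. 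Markov's inequality and a diagonal argument then yield tightness on $\Xc$.

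The remaining coordinate $\XL$ is where I expect the \emph{main obstacle}, since weak compactness is unavailable and one must instead produce strong compactness in a low Lebesgue exponent and boost it. Fix $l$, set $q=\tfrac{2\nu}{2-\nu}$ with $\nu$ as above, and note that by \eqref{Eq251} and Hölder in time $(u_\epsilon)_\epsilon$ is bounded in probability on $A_j$ in $L^q(0,T;W^{1,q}(\TT))$, while by \eqref{Eq250} it is bounded in $W^{\gamma,q}(0,T;W^{-3,\mu}(\TT))$. Using the chain $W^{1,q}(\TT)\hookrightarrow L^1(\TT)\hookrightarrow W^{-3,\mu}(\TT)$ — the first embedding being compact on the one-dimensional torus for $q>1$ — together with the uniform smallness of time-translations in $L^q(0,T-h;W^{-3,\mu}(\TT))$ supplied by the $W^{\gamma,q}$-bound, the Aubin–Lions–Simon compactness lemma gives relative compactness of the corresponding sublevel sets in $L^q(0,T;L^1(\TT))\hookrightarrow L^1([0,T]\times\TT)$. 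Intersecting with the sublevel sets of the uniform $L^{p_{l+1}}([0,T]\times\TT)$-bound from \eqref{Eq240}, $p_{l+1}>p_l$, and applying Vitali's convergence theorem — uniform integrability of $|v-\tilde v|^{p_l}$ being immediate from the uniform $L^{p_{l+1}}$-control and convergence in measure from the $L^1$-convergence — upgrades relative compactness in $L^1([0,T]\times\TT)$ to relative compactness in $L^{p_l}([0,T]\times\TT)$. Markov's inequality and a diagonal argument over $l$ finish tightness on $\XL$, and hence on $\X$. The points genuinely needing care are the exponent bookkeeping: one has to verify that the ranges in \eqref{Eq240}, \eqref{Eq251} and \eqref{Eq250} — all relying on $n\in(2,3)$ — simultaneously permit $\gamma\,\tfrac{2\nu}{2-\nu}>1$ with $\nu>1$ as used for $\Xc$, and allow the low-exponent Aubin–Lions–Simon output for $\XL$ to be boosted against the uniform $L^{p_{l+1}}$-bound up to an exponent $p_l$ arbitrarily close to $7$.
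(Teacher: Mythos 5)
Your proposal is correct and follows essentially the same route as the paper: componentwise reduction via product tightness and localization to the sets $A_j$, the moment bounds \eqref{Eq253}, \eqref{Eq240}, \eqref{Eq251}, \eqref{Eq250} combined with Chebyshev's inequality, Banach--Alaoglu for the weakly topologised factors $\XS$ and $\Xpi$, and an Aubin--Lions--Simon-type compactness for $\Xc$ and $\XL$ with the boost to $L^{p_l}$ against the uniform $L^{p_{l+1}}$-bound. The only differences are cosmetic implementation choices (H\"older embedding plus Arzel\`a--Ascoli and a Vitali boost where the paper cites Simon's Corollary~5 directly and interpolates Lebesgue norms; H\"older for the gradient term of the $H^2$-norm of the powers where the paper uses Parseval), and your exponent bookkeeping for $\gamma$, $\nu$, $q$ checks out.
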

	\begin{proof}
		By Lemma \ref{Lemma_prod_tightness} it suffices to show tightness of each of the components of \eqref{Eq77} in their respective space separately. 
		
		\textit{Tightness of the indicator functions and Brownian motions.} The set of real numbers $\Xind$ is a Radon space and thus the law of $\mathbbm{1}_{A_j}$ is inner regular on open sets for every $j\in \NN$. Consequently, the family $(\mathbbm{1}_{A_j})_{\epsilon\in (0,1)}$ lies tight on $\Xind$. Tightness of the sequence $((\mathbbm{1}_{A_j})_{j\in \NN})_{\epsilon\in (0,1)}$ on $\Xii$ follows by Lemma \ref{Lemma_prod_tightness}.
		Similarly, since  $\XBM$ is a Radon space, the law of $\beta^{(k)}$ is inner regular on open sets, and consequently the family  $(\beta^{(k)})_{\epsilon\in (0,1)}$ lies tight on it. Another application of Lemma \ref{Lemma_prod_tightness} yields tightness of $((\beta^{(k)})_{k\in \mathbb{Z}})_{\epsilon\in (0,1)}$ on $\XBMi$.
		
		\textit{Tightness on $\Xc$.} By Lemma \ref{Tightness_Proj_Limit} and Lemma \ref{Lemma_localized_tightness} it suffices to show tightness of $(\mathbbm{1}_{A_j}u_\epsilon)_{\epsilon\in (0,1)}$ on $C([0,T];H^{\kappa_l}(\TT))$ for each $j,l\in \NN$. To this end, we choose $\gamma\in(0,\frac{1}{2})$, $\mu\in (1,\frac{7}{n+2})$ and $\nu\in [1,\frac{7}{n+4})$ such that $\gamma-\frac{2-\nu}{2\nu}>0$ and in particular the embedding 
		\[
		L^\infty(0,T;L^1(\TT))\cap W^{\gamma, \frac{2\nu}{2-\nu}}(0,T;W^{-3,\mu}(\TT)) \,\hookrightarrow\, C([0,T]; H^{\kappa_l}(\TT))
		\]
		is compact by \cite[Corollary 5]{Simon1986} and the Rellich–Kondrachov theorem. Consequently, the set 
			\[
		K_\delta \,=\, \Bigl\{u\,\Big|\,  \|u\|_{ L^\infty(0,T; L^1(\TT))}\le \tfrac{1}{\delta} , \, \|u\|_{W^{\gamma, \frac{2\nu}{2-\nu}}(0,T;W^{-3,\mu}(\TT))}\, \le \tfrac{1}{\delta}
		\Bigr\}
		\]
		lies compact in $C([0,T];H^{\kappa_l}(\TT))$ for $\delta>0$. By \eqref{Eq25_New} and \eqref{Eq249}, we have that
		 \[\EE\bigl[ \mathbbm{1}_{A_j}\|u_\epsilon\|_{L^\infty(0,T; L^1(\TT))} \bigr]\,\le\, j+1\]
		and hence together with \eqref{Eq250} and Chebychev's inequality, we conclude that
		\begin{align}\begin{split}
				\label{Eq78}
				\PP(\{
				\mathbbm{1}_{A_j}u_\epsilon\notin K_\delta 
				\})\,& \le \,
				\PP \bigl(\bigl\{
				\mathbbm{1}_{A_j}\|u_\epsilon \|_{L^\infty(0,T; L^1(\TT))}>\tfrac{1}{\delta}
				\bigr\}\bigr) \,+\, 
				\PP \Bigl(\Bigl\{
				\mathbbm{1}_{A_j}\|u_\epsilon\|_{ W^{\gamma, \frac{2\nu}{2-\nu}}(0,T;W^{-3,\mu}(\TT))}>\tfrac{1}{\delta}
				\Bigr\}\Bigr)
				\\&\le\,\delta \EE\bigl[
				\mathbbm{1}_{A_j}\|u_\epsilon \|_{L^\infty(0,T; L^1(\TT))}
				\bigr]\,+\, \delta^{\nu} \EE\Bigl[	\mathbbm{1}_{A_j}\|u_\epsilon\|_{ W^{\gamma, \frac{2\nu}{2-\nu}}(0,T;W^{-3,\mu}(\TT))}^\nu
				\Bigr]
				\\&\lesssim_{\gamma, n,\mu, \nu, \Lambda, T}\, \delta(j+1) \,+\, \delta^{\nu} (j+1)^{\nu(n+1)}.
			\end{split}
		\end{align}
		Tightness follows, since the right-hand side tends to $0$ as $\delta\searrow 0$.
		
		\textit{Tightness on $\XL$.} By Lemma \ref{Tightness_Proj_Limit} and Lemma \ref{Lemma_localized_tightness} it is enough to show tightness of $(\mathbbm{1}_{A_j}u_\epsilon)_{\epsilon\in (0,1)}$ on $L^{p_l}([0,T]\times \TT)$ for each $j,l\in \NN$. We let again $\gamma\in(0,\frac{1}{2})$, $\mu\in (1,\frac{7}{n+2})$, $\nu\in [1,\frac{7}{n+4})$ with $\gamma-\frac{2-\nu}{2\nu}>0$ and define the set
			\[
		K_\delta \,=\, \Bigl\{ u\,
		\Big|\,\max\Bigl\{\|u\|_{L^{p_{l+1}}([0,T]\times \TT)} , \|\partial_x u\|_{L^{1}([0,T]\times \TT)} , 
		\|u\|_{W^{\gamma, \frac{2\nu}{2-\nu}}(0,T;W^{-3,\mu}(\TT))}\Bigr\}\, \le \tfrac{1}{\delta} 
		\Bigr\}.
		\]
		The set $K_\delta$ is bounded in $L^1(0,T; W^{1,1}(\TT))$ and thus compact in $L^1([0,T]\times \TT)$ by compactness of the embedding 
		\[
		L^1(0,T;W^{1,1}(\TT))\cap W^{\gamma, \frac{2\nu}{2-\nu}}(0,T;W^{-3,\mu}(\TT)) \,\hookrightarrow\, L^1(0,T;L^1(\TT)),
		\]
		see again \cite[Corollary 5]{Simon1986}. Since $K_\delta$ is moreover bounded in $L^{p_{l+1}}([0,T]\times \TT)$, it is compact in $L^{p_l}([0,T]\times \TT)$ by interpolation. Lastly, with the help of \eqref{Eq240}, \eqref{Eq251} and \eqref{Eq250}, we can conclude that $\PP(\{ 
		\mathbbm{1}_{A_j}u_\epsilon\notin K_\delta  \}) \to 0$ uniformly in $\epsilon$ as $\delta\searrow 0$,  analogously to \eqref{Eq78}.
		
		\textit{Tightness on $\XS$.} By Lemma \ref{Tightness_Proj_Limit} and Lemma \ref{Lemma_localized_tightness} it is again sufficient to verify that $(\mathbbm{1}_{A_j}u_\epsilon)_{\epsilon\in (0,1)}$ lies tight on $L^{r_l}(0,T;W^{1,r_l}(\TT))$, equipped with its weak topology for each $j,l\in \NN$. To this end, we define the set
		\[
		K_\delta\,=\, \bigl\{
		u\,\big|\,
		\|u\|_{L^{r_l}([0,T]\times \TT)} \le \tfrac{1}{\delta},
		\, \|\partial_x u\|_{L^{r_l}([0,T]\times \TT)} \le \tfrac{1}{\delta}
		\bigr\},
		\]
		which is bounded in $L^{r_l}(0,T;W^{1,r_l}(\TT))$ and consequently compact with respect to the weak topology by the Banach-Alaoglu theorem. Following the lines of \eqref{Eq78}, we conclude that  $\PP(\{ 
		\mathbbm{1}_{A_j}u_\epsilon\notin K_\delta  \}) \to 0$ uniformly in $\epsilon$ as $\delta\searrow 0$ by  \eqref{Eq240} and \eqref{Eq251}, which implies tightness.
		
		\textit{Tightness of the powers.} A last application of Lemma  \ref{Lemma_prod_tightness} and Lemma \ref{Lemma_localized_tightness} yields that it suffices to show tightness of $\bigl(\mathbbm{1}_{A_j}u_\epsilon^\frac{\alpha_l+n+1}{2}\bigr)_{\epsilon\in (0,1)}$ on $\Xp$ for all $j,l\in \NN$. To this end, we define $v_\epsilon= u_\epsilon^\frac{\alpha_l+n+1}{2}$ and arguing as for $w_\epsilon$ in the beginning of the proof of Lemma \ref{lemma_int_p}, we conclude that $v_\epsilon$ admits 
		\[
		\tfrac{(\alpha_l+n+1)(\alpha_l+n-1)}{4}u_\epsilon^{\frac{\alpha_l+n-3}{2}} (\partial_x u_\epsilon)^2\,+\, \tfrac{\alpha_l+n+1}{2}u_\epsilon^\frac{\alpha_l+n-1}{2}\partial_x^2u_\epsilon
		\]
		as a second weak derivative. In particular, using Parseval's relation we can estimate
		\begin{align}
			\|v_\epsilon\|_{H^2(\TT)}^2 \,&=\, \sum_{k\in \ZZ}  (1+(2\pi k)^2 +(2\pi k)^4) |\hat{v_\epsilon}(k)|^2\\&\lesssim\, |\hat{v_\epsilon}(0)|^2\,+\, \sum_{k\in \ZZ}  (2\pi k)^4|\hat{v_\epsilon}(k)|^2\,\le\, \|v_\epsilon\|_{L^2(\TT)}^2\,+\, \|\partial_x^2 v_\epsilon\|_{L^2(\TT)}^2, 	
		\end{align}
		such that
		\begin{align}\begin{split}
				\label{Eq215}
			\|v_\epsilon\|_{L^2(0,T; H^2(\TT))}^2&\lesssim\, \int_0^T
			\|v_\epsilon\|_{L^2(\TT)}^2\,+\, \|\partial_x^2 v_\epsilon\|_{L^2(\TT)}^2
			\, dt  \\ &\lesssim_{\alpha_l, n}\,
			\int_0^T \bigl\|u_\epsilon^\frac{\alpha_l +n+1}{2}\bigr\|_{L^2(\TT)}^2\,dt\,+\, 
			\int_0^T \int_{\TT}  u_\epsilon^{{\alpha_l+n-1}} (\partial_x^2 u_\epsilon)^2 \,+\,u_\epsilon^{{\alpha_l+n-3}} (\partial_x u_\epsilon)^4\,dx\,dt\\&=\,
			\|u_\epsilon\|_{L^{\alpha_l+n+1}([0,T]\times\TT)}^{\alpha_l+n+1}\,+\, 
			\int_0^T \int_{\TT} u_\epsilon^{{\alpha_l+n-1}} (\partial_x^2 u_\epsilon)^2 \,+\,u_\epsilon^{{\alpha_l+n-3}} (\partial_x u_\epsilon)^4  \,dx\,dt.
			\end{split}
		\end{align}
		Hence, by invoking \eqref{Eq253} and \eqref{Eq240} we obtain
		\begin{align}&
			\EE\bigl[
			\mathbbm{1}_{A_j} \|v_\epsilon\|_{L^2(0,T; H^2(\TT))}^2
			\bigr] \\&\quad
			\lesssim_{\alpha_l, n} \,\EE\bigl[
			\mathbbm{1}_{A_j} \|u_\epsilon\|_{L^{\alpha_l+n+1}([0,T]\times\TT)}^{\alpha_l+n+1}
			\bigr]\,+\, \EE\biggl[
			\mathbbm{1}_{A_j} \int_0^T \int_{\TT} u_\epsilon^{{\alpha_l+n-1}} (\partial_x^2 u_\epsilon)^2 \,+\,u_\epsilon^{{\alpha_l+n-3}} (\partial_x u_\epsilon)^4  \,dx\,dt
			\biggr]\\&\quad\lesssim_{\alpha_l, n,\Lambda,T} \,(j+1)^{\alpha_l+n+1}.
		\end{align}
	Tightness of $(\mathbbm{1}_{A_j}v_\epsilon)_{\epsilon\in (0,1)}$ in $\Xp$ follows by Chebychev's inequality and the Banach-Alaoglu theorem.
	\end{proof}
	Since the sequence \eqref{Eq77} lies tight on the space \eqref{Eq79}, we can extract an equidistributed convergent subsequence.  
	\begin{cor}\label{Cor_SS}
		There exists a complete probability space $(\tilde{\Omega}, \tilde{\mathfrak{A}}, \tilde{\PP})$ and an equidistributed subsequence
		\begin{equation}\label{Eq81}
			\Bigl(\Bigl( (\tilde{\chi}_\epsilon^{(j)})_{j\in \NN}, (\tilde{\beta}_\epsilon^{(k)})_{k\in \mathbb{Z}},\tilde{ u}_\epsilon,\tilde{ u}_\epsilon, \tilde{u}_\epsilon, \bigl(\tilde{u}_\epsilon^{\frac{\alpha_l+n+1}{2}}\bigr)_{l\in \NN}\Bigr)\Bigr)_{\epsilon}
		\end{equation}
		of \eqref{Eq77}, consisting of $\mathfrak{B}(\X)$-measurable, $\X$-valued random variables
		and a $\mathfrak{B}(\X)$-measurable, $\X$-valued random variable
		\begin{equation}\label{Eq82}
			\Bigl(
			(\tilde{\chi}^{(j)})_{j\in \NN},
			(\tilde{\beta}^{(k)})_{k\in \mathbb{Z}}, \tilde{u}, \tilde{u}, \tilde{u}, \bigl(\tilde{u}^{\frac{\alpha_l+n+1}{2}}\bigr)_{l\in \NN}\Bigr),
		\end{equation}
		defined 
		on $(\tilde{\Omega}, \tilde{\mathfrak{A}}, \tilde{\PP})$  such that
		\begin{equation}\label{Eq83}\Bigl(
			(\tilde{\chi}_\epsilon^{(j)})_{j\in \NN}, (\tilde{\beta}_\epsilon^{(k)})_{k\in \ZZ}, \tilde{u}_\epsilon, \tilde{u}_\epsilon,\tilde{u}_\epsilon, \bigl(\tilde{u}_\epsilon^{\frac{\alpha_l+n+1}{2}}\bigr)_{l\in \NN}\Bigr)\,\to \,\Bigl( (\tilde{\chi}^{(j)})_{j\in \NN},
			(\tilde{\beta}^{(k)})_{k\in \ZZ}, \tilde{u}, \tilde{u}, \tilde{u}, \bigl(\tilde{u}^{\frac{\alpha_l+n+1}{2}}\bigr)_{l\in \NN}\Bigr)
		\end{equation}
		$\tilde{\PP}$-almost surely in $\X$,
		as $\epsilon\searrow 0$.
	\end{cor}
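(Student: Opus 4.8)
The plan is to obtain Corollary~\ref{Cor_SS} as a direct consequence of the Skorokhod representation theorem --- in the form due to Jakubowski that covers non-metrizable spaces --- applied to the tight family supplied by Lemma~\ref{Lemma_tightness}. The first step is to record that the space $\X$ from \eqref{Eq79} is a countable product whose factors are of two kinds: Polish spaces --- the factors $\Xind=\RR$, $\XBM=C([0,T])$, and, by the construction recalled in Appendix~\ref{Appendix_lcs}, the metrizable projective limits $\Xc$ and $\XL$ (each a closed subspace of a countable product of Polish spaces) --- and separable Banach spaces equipped with their weak topology, namely the factors $L^{r_l}(0,T;W^{1,r_l}(\TT))$ of $\XS$ and the factors $\Xp=L^2(0,T;H^2(\TT))$ of $\Xpi$. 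A separable Banach space with its weak topology carries a countable family of continuous functionals separating points, namely the dual pairings with a countable dense subset of its (separable) dual, and there its weak Borel $\sigma$-algebra agrees with the norm Borel $\sigma$-algebra; hence $\X$ carries a countable point-separating family of continuous functions generating $\mathfrak{B}(\X)$, which is exactly the structure required for the Jakubowski version of Skorokhod's theorem.

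Next I would fix an arbitrary sequence $\epsilon_m\searrow 0$ to pass from the uncountable family to a countable one; by Lemma~\ref{Lemma_tightness} the laws of the random variables \eqref{Eq77} along this sequence form a tight family of Borel probability measures on $\X$, and each such random variable is $\mathfrak{B}(\X)$-measurable thanks to the regularity of $u_\epsilon$ collected in Consequence~\ref{Consequences_Ass_1}. The Jakubowski--Skorokhod theorem then produces, after passing to a further subsequence, a probability space $(\tilde\Omega,\tilde{\mathfrak A},\tilde\PP)$ carrying $\mathfrak{B}(\X)$-measurable random variables \eqref{Eq81} with the same laws as the corresponding components of \eqref{Eq77}, together with a limit random variable, such that the convergence \eqref{Eq83} holds $\tilde\PP$-almost surely in the topology of $\X$. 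Completeness of $(\tilde\Omega,\tilde{\mathfrak A},\tilde\PP)$ is arranged by passing to the completion, which affects neither the laws nor the almost sure convergence. Finally, the limit is written in the form \eqref{Eq82}: the structural identities encoded there --- that the three $u$-components coincide and that the last components are the indicated powers of $\tilde u$ --- hold identically for every member of the sequence \eqref{Eq77} and cut out closed subsets of $\X$, using the continuity and injectivity of the embeddings of $\Xc$, $\XL$, $\XS$ into the space of distributions on $(0,T)\times\TT$ and the continuity of the Nemytskii map $v\mapsto v^{\frac{\alpha_l+n+1}{2}}$ into $L^2([0,T]\times\TT)$; hence the joint law is supported on these sets and the limit lies in them $\tilde\PP$-almost surely.

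The invocation of the representation theorem is routine, so no single step is a serious obstacle; the only point demanding care --- where a naive appeal to the classical metric-space Skorokhod theorem would fail --- is that the weak-topology factors $\XS$ and $\Xpi$ make $\X$ non-metrizable, forcing the use of the Jakubowski generalisation, and one has to confirm that ``equidistributed'' is meaningful on $\X$, i.e.\ that the laws of the approximating variables are pinned down by the countable separating family. Both are settled by the coincidence of the weak and norm Borel structures on separable Banach spaces.
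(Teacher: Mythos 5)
Your proposal is correct and follows essentially the same route as the paper: tightness from Lemma \ref{Lemma_tightness}, Jakubowski's Skorokhod representation theorem for non-metric spaces, and identification of the repeated components and of the powers of $\tilde u$ in the limit (your closed-graph/closed-diagonal argument is equivalent in substance to the paper's direct use of a.s. convergence in distributions plus Vitali's theorem). The only place the paper is more explicit is in verifying the hypotheses of \cite[Theorem 2]{jakub98}: it constructs the countable point-separating family concretely (via a bounded injection $\rho$ and the trigonometric basis) and checks $\mathfrak{A}$-measurability of these functions composed with \eqref{Eq77}, which is the actual measurability requirement of the theorem, rather than full $\mathfrak{B}(\X)$-measurability of the original random variables as you assert.
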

	\begin{proof}
		The existence of an equidistributed subsequence follows from Lemma \ref{Lemma_tightness}, when we can verify the technical assumption of \cite[Theorem 2]{jakub98}, i.e. that $\X$ admits a countable family of continuous functions separating its points and $\mathfrak{A}$-measurability of these functions, when composed with \eqref{Eq77}. To construct such a family of functions, separating the points in $\X$, we can project on a component of \eqref{Eq79} and then apply functions separating the points in this component, such that we can consider the spaces $\Xind$, $\XBM$,  $\Xc$, $\XL$, $\XS$ and $\Xp$  individually. For $\Xc$, we let $\rho\colon \RR\to [-1,1]$ be a continuous injection and take  the functions $
		\Xc \to [-1,1], u\mapsto \rho (\langle u(t),f_k\rangle )$ for $t\in [0,T]\cap \mathbbm{Q}$ and $k\in \ZZ$, where we recall that $f_k$ was defined by \eqref{Eq80}. For $\XBM$ the same construction applies and for $\Xind$ the function $\rho$ itself separates the points. Since  the spaces $\XL$, $\XS$ and $\Xp$ embed into $L^2([0,T]\times \TT)$ with its weak topology, the family $u\mapsto \rho( \langle u, f_j(T^{-1}\cdot ) \otimes f_k \rangle)$, $j,k\in \ZZ$ separates the points in them. 
		
		To also check $\mathfrak{A}$-measurability of these functions composed with \eqref{Eq77}, we first observe that  
		$\rho(\mathbbm{1}_{A_j})$ and
		$\rho(\beta^{(k)}(t))$ are random variables in $\RR$ for every $j\in \NN$ and $k\in \ZZ$. For the other cases, we note that $u_\epsilon$ is adapted in $H^1(\TT)$ such that $u_\epsilon(t,x)$ is $\mathfrak{A}$-measurable for each $(t,x)\in [0,T]\times \TT$ and thus also $u_\epsilon$ as a random variable in $C([0,T]\times \TT)$. In particular, the compositions
		\[
		\rho (\langle u_\epsilon(t),f_k\rangle ),\quad \rho( \langle u_\epsilon, f_j(T^{-1}\cdot ) \otimes f_k \rangle)
		\]
		are $\mathfrak{A}$-measurable, too. Lastly, we use that 
		\[
		C([0,T]\times \TT)\to 
		C([0,T]\times \TT),\,u\mapsto u^\frac{\alpha_l+n+1}{2}
		\]
		is continuous, to conclude  that \[
		\rho\bigl(\bigl \langle u_\epsilon^\frac{\alpha_l+n+1}{2}, f_j(T^{-1}\cdot ) \otimes f_k \bigr\rangle\bigr)
		\] is $\mathfrak{A}$-measurable. 
		
		Hence, \cite[Theorem 2]{jakub98} is indeed applicable and there exists an equidistributed, convergent subsequence \eqref{Eq81} of \eqref{Eq77}, which converges almost surely to a random variable
		\[
		\bigl( (\tilde{\chi}^{(j)})_{j\in \NN}, (\tilde{\beta}^{(k)})_{k\in \mathbb{Z}}, \tilde{u}, \tilde{f}, \tilde{g}, (\tilde{v}_l)_{l\in \NN}\bigr)
		\]
		in $\X$. Since $\tilde{u}_\epsilon$ converges almost surely to $\tilde{u}$, $\tilde{f}$ and $\tilde{g}$ in the space of distributions on $(0,T)\times \TT$, it holds $\tilde{u}=\tilde{f}=\tilde{g}$. Moreover, since $\tilde{u}_\epsilon\to \tilde{u}$ in $\XL$ we have that 
		$\tilde{u}_\epsilon^\frac{\alpha_l+n+1}{2}\to \tilde{u}^\frac{\alpha_l+n+1}{2}$ in $L^2([0,T]\times\TT)$ by Vitali's convergence theorem. With the help of  $\tilde{u}_\epsilon^{\frac{\alpha_l+n+1}{2}}\to\tilde{v}_l$ in $\Xp$, we conclude  $\tilde{v}_l=\tilde{u}^\frac{\alpha_l+n+1}{2}$, which finishes the proof.
	\end{proof}
	\subsection{Convergence to a solution and a-priori estimates}\label{Sec_convergence} 
	For the rest of this article, we consider the complete probability space $(\tilde{\Omega}, \tilde{\mathfrak{A}}, \tilde{\PP})$ with the random variables \eqref{Eq81} and \eqref{Eq82} obtained in Corollary \ref{Cor_SS}. We introduce the filtration $\tilde{\mathfrak{F}}$ on $(\tilde{\Omega}, \tilde{\mathfrak{A}}, \tilde{\PP})$ as the augmentation of $\tilde{\mathfrak{G}}$, defined by
	\[
	\tilde{\mathfrak{G}}_t\,=\, \sigma \bigl\{ \tilde{\chi}^{(j)}, 
	\tilde{u}(s), \tilde{\beta}^{(k)}(s)\,|\, 
	j\in \NN,\,0\le s\le t,\, k\in \mathbb{Z}
	\bigr\}.
	\]
	Here, we consider $\tilde{u}(s)$ as a random element in $H^{-1}(\TT)$. 
	\begin{lemma}\label{Lemma_BM}
		The family $(\tilde{\beta}^{(k)})_{k\in \ZZ}$ is a family of independent $\tilde{\mathfrak{F}}$-Brownian motions.	
	\end{lemma}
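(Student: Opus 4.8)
\emph{Overview.} The plan is to combine the equidistribution of Corollary~\ref{Cor_SS} with the classical martingale characterisation of Brownian motion. The distributional statement is immediate; the work lies in upgrading it to a statement about the filtration $\tilde{\mathfrak{F}}$.

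\emph{Step 1: $(\tilde\beta^{(k)})_{k\in\ZZ}$ is a family of independent real Brownian motions.} By Corollary~\ref{Cor_SS} the random variable $\bigl((\tilde\chi^{(j)}_\epsilon)_j,(\tilde\beta^{(k)}_\epsilon)_k,\tilde u_\epsilon,\dots\bigr)$ has the same law on $\X$ as $\bigl((\mathbbm{1}_{A_j})_j,(\beta^{(k)})_k,u_\epsilon,\dots\bigr)$; since $(\beta^{(k)})_{k\in\ZZ}$ is a family of independent Brownian motions, so is $(\tilde\beta^{(k)}_\epsilon)_{k\in\ZZ}$ for each $\epsilon$. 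The almost sure convergence $\tilde\beta^{(k)}_\epsilon\to\tilde\beta^{(k)}$ in $C([0,T])$ from \eqref{Eq83} preserves the joint law, so $(\tilde\beta^{(k)})_{k\in\ZZ}$ is again a family of independent real Brownian motions; in particular each $\tilde\beta^{(k)}$ is continuous and $\tilde{\mathfrak{G}}$-adapted, hence $\tilde{\mathfrak{F}}$-adapted.

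\emph{Step 2: the martingale identities.} Fix $0\le s\le t\le T$, finite index sets $J\subset\NN$, $K\subset\ZZ$, times $0\le s_1\le\dots\le s_m\le s$, and a bounded continuous functional $\Phi$ on $\RR^{|J|}\times(H^{-1}(\TT))^{m}\times\RR^{|K|m}$. On the original probability space $u_\epsilon$ is $\mathfrak{F}$-adapted (Consequence~\ref{Consequences_Ass_1}), the $\mathbbm{1}_{A_j}$ are $\mathfrak{F}_0$-measurable and $(\beta^{(k)})_{k\in\ZZ}$ is a family of independent $\mathfrak{F}$-Brownian motions, so
\[
\Phi_\epsilon\,:=\,\Phi\bigl((\mathbbm{1}_{A_j})_{j\in J},(u_\epsilon(s_i))_{i},(\beta^{(k)}(s_i))_{k\in K,\,i}\bigr)
\]
is $\mathfrak{F}_s$-measurable, and therefore
\[
\EE\bigl[(\beta^{(k)}(t)-\beta^{(k)}(s))\,\Phi_\epsilon\bigr]=0,\qquad
\EE\bigl[\bigl(\beta^{(k)}(t)\beta^{(j)}(t)-\beta^{(k)}(s)\beta^{(j)}(s)-\delta_{kj}(t-s)\bigr)\,\Phi_\epsilon\bigr]=0
\]
for all $k,j\in K$. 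By equidistribution these two identities hold verbatim with $(\tilde\chi^{(j)}_\epsilon,\tilde\beta^{(k)}_\epsilon,\tilde u_\epsilon)$ in place of $((\mathbbm{1}_{A_j}),(\beta^{(k)}),u_\epsilon)$. Now let $\epsilon\searrow0$: the evaluation map $u\mapsto u(s_i)$ is continuous from $\Xc$ into $H^{-1}(\TT)$ (recall $\kappa_l\ge-1$) and from $\XBM$ into $\RR$, so \eqref{Eq83} and continuity of $\Phi$ give almost sure convergence of the functionals, while the increments $\tilde\beta^{(k)}_\epsilon(t)-\tilde\beta^{(k)}_\epsilon(s)$ and the products $\tilde\beta^{(k)}_\epsilon(t)\tilde\beta^{(j)}_\epsilon(t)$ have moments bounded uniformly in $\epsilon$ (being Gaussian, respectively products of Gaussians), which furnishes the uniform integrability needed to interchange limit and expectation. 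We obtain, writing $\tilde\Phi=\Phi\bigl((\tilde\chi^{(j)})_{j\in J},(\tilde u(s_i))_{i},(\tilde\beta^{(k)}(s_i))_{k\in K,\,i}\bigr)$,
\[
\tilde{\EE}\bigl[(\tilde\beta^{(k)}(t)-\tilde\beta^{(k)}(s))\,\tilde\Phi\bigr]=0,\qquad
\tilde{\EE}\bigl[\bigl(\tilde\beta^{(k)}(t)\tilde\beta^{(j)}(t)-\tilde\beta^{(k)}(s)\tilde\beta^{(j)}(s)-\delta_{kj}(t-s)\bigr)\,\tilde\Phi\bigr]=0 .
\]

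\emph{Step 3: conclusion.} A monotone class argument shows that functionals of the above form generate $\tilde{\mathfrak{G}}_s$, so the two identities say precisely that $\tilde\beta^{(k)}$ and $t\mapsto\tilde\beta^{(k)}(t)\tilde\beta^{(j)}(t)-\delta_{kj}t$ are $\tilde{\mathfrak{G}}$-martingales; being continuous, they remain martingales for the right-continuous augmentation, and completing by the $\tilde\PP$-null sets is harmless, so they are $\tilde{\mathfrak{F}}$-martingales. Hence each $\tilde\beta^{(k)}$ is a continuous square-integrable $\tilde{\mathfrak{F}}$-martingale with quadratic covariation $\langle\tilde\beta^{(k)},\tilde\beta^{(j)}\rangle_t=\delta_{kj}t$, and Lévy's characterisation theorem (in its version for a countable family of martingales) yields that $(\tilde\beta^{(k)})_{k\in\ZZ}$ is a family of independent $\tilde{\mathfrak{F}}$-Brownian motions. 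The main obstacle is the limit passage in Step~2: one must check that the functionals of $\tilde u_\epsilon(s_i)$ really converge --- which is why $\tilde{\mathfrak{G}}$ was built from $H^{-1}(\TT)$-valued evaluations compatible with the topology of $\Xc$ --- and one must secure uniform integrability of the Gaussian and bilinear-in-Gaussian increments in order to exchange limit and expectation.
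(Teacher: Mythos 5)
Your argument is correct and is precisely the standard proof that the paper invokes here: the paper's own "proof" consists of the citation to \cite[Lemma 5.7]{fischer_gruen_2018}, which carries out exactly this scheme (transfer of the martingale identities for $\tilde\beta^{(k)}$ and $\tilde\beta^{(k)}\tilde\beta^{(j)}-\delta_{kj}t$ via equidistribution, passage to the limit using uniform integrability of the Gaussian increments, a monotone class argument to reach all of $\tilde{\mathfrak{G}}_s$, extension to the augmented filtration by continuity, and Lévy's characterisation). No gaps; your treatment of the evaluation maps into $H^{-1}(\TT)$ is consistent with how the paper handles the analogous limit passage in Lemma \ref{Lemma_SPDE}.
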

	\begin{proof}The proof is standard and we refer to \cite[Lemma 5.7]{fischer_gruen_2018}.
	\end{proof}
	\begin{lemma}\label{Lemma_new_partition}
		The family $(\tilde{\chi}^{(j)})_{j\in \NN}$ consists of indicator functions of a measurable partition $(\tilde{A}_j)_{j\in \NN}$ of $\tilde{\Omega}$.
	\end{lemma}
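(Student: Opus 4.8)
The plan is to transfer the partition structure of $(\mathbbm{1}_{A_j})_{j\in\NN}$ on $(\Omega,\mathfrak{A},\PP)$ to the limit $(\tilde{\chi}^{(j)})_{j\in\NN}$, exploiting that by Corollary \ref{Cor_SS} the families \eqref{Eq81} and \eqref{Eq77} are equidistributed and that \eqref{Eq83} yields $\tilde{\chi}^{(j)}_\epsilon\to\tilde{\chi}^{(j)}$ $\tilde{\PP}$-almost surely in $\Xind=\RR$ for every $j$, since convergence in $\Xii=\prod_{j\in\NN}\Xind$ is coordinatewise. First I would observe that $\tilde{\chi}^{(j)}_\epsilon$ has the same law as $\mathbbm{1}_{A_j}$ and is therefore $\{0,1\}$-valued, so its almost sure limit $\tilde{\chi}^{(j)}$ is $\{0,1\}$-valued as well; after setting $\tilde{\chi}^{(j)}$ equal to $0$ on the measurable $\tilde{\PP}$-null set where this fails, we may write $\tilde{\chi}^{(j)}=\mathbbm{1}_{\tilde{A}_j}$ with $\tilde{A}_j:=\{\tilde{\chi}^{(j)}=1\}\in\tilde{\mathfrak{A}}$. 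For pairwise disjointness, the map $\Xii\to\RR$, $(x_l)_l\mapsto x_jx_k$, is continuous, so $\tilde{\chi}^{(j)}_\epsilon\tilde{\chi}^{(k)}_\epsilon\to\tilde{\chi}^{(j)}\tilde{\chi}^{(k)}$ almost surely; since $\mathbbm{1}_{A_j}\mathbbm{1}_{A_k}\equiv0$ for $j\neq k$, the law of $\tilde{\chi}^{(j)}_\epsilon\tilde{\chi}^{(k)}_\epsilon$ is the Dirac mass at $0$, hence $\tilde{\chi}^{(j)}\tilde{\chi}^{(k)}=0$, i.e.\ $\tilde{\PP}(\tilde{A}_j\cap\tilde{A}_k)=0$.

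The step that requires care is that $\bigcup_{j}\tilde{A}_j$ exhausts $\tilde{\Omega}$ up to a null set, the difficulty being that the set of $(x_l)_l\in\Xii$ having all entries in $\{0,1\}$ and exactly one entry equal to $1$ is \emph{not} closed, so one cannot pass the limit through it directly. Instead I would argue via the finite partial sums: for each fixed $N$ the map $\Xii\to\RR$, $(x_l)_l\mapsto\sum_{l\le N}x_l$, is continuous, so $\sum_{l\le N}\tilde{\chi}^{(l)}_\epsilon\to\sum_{l\le N}\tilde{\chi}^{(l)}$ almost surely, whence $\sum_{l\le N}\tilde{\chi}^{(l)}$ has the same law as $\sum_{l\le N}\mathbbm{1}_{A_l}=\mathbbm{1}_{\{\|u_0\|_{\MM(\TT)}<N\}}$. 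In particular $\tilde{\PP}\bigl(\sum_{l\le N}\tilde{\chi}^{(l)}=1\bigr)=\PP(\|u_0\|_{\MM(\TT)}<N)$, and the right-hand side tends to $1$ as $N\to\infty$ because $\|u_0\|_{\MM(\TT)}<\infty$ almost surely by Assumption \ref{Condition_IV}. Therefore $\tilde{\PP}\bigl(\bigcup_{l\in\NN}\tilde{A}_l\bigr)\ge\tilde{\PP}\bigl(\sum_{l\le N}\tilde{\chi}^{(l)}\ge1\bigr)\ge\PP(\|u_0\|_{\MM(\TT)}<N)$ for every $N$, which forces $\tilde{\PP}\bigl(\bigcup_{l\in\NN}\tilde{A}_l\bigr)=1$.

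To finish, the set $\tilde{N}:=\bigl(\bigcup_{j\neq k}(\tilde{A}_j\cap\tilde{A}_k)\bigr)\cup\bigl(\tilde{\Omega}\setminus\bigcup_{l}\tilde{A}_l\bigr)$ is a $\tilde{\PP}$-null element of $\tilde{\mathfrak{A}}$, and replacing $\tilde{A}_j$ by $\tilde{A}'_j:=\tilde{A}_j\setminus\bigl(\tilde{N}\cup\bigcup_{k<j}\tilde{A}_k\bigr)$ for $j\ge2$ and $\tilde{A}'_1:=\tilde{\Omega}\setminus\bigcup_{j\ge2}\tilde{A}'_j$ produces a genuine measurable partition $(\tilde{A}'_j)_{j\in\NN}$ of $\tilde{\Omega}$ satisfying $\tilde{\chi}^{(j)}=\mathbbm{1}_{\tilde{A}'_j}$ $\tilde{\PP}$-almost surely, the modification happening only on $\tilde{N}$; it suffices to work with these versions in the sequel. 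I expect the covering step to be the only genuinely nontrivial point, resolved as above by combining the almost sure convergence of the continuous finite partial sums with the finiteness of $\|u_0\|_{\MM(\TT)}$; all remaining assertions are a routine transfer of almost sure identities through equidistribution.
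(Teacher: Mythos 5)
Your proof is correct and takes essentially the same route as the paper's, which simply observes that the almost sure convergence in $\Xii$ implies convergence in law, so that the whole sequence $(\tilde{\chi}^{(j)})_{j\in\NN}$ inherits the law of $(\mathbbm{1}_{A_j})_{j\in\NN}$ and the partition properties follow at once. Your coordinatewise, product and partial-sum verifications are just a more explicit unpacking of this transfer-of-law argument (and your use of $\PP(\|u_0\|_{\MM(\TT)}<N)\to 1$ for the covering step is the right way to make the exhaustion precise).
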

	\begin{proof}
		The family $(\tilde{\chi}_{\epsilon}^{(j)})_{j\in \NN}\sim (\mathbbm{1}_{A_j})_{j\in \NN}$ converges to $(\tilde{\chi}_j)_{j\in \NN}$ almost surely and thus in law by \cite[Proposition 9.3.5]{Dudley_2002}. Hence, the law of $(\tilde{\chi}_j)_{j\in \NN}$ coincides with the law of $(\mathbbm{1}_{A_j})_{j\in \NN}$ and the claim follows.
	\end{proof}
	Additionally to the convergences \eqref{Eq83}, we obtain also strong convergence for the powers of $\tilde{u}_\epsilon$.
	\begin{lemma}\label{Lemma_additional_convergence}
		For each $l\in \NN$ one has 
		\begin{equation}\label{Eq84}
			\tilde{u}_\epsilon^\frac{\alpha_l+n+1}{2} \,\to \, 
			\tilde{u}^\frac{\alpha_l+n+1}{2} 
		\end{equation}
		almost surely in $L^2(0,T; H^1(\TT))$, as $\epsilon\searrow 0$.
	\end{lemma}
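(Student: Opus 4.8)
Fix $l\in\NN$ and abbreviate $v_\epsilon=\tilde u_\epsilon^{\frac{\alpha_l+n+1}{2}}$ and $v=\tilde u^{\frac{\alpha_l+n+1}{2}}$. The plan is to interpolate the strong $L^2([0,T]\times\TT)$-convergence of $v_\epsilon$, already recorded in the proof of Corollary~\ref{Cor_SS}, against the weak convergence $v_\epsilon\rightharpoonup v$ in $L^2(0,T;H^2(\TT))$ furnished by the convergence in $\Xp$ from Corollary~\ref{Cor_SS}. The elementary ingredient is the interpolation inequality on the torus: for $w\in H^2(\TT)$, Parseval's identity together with the Cauchy--Schwarz inequality applied to the sum $\sum_{k\in\ZZ}(2\pi k)^2|\hat w(k)|^2=\sum_{k\in\ZZ}|\hat w(k)|\,(2\pi k)^2|\hat w(k)|$ gives $\|\partial_x w\|_{L^2(\TT)}^2\le\|w\|_{L^2(\TT)}\|\partial_x^2 w\|_{L^2(\TT)}$, hence $\|w\|_{H^1(\TT)}^2\lesssim\|w\|_{L^2(\TT)}\|w\|_{H^2(\TT)}$ (the full $H^2$-norm, not the homogeneous one, appears because of the zero Fourier mode).

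Next I would apply this inequality with $w=v_\epsilon(t)-v(t)$ for a.e.\ $t$ --- which is legitimate since $v_\epsilon,v\in L^2(0,T;H^2(\TT))$ almost surely by \eqref{Eq215} in the proof of Lemma~\ref{Lemma_tightness} and by the identification in Corollary~\ref{Cor_SS} --- and then use the Cauchy--Schwarz inequality in time to obtain
\[
\|v_\epsilon-v\|_{L^2(0,T;H^1(\TT))}^2\,\lesssim\,\int_0^T\|v_\epsilon-v\|_{L^2(\TT)}\|v_\epsilon-v\|_{H^2(\TT)}\,dt\,\le\,\|v_\epsilon-v\|_{L^2([0,T]\times\TT)}\,\|v_\epsilon-v\|_{L^2(0,T;H^2(\TT))}.
\]
Finally I would argue that the right-hand side tends to $0$ almost surely: the first factor does so by what was shown in the proof of Corollary~\ref{Cor_SS} (Vitali's convergence theorem applied using $\tilde u_\epsilon\to\tilde u$ in $\XL$), while for the second factor the almost sure convergence $v_\epsilon\to v$ in the weak topology of the Hilbert space $L^2(0,T;H^2(\TT))$ means that $(v_\epsilon)_\epsilon$ converges weakly along the chosen subsequence, so by the uniform boundedness principle it is norm-bounded; hence $\sup_\epsilon\|v_\epsilon-v\|_{L^2(0,T;H^2(\TT))}<\infty$ $\tilde\PP$-almost surely. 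The product of a sequence tending to $0$ a.s.\ with an a.s.\ bounded one tends to $0$ a.s., which is exactly \eqref{Eq84}.

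I do not expect a real obstacle here; the only point needing a little care is that we only have the \emph{qualitative} a.s.\ bound $\sup_\epsilon\|v_\epsilon\|_{L^2(0,T;H^2(\TT))}<\infty$ rather than a deterministic one, which nevertheless suffices because $\epsilon$ runs through a fixed (sub)sequence, and that one must use the inhomogeneous torus interpolation inequality above so that the constant mode of $v_\epsilon-v$ is controlled by its $L^2$-norm.
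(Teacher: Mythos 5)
Your proposal is correct and follows essentially the same route as the paper, which simply states that the claim "follows by the weak convergence $\tilde{u}_\epsilon^{\frac{\alpha_l+n+1}{2}}\to\tilde{u}^{\frac{\alpha_l+n+1}{2}}$ in $\Xp$ and interpolation"; you have merely spelled out the interpolation inequality $\|w\|_{H^1(\TT)}^2\lesssim\|w\|_{L^2(\TT)}\|w\|_{H^2(\TT)}$ and the uniform-boundedness argument that the paper leaves implicit. The details (Cauchy--Schwarz in time, a.s.\ norm-boundedness of the weakly convergent subsequence) are all sound.
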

	\begin{proof}At the end of the proof of Corollary \ref{Cor_SS}, we showed that $\tilde{u}_\epsilon^\frac{\alpha_l+n+1}{2} \to \tilde{u}^\frac{\alpha_l+n+1}{2}$ almost surely in $L^2([0,T]\times \TT)$.
		Hence, the claim follows by the weak convergence  $\tilde{u}_\epsilon^\frac{\alpha_l+n+1}{2} \to \tilde{u}^\frac{\alpha_l+n+1}{2}$ in $\Xp$ and interpolation.
	\end{proof}
	\begin{lemma}\label{Lemma_Limits}
		For each $\vp\in C^\infty(\TT)$ and $t\in [0,T]$, the following holds almost surely  as $\epsilon\searrow 0$.\begin{enumerate}[label=(\roman*)]
			\item 
			\label{Item1}	\[\langle \tilde{u}_{\epsilon}(t), \vp\rangle\,\to\, \langle \tilde{u}(t), \vp\rangle,\]
			\item
			\label{Item2}	\[
			\int_0^t\langle
			\tilde{u}_\epsilon^{n-2} (\partial_x \tilde{u}_{\epsilon})^3, \partial_x\vp\rangle\, ds \,\to \,
			\int_0^t \langle
			\tilde{u}^{n-2} (\partial_x \tilde{u})^3, \partial_x\vp\rangle\, ds,
			\]
			\item \label{Item3}
			\[\int_0^t\langle \tilde{u}_\epsilon^{n-1}(\partial_x\tilde{u}_{\epsilon})^2 ,\partial_x^2\vp\rangle\, ds\,\to\,\int_0^t\langle
			\tilde{u}^{n-1} (\partial_x \tilde{u})^2, \partial_x\vp\rangle\, ds,
			\]
			\item \label{Item4}
			\[
			\int_0^t\langle \tilde{u}_\epsilon^{n}\partial_x \tilde{u}_{\epsilon }, \partial_x^3\vp \rangle\,ds	\,\to \, 
			\int_0^t\langle \tilde{u}^{n}\partial_x \tilde{u} , \partial_x^3\vp \rangle\,ds,
			\]
			\item \label{Item5}
			\[
			\sum_{k\in \ZZ}\int_0^t \langle
			\sigma_{k,\epsilon} q'(\tilde{u}_\epsilon) \partial_x (\sigma_{k,\epsilon} q(\tilde{u}_\epsilon)) ,\partial_x \vp\rangle\, ds\,\to \, 
			\sum_{k\in \ZZ}\int_0^t \langle
			\sigma_k q'(\tilde{u}) \partial_x (\sigma_k q(\tilde{u})) ,\partial_x \vp\rangle\, ds,
			\]
			\item  \label{Item6}
			\[
			\sum_{k\in \ZZ}  \int_0^t 
			\langle
			\sigma_{k,\epsilon} q(\tilde{u}_\epsilon), \partial_x \vp
			\rangle^2
			\,ds \,\to \, 
			\sum_{k\in \ZZ}  \int_0^t 
			\langle
			\sigma_{k} q(\tilde{u}), \partial_x \vp
			\rangle^2
			\,ds,
			\]
			\item \label{Item7}
			\[\int_0^t 
			\langle
			\sigma_{k,\epsilon} q(\tilde{u}_\epsilon), \partial_x \vp
			\rangle
			\,ds
			\,\to \,\int_0^t 
			\langle
			\sigma_{k} q(\tilde{u}), \partial_x \vp
			\rangle
			\,ds.
			\]
		\end{enumerate}
	\end{lemma}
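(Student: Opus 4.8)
The plan is to pass to the limit $\epsilon\searrow0$ in the seven expressions using the almost sure convergences from Corollary~\ref{Cor_SS} and Lemma~\ref{Lemma_additional_convergence}, together with the uniform bounds \eqref{Eq240}, \eqref{Eq251}, \eqref{Eq253} and mass conservation \eqref{Eq25_New}. Fix $\tilde\omega$ outside the relevant null set, so that $\tilde u_\epsilon\to\tilde u$ strongly in $L^{p_l}([0,T]\times\TT)$ and weakly in $L^{r_l}(0,T;W^{1,r_l}(\TT))$ for all $l$, and $\tilde u_\epsilon^{(\alpha_l+n+1)/2}\to\tilde u^{(\alpha_l+n+1)/2}$ strongly in $L^2(0,T;H^1(\TT))$ and weakly in $L^2(0,T;H^2(\TT))$ for all $l$; passing to a further subsequence where convenient (the limit being uniquely determined, this is harmless), one may also assume $\tilde u_\epsilon\to\tilde u$ and $\tilde u_\epsilon^{(\alpha_l+n-1)/2}\partial_x\tilde u_\epsilon\to\tilde u^{(\alpha_l+n-1)/2}\partial_x\tilde u$ pointwise a.e.\ on $[0,T]\times\TT$, hence $\partial_x\tilde u_\epsilon\to\partial_x\tilde u$ a.e.\ on $\{\tilde u>0\}$. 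Item~\ref{Item1} is then immediate since $\vp\in C^\infty(\TT)\subset H^{-\kappa_l}(\TT)$ and $\tilde u_\epsilon\to\tilde u$ in $C([0,T];H^{\kappa_l}(\TT))$. Item~\ref{Item4} follows by weak--strong convergence: $\partial_x\tilde u_\epsilon\rightharpoonup\partial_x\tilde u$ weakly in $L^2([0,T]\times\TT)$ (from the convergence in $\XS$) and $\tilde u_\epsilon^n\to\tilde u^n$ strongly in $L^2([0,T]\times\TT)$ (as $2n<7$), so $\tilde u_\epsilon^n\partial_x\tilde u_\epsilon\rightharpoonup\tilde u^n\partial_x\tilde u$ weakly-$*$ in $L^1([0,T]\times\TT)$, and one tests against $\mathbbm{1}_{[0,t]}\otimes\partial_x^3\vp\in L^\infty([0,T]\times\TT)$. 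Items~\ref{Item5}, \ref{Item6}, \ref{Item7} reduce to $q(\tilde u_\epsilon)=\tilde u_\epsilon^{n/2}\to\tilde u^{n/2}$ and $\tilde u_\epsilon^{n-1}\to\tilde u^{n-1}$ strongly in $L^2([0,T]\times\TT)$ and to the weak--strong convergence $\tilde u_\epsilon^{n-2}\partial_x\tilde u_\epsilon\rightharpoonup\tilde u^{n-2}\partial_x\tilde u$, after expanding $\sigma_{k,\epsilon}q'(\tilde u_\epsilon)\partial_x(\sigma_{k,\epsilon}q(\tilde u_\epsilon))=\tfrac{n}{2}\sigma_{k,\epsilon}\partial_x\sigma_{k,\epsilon}\,\tilde u_\epsilon^{n-1}+\tfrac{n^2}{4}\sigma_{k,\epsilon}^2\,\tilde u_\epsilon^{n-2}\partial_x\tilde u_\epsilon$; the series over $k\in\ZZ$ is controlled by dominated convergence because $\sigma_{k,\epsilon}\to\sigma_k$ for each fixed $k$ while $\sum_{k\in\ZZ}\bigl(\|\sigma_k\|_{C^1(\TT)}^2+\|\sigma_k\partial_x\sigma_k\|_{C(\TT)}\bigr)<\infty$ by Assumption~\ref{Condition_sigma}.

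The core of the lemma is item~\ref{Item2} (item~\ref{Item3} being analogous), for which I would show $\tilde u_\epsilon^{n-2}(\partial_x\tilde u_\epsilon)^3\to\tilde u^{n-2}(\partial_x\tilde u)^3$ in $L^1([0,T]\times\TT)$ by Vitali's convergence theorem; since $|\partial_x\vp|\le\|\vp\|_{C^1(\TT)}$ this suffices. Uniform integrability follows from Hölder's inequality and \eqref{Eq240}, \eqref{Eq251}: for $p_l,r_l$ close enough to $7,\tfrac72$, the product $\tilde u_\epsilon^{n-2}(\partial_x\tilde u_\epsilon)^3$ is bounded in $L^{q}([0,T]\times\TT)$ for some $q>1$ (using Assumption~\ref{Condition_n} through $\tfrac{7}{n+4}>1$), uniformly in $\epsilon$. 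As for convergence in measure, a.e.\ convergence already holds on $\{\tilde u>0\}$ by the reduction above, and on $\{\tilde u=0\}$ the limit $\tilde u^{n-2}(\partial_x\tilde u)^3$ vanishes a.e.\ (as $n-2>0$ and $\partial_x\tilde u$ is finite a.e.), so it only remains to prove $\tilde u_\epsilon^{n-2}(\partial_x\tilde u_\epsilon)^3\to0$ in measure on $\{\tilde u=0\}$.

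To this end I split $\{\tilde u=0\}$, for $\eta\in(0,1)$, along $\{\tilde u_\epsilon>\eta\}$ and $\{\tilde u_\epsilon\le\eta\}$: the measure of $\{\tilde u=0\}\cap\{\tilde u_\epsilon>\eta\}$ tends to $0$ because $\tilde u_\epsilon\to\tilde u$ in measure, while on $\{\tilde u_\epsilon\le\eta\}$ I invoke the $\alpha_l$-entropy structure. Writing $w_\epsilon=\tilde u_\epsilon^{(\alpha_l+n+1)/4}$, which lies in $W^{1,4}(\TT)$ and obeys the chain rule $\tilde\PP\otimes dt$-a.e.\ by \eqref{Eq34}, one has $\tilde u_\epsilon^{n-2}(\partial_x\tilde u_\epsilon)^3=\bigl(\tfrac{4}{\alpha_l+n+1}\bigr)^3\tilde u_\epsilon^{(n+1-3\alpha_l)/4}(\partial_x w_\epsilon)^3$ with exponent $(n+1-3\alpha_l)/4>0$, whence, by Hölder,
\[
\int_{\{\tilde u_\epsilon\le\eta\}}\bigl|\tilde u_\epsilon^{n-2}(\partial_x\tilde u_\epsilon)^3\bigr|\,dx\,dt\,\lesssim_{\alpha_l,n,T}\,\eta^{(n+1-3\alpha_l)/4}\,\bigl\|\partial_x w_\epsilon\bigr\|_{L^4([0,T]\times\TT)}^3\,.
\]
The crucial observation is that $\|\partial_x w_\epsilon\|_{L^4([0,T]\times\TT)}$ is bounded uniformly in $\epsilon$ for the fixed $\tilde\omega$: expanding $\partial_x^2\bigl(\tilde u_\epsilon^{(\alpha_l+n+1)/2}\bigr)$ and integrating by parts on $\TT$ (as usual, performing the computation on the shifts $\tilde u_\epsilon+\delta$ and letting $\delta\searrow0$), one obtains for a.e.\ $t$ the pointwise bound $\int_\TT\tilde u_\epsilon^{\alpha_l+n-3}(\partial_x\tilde u_\epsilon)^4\,dx\lesssim_{\alpha_l,n}\int_\TT\bigl(\partial_x^2(\tilde u_\epsilon^{(\alpha_l+n+1)/2})\bigr)^2\,dx$, so that
\[
\bigl\|\partial_x w_\epsilon\bigr\|_{L^4([0,T]\times\TT)}^4\,\lesssim_{\alpha_l,n}\,\bigl\|\tilde u_\epsilon^{(\alpha_l+n+1)/2}\bigr\|_{L^2(0,T;H^2(\TT))}^2\,,
\]
and the right-hand side is bounded in $\epsilon$ precisely because $\tilde u_\epsilon^{(\alpha_l+n+1)/2}$ converges weakly in $L^2(0,T;H^2(\TT))$. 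Hence $\limsup_{\epsilon\searrow0}\bigl|\{\tilde u=0,\ |\tilde u_\epsilon^{n-2}(\partial_x\tilde u_\epsilon)^3|>\delta\}\bigr|\lesssim_{\delta}\eta^{(n+1-3\alpha_l)/4}$ for all $\eta\in(0,1)$, and $\eta\searrow0$ gives convergence in measure; Vitali's theorem then yields the $L^1$-convergence, and testing against $\partial_x\vp$ gives item~\ref{Item2}. Item~\ref{Item3} is treated the same way, with the exponent $(n+1-\alpha_l)/2>0$ in $\tilde u_\epsilon^{n-1}(\partial_x\tilde u_\epsilon)^2=\bigl(\tfrac{4}{\alpha_l+n+1}\bigr)^2\tilde u_\epsilon^{(n+1-\alpha_l)/2}(\partial_x w_\epsilon)^2$ and the corresponding uniform $L^{q}$-bound, $q>1$.

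I expect the handling of the degenerate set $\{\tilde u=0\}$ in item~\ref{Item2} to be the main obstacle: off the positivity set the only convergences at hand are the strong $L^2(0,T;H^1)$-convergence of the powers $\tilde u_\epsilon^{(\alpha_l+n+1)/2}$ and the weak $L^2$-convergence of $\partial_x\tilde u_\epsilon$, and neither controls $\partial_x\tilde u_\epsilon$ pointwise there; the way around it, as above, is to combine the $\alpha_l$-entropy smallness of the cubic term on $\{\tilde u_\epsilon\le\eta\}$ with the uniform-in-$\epsilon$ $L^4$-bound on $\partial_x w_\epsilon$ extracted from the $L^2(0,T;H^2)$-boundedness of $w_\epsilon^2$. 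A secondary technical point is the rigorous justification of the integration-by-parts identities for the merely $H^2$-in-space $\tilde u_\epsilon$, which is carried out on the regularizations $\tilde u_\epsilon+\delta$ exactly as in the proof of Proposition~\ref{Thm_Ito_s_formula_New}.
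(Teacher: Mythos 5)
Your argument is correct in substance and follows the same skeleton as the paper's proof: fix $\tilde\omega$ in the full-measure set of Corollary \ref{Cor_SS} and Lemma \ref{Lemma_additional_convergence}, extract a further subsequence along which $\tilde u_\epsilon\to\tilde u$ and $\partial_x\tilde u_\epsilon^{(\alpha_l+n+1)/2}\to\partial_x\tilde u^{(\alpha_l+n+1)/2}$ a.e.\ (hence $\partial_x\tilde u_\epsilon\to\partial_x\tilde u$ a.e.\ on $\{\tilde u>0\}$), split the space-time domain along $\{\tilde u>0\}$ versus $\{\tilde u=0\}$, use Vitali on the positivity set, and control the sums over $k$ by dominated convergence via \eqref{Consequence_sigma_New}. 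Your packaging of items \ref{Item4}--\ref{Item7} as weak--strong convergence of products is a harmless variant. The one genuinely different step is your treatment of $\{\tilde u=0\}$ in item \ref{Item2}: the paper disposes of it in one line by H\"older, writing $\int_{\{\tilde u=0\}}|\tilde u_\epsilon^{n-2}(\partial_x\tilde u_\epsilon)^3\partial_x\vp|\le\|\partial_x\vp\|_\infty\|\tilde u_\epsilon\|_{L^{p}(\{\tilde u=0\})}^{n-2}\|\partial_x\tilde u_\epsilon\|_{L^{r}}^{3}$ with $\tfrac{n-2}{p}+\tfrac{3}{r}=1$; the first factor tends to zero because $\tilde u_\epsilon\to\tilde u=0$ strongly in $L^{p}$ on that set, while the second stays bounded. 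This exploits $n-2>0$ directly and requires no information beyond the convergences in $\XL$ and $\XS$. Your route instead splits further along $\{\tilde u_\epsilon\le\eta\}$ and buys smallness from the weight $\tilde u_\epsilon^{(n+1-3\alpha_l)/4}$ together with a pathwise uniform bound on $\|\partial_x w_\epsilon\|_{L^4([0,T]\times\TT)}$. That works, but it is more machinery than the step needs.

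The only place where your write-up is thinner than it should be is precisely that pathwise bound. You cannot get it from \eqref{Eq253}, which controls an expectation, so you correctly fall back on the boundedness of $\tilde u_\epsilon^{(\alpha_l+n+1)/2}$ in $L^2(0,T;H^2(\TT))$ implied by the weak convergence in $\Xp$, combined with the inequality $\int_\TT\tilde u_\epsilon^{\alpha_l+n-3}(\partial_x\tilde u_\epsilon)^4\,dx\lesssim\int_\TT(\partial_x^2\tilde u_\epsilon^{(\alpha_l+n+1)/2})^2\,dx$. This is a Bernis-type inequality and is true, but the justification ``compute on the shifts $\tilde u_\epsilon+\delta$ and let $\delta\searrow0$'' is not as innocent as stated: while the left-hand side of the shifted inequality increases monotonically as $\delta\searrow0$, the right-hand side $\int(\partial_x^2(\tilde u_\epsilon+\delta)^{(\alpha_l+n+1)/2})^2$ does not obviously converge to, or stay dominated by, $\int(\partial_x^2\tilde u_\epsilon^{(\alpha_l+n+1)/2})^2$, since the two terms in its expansion behave with opposite monotonicity in $\delta$. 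One either needs the sharp identity $\int(\partial_x^2(w^2))^2=\tfrac43\int(\partial_x w)^4+4\int w^2(\partial_x^2 w)^2$ applied carefully on $\{\tilde u_\epsilon>0\}$, or a citation of the classical Bernis/Hardy-type inequality for non-negative $H^2$ functions. Since the paper's H\"older argument avoids this entirely, I would recommend replacing your treatment of $\{\tilde u=0\}$ by that shortcut.
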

	\begin{proof}We fix $\tilde{\omega} \in \tilde{\Omega}$ such that \eqref{Eq83} holds in $\X$ and \eqref{Eq84} holds in $L^2(0,T; H^1(\TT))$.
		The convergence \ref{Item1} follows from $\tilde{u}_{\epsilon}\to \tilde{u}$ in $\Xc$. For \ref{Item2}, \ref{Item3}, \ref{Item4} and \ref{Item5}, we consider an arbitrary subsequence such that it suffices to show that for another subsequence the respective convergence holds. By  $ \tilde{u}_\epsilon\to \tilde{u}$ in $\XL$ and $\tilde{u}_\epsilon^\frac{\alpha_l+n+1}{2} \,\to \, 
		\tilde{u}^\frac{\alpha_l+n+1}{2}$ in $L^2(0,T; H^1(\TT))$, we can choose this subsequence such that $\tilde{u}_\epsilon\to\tilde{u}$ and $\partial_x \tilde{u}_\epsilon^\frac{\alpha_l+n+1}{2} \,\to \, 
		\partial_x \tilde{u}^\frac{\alpha_l+n+1}{2} $  almost everywhere on $[0,T]\times \TT$ and therefore $\partial_x \tilde{u}_\epsilon \,\to \,\partial_x \tilde{u} $ on the set $\{\tilde{u}>0\}$. 
		
		We verify \ref{Item2}  by showing separately that 	
		\begin{align}
			&\label{Eq85}
			\int_0^t\int_{\{\tilde{u}=0\}}
			\tilde{u}_\epsilon^{n-2} (\partial_x \tilde{u}_{\epsilon})^3\partial_x\vp
			\, dx\, ds \,\to \, 0,\\& \label{Eq86}
			\int_0^t\int_{\{\tilde{u}>0\}}
			\tilde{u}_\epsilon^{n-2} (\partial_x \tilde{u}_{\epsilon })^3\partial_x\vp
			\, dx\, ds \,\to \, \int_0^t\int_{\{\tilde{u}>0\}}
			\tilde{u}^{n-2} (\partial_x \tilde{u})^3\partial_x\vp
			\, dx\, ds.
		\end{align} 
		Using that $\tilde{u}_\epsilon \to \tilde{u}$ in $\XL$ and $\XS$, \eqref{Eq85} follows by H\"older's inequality and \eqref{Eq86} by Vitali's convergence theorem. The claims \ref{Item3} and \ref{Item4} can be shown analogously. 
		
		For \ref{Item5} we  rewrite 
		\begin{align}&
			\sum_{k\in \ZZ}\int_0^t \langle
			\sigma_{k,\epsilon} q'(\tilde{u}_\epsilon) \partial_x (\sigma_{k,\epsilon} q(\tilde{u}_\epsilon)) ,\partial_x \vp\rangle\, ds\\&\quad =\, 
			\sum_{k\in \ZZ}\int_0^t \langle
			\sigma_{k,\epsilon}^2(q'(\tilde{u}_\epsilon))^2 \partial_x \tilde{u}_\epsilon  ,\partial_x \vp\rangle\,+\,
			\langle
			\sigma_{k,\epsilon}\partial_x \sigma_{k,\epsilon} q(\tilde{u}_\epsilon)q'(\tilde{u}_\epsilon), \partial_x \vp \rangle
			\, ds
		\end{align}
		and observe that for the individual summands
		\begin{align}&
			\tfrac{n^2}{4}
			\int_0^t \langle  \sigma_{k,\epsilon}^2
			\tilde{u}_\epsilon^{n-2} \partial_x \tilde{u}_\epsilon,\partial_x \vp\rangle\, ds \,\to\, 
			\tfrac{n^2}{4}
			\int_0^t \langle  \sigma_{k}^2
			\tilde{u}^{n-2} \partial_x \tilde{u},\partial_x \vp\rangle\, ds ,\\&
			\tfrac{n}{2}
			\int_0^t \langle
			\sigma_{k,\epsilon}\partial_x \sigma_{k,\epsilon} \tilde{u}_\epsilon^{n-1} , \partial_x \vp\rangle \, ds \,\to \,
			\tfrac{n}{2} \int_0^t \langle\sigma_k\partial_x \sigma_k \tilde{u}^{n-1} , \partial_x \vp\rangle\, ds,
		\end{align}
		as $\epsilon\searrow 0$, since $\tilde{u}_\epsilon \to \tilde{u}$ in $\XL$, $\XS$ and $\sigma_{k,\epsilon} \to \sigma_k$ in $C^1(\TT)$ for fixed $k\in \ZZ$ by \eqref{Eq_Def_sigma}, \eqref{Eq87} and \eqref{Eq88}.  Hence, by the dominated convergence theorem, it suffices to find a summable, dominating sequence of
		\[
		\biggl( 
		\int_0^t |\langle  \sigma_{k,\epsilon}^2
		\tilde{u}_\epsilon^{n-2} \partial_x \tilde{u}_\epsilon,\partial_x \vp\rangle| \,+\, |
		\langle
		\sigma_{k,\epsilon}\partial_x \sigma_{k,\epsilon} \tilde{u}_\epsilon^{n-1} , \partial_x \vp\rangle|
		\, ds\biggr)_{k\in \ZZ}
		\]
		independent of $\epsilon$ to conclude \ref{Item5}.
		To this end, we estimate using H\"older's inequality
		\begin{align}&
			\int_0^t 
			|\langle \sigma_{k,\epsilon}^2
			\tilde{u}_\epsilon^{n-2} \partial_x \tilde{u}_\epsilon,\partial_x \vp\rangle|\,+\, | \langle
			\sigma_{k,\epsilon}\partial_x \sigma_{k,\epsilon} \tilde{u}_\epsilon^{n-1} , \partial_x \vp\rangle| \, ds\\&\quad\le\,
			\|\partial_x \vp\|_{L^\infty(\TT)}  \int_0^t \bigl(\| \sigma_{k,\epsilon}\|_{L^\infty(\TT)}^2
			\|\tilde{u}_\epsilon\|_{L^{2(n-2)}(\TT)}^{n-2} \|\partial_x \tilde{u}_\epsilon\|_{L^{2}(\TT)}\,+\,  \|\sigma_{k}\|_{L^\infty(\TT)}
			\|\partial_x \sigma_{k}\|_{L^\infty(\TT)}
			\|\tilde{u}_\epsilon\|_{L^{n-1}(\TT)}^{n-1} \bigr)\, ds
			\\&\quad \le\,
			\|\partial_x \vp\|_{L^\infty(\TT)}  \bigl(
			\|\tilde{u}_\epsilon\|_{L^{2(n-2)}([0,T]\times\TT)}^{n-2} \|\partial_x \tilde{u}_\epsilon\|_{L^2([0,T]\times \TT)}\,+\, 	\|\tilde{u}_\epsilon\|_{L^{n-1}([0,T]\times\TT)}^{n-1}
			\bigr)\|\sigma_k\|_{C^1(\TT)}^2.
		\end{align}
		The prefactor on the right-hand side is uniformly bounded in $\epsilon$, since $\tilde{u}_\epsilon \to \tilde{u}$ in $\XL$, $\XS$ and the sequence $(\|\sigma_k\|_{C^1(\TT)}^2)_{k\in \ZZ}$ is summable by \eqref{Consequence_sigma_New}, which finishes the proof of \ref{Item5}. 
		For \ref{Item6}, we proceed analogously and observe first that for each summand
		\begin{align}&
			\biggl|\int_0^t 
			\langle
			\sigma_{k,\epsilon} q(\tilde{u}_\epsilon), \partial_x \vp
			\rangle^2 -
			\langle
			\sigma_{k} q(\tilde{u}), \partial_x \vp
			\rangle^2
			\,ds\biggr|
			\\&\quad\le \,
			\int_0^t |
			\langle
			\sigma_{k,\epsilon} q(\tilde{u}_\epsilon), \partial_x \vp
			\rangle -
			\langle
			\sigma_{k} q(\tilde{u}), \partial_x \vp
			\rangle|\,
			|
			\langle
			\sigma_{k,\epsilon} q(\tilde{u}_\epsilon), \partial_x \vp
			\rangle +  
			\langle
			\sigma_{k} q(\tilde{u}), \partial_x \vp
			\rangle|
			\,ds
			\\&\quad\le \,
			\|\partial_x \vp\|_{L^2(\TT)}^2
			\int_0^t \| 
			\sigma_{k,\epsilon} q(\tilde{u}_\epsilon)- 
			\sigma_{k} q(\tilde{u})
			\|_{L^2(\TT)}
			\| 
			\sigma_{k,\epsilon} q(\tilde{u}_\epsilon)+
			\sigma_{k} q(\tilde{u})
			\|_{L^2(\TT)}\, ds
			\\&\quad \le \,
			\|\partial_x \vp\|_{L^2(\TT)}^2
			\| 
			\sigma_{k,\epsilon} q(\tilde{u}_\epsilon)- 
			\sigma_{k} q(\tilde{u})
			\|_{L^2([0,T]\times\TT)}
			\| 
			\sigma_{k,\epsilon} q(\tilde{u}_\epsilon)+
			\sigma_{k} q(\tilde{u})
			\|_{L^2([0,T]\times\TT)},
		\end{align}
		which tends to $0$ as $\epsilon\searrow 0$, since $\tilde{u}_\epsilon\to \tilde{u}$ in $\XL$ and $\sigma_{k,\epsilon} \to \sigma_k$ in $L^\infty(\TT)$. We use again the dominated convergence theorem together with
		\begin{align} \int_0^t 
			\langle
			\sigma_{k,\epsilon} q(\tilde{u}_\epsilon), \partial_x \vp
			\rangle^2
			\,ds\,  &\le \,  \int_0^t 
			\| \sigma_{k,\epsilon} \|_{L^\infty(\TT)}^2  \| \tilde{u}_\epsilon \|_{L^n(\TT)}^n \| \partial_x \vp \|_{L^2(\TT)}^2
			\,ds
			\\&\le \,
			\| \partial_x \vp \|_{L^2(\TT)}^2
			\| \tilde{u}_\epsilon \|_{L^n([0,T]\times\TT)}^n 	\| \sigma_{k} \|_{L^\infty(\TT)}^2,
		\end{align}
		the convergence $\tilde{u}_\epsilon\to \tilde{u}$ in $\XL$ and \eqref{Consequence_sigma_New} to conclude \ref{Item6}.
		The convergence \ref{Item7} follows from $\tilde{u}_\epsilon \to \tilde{u}$ in $\XL$ and $\sigma_{k,\epsilon} \to \sigma_k$ in $L^\infty(\TT)$.
	\end{proof}
	\begin{lemma}\label{Lemma_SPDE}
		For every $\vp\in C^\infty(\TT)$, $j\in \NN$ and $t\in [0,T]$ we have that
		\begin{align}\begin{split}
				\label{Eq89}  \mathbbm{1}_{\tilde{A}_j}\left[
				\langle\tilde{u}(t), \vp\rangle\,-\, \langle\tilde{u}(0), \vp\rangle\right]\,=\, \mathbbm{1}_{\tilde{A}_j}\biggl[& \tfrac{n(n-1)}{2}\int_0^t \langle
				\tilde{u}^{n-2} (\partial_x \tilde{u})^3, \partial_x\vp\rangle\, ds
				\\&+\,
				\tfrac{3n}{2} \int_0^t\langle \tilde{u}^{n-1}(\partial_x\tilde{u})^2, \partial_x^2\vp\rangle\, ds\,+\,
				\int_0^t\langle \tilde{u}^{n}\partial_x \tilde{u}, \partial_x^3\vp \rangle\,ds
				\\&-\,\tfrac{1}{2}\sum_{k\in \ZZ}\int_0^t \langle
				\sigma_k q'(\tilde{u}) \partial_x (\sigma_k q(\tilde{u})) ,\partial_x \vp\rangle\, ds
				\\&-\,\sum_{k\in \ZZ} \int_0^t 
				\langle
				\sigma_k q(\tilde{u}), \partial_x \vp
				\rangle
				\, d\tilde{\beta}_s^{(k)}\biggr].
			\end{split}
		\end{align}
	\end{lemma}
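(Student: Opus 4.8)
The plan is to transfer the approximate equation \eqref{Eq55_New} — equivalently, via the integration by parts \eqref{Eq39}, its weak form in the spirit of \eqref{Eq208_New} — to the new probability space for the copies $\tilde u_\epsilon$, localise by the indicators $\tilde\chi^{(j)}_\epsilon$, and pass to the limit $\epsilon\searrow 0$; the delicate point will be identifying the limit of the stochastic integral, which is exactly what Lemma~\ref{Lemma_Limits} \ref{Item6}, \ref{Item7} were prepared for. \emph{Step 1 (the equation on $(\tilde\Omega,\tilde{\mathfrak A},\tilde\PP)$).} For each $\epsilon$ let $\tilde{\mathfrak F}^\epsilon$ be the augmentation of the filtration generated by $(\tilde\chi^{(j)}_\epsilon)_{j\in\NN}$, $\tilde u_\epsilon(s)$ and $(\tilde\beta^{(k)}_\epsilon(s))_{k\in\ZZ}$ for $0\le s\le t$. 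As in the proof of Lemma~\ref{Lemma_BM} (cf. \cite[Lemma~5.8]{fischer_gruen_2018}), one uses that by Corollary~\ref{Cor_SS} the tuple $\big((\tilde\chi^{(j)}_\epsilon)_j,(\tilde\beta^{(k)}_\epsilon)_k,\tilde u_\epsilon\big)$ has the same law as $\big((\mathbbm{1}_{A_j})_j,(\beta^{(k)})_k,u_\epsilon\big)$ to conclude that the $(\tilde\beta^{(k)}_\epsilon)_k$ are independent $\tilde{\mathfrak F}^\epsilon$-Brownian motions, that the $\tilde\chi^{(j)}_\epsilon$ are $\tilde{\mathfrak F}^\epsilon_0$-measurable, and that \eqref{Eq55_New} holds $\tilde\PP$-a.s., for every $\vp\in C^\infty(\TT)$ and $t\in[0,T]$, with $\tilde u_\epsilon,\tilde\beta^{(k)}_\epsilon$ in place of $u_\epsilon,\beta^{(k)}$ (the stochastic integral being identified through the covariations of the left-hand side minus the drift with the $\tilde\beta^{(k)}_\epsilon$, which are preserved under equality in law). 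Multiplying by the $\tilde{\mathfrak F}^\epsilon_0$-measurable $\tilde\chi^{(j)}_\epsilon$ and rewriting the thin-film drift via \eqref{Eq39}, the process
\[
\tilde M_\epsilon(t)\,:=\,\tilde\chi^{(j)}_\epsilon\Bigl[\langle\tilde u_\epsilon(t),\vp\rangle-\langle\tilde u_\epsilon(0),\vp\rangle-\int_0^t\bigl(\text{weak thin-film drift}+\text{Stratonovich correction}\bigr)\,ds\Bigr]
\]
is a continuous, square-integrable $\tilde{\mathfrak F}^\epsilon$-martingale with $\langle\tilde M_\epsilon\rangle_t=\tilde\chi^{(j)}_\epsilon\sum_k\int_0^t\langle\sigma_{k,\epsilon}q(\tilde u_\epsilon),\partial_x\vp\rangle^2\,ds$ and $\langle\tilde M_\epsilon,\tilde\beta^{(k)}_\epsilon\rangle_t=-\tilde\chi^{(j)}_\epsilon\int_0^t\langle\sigma_{k,\epsilon}q(\tilde u_\epsilon),\partial_x\vp\rangle\,ds$.

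\emph{Step 2 (limit of the deterministic terms).} Fix $\vp$ and $t$ and pass to a subsequence along which the convergences of Lemma~\ref{Lemma_Limits} hold $\tilde\PP$-a.s. Since $\tilde\chi^{(j)}_\epsilon\to\tilde\chi^{(j)}=\mathbbm{1}_{\tilde A_j}$ $\tilde\PP$-a.s. and is $\{0,1\}$-valued, it is eventually constant along the subsequence for $\tilde\PP$-a.e. $\tilde\omega$; combined with Lemma~\ref{Lemma_Limits} \ref{Item1}--\ref{Item5}, the equalities $\tilde u=\tilde f=\tilde g$ from Corollary~\ref{Cor_SS}, and $\sigma_{k,\epsilon}\to\sigma_k$ in $C^1(\TT)$, one obtains $\tilde M_\epsilon(t)\to\tilde M(t)$ $\tilde\PP$-a.s., where $\tilde M(t)$ equals $\mathbbm{1}_{\tilde A_j}\big[\langle\tilde u(t),\vp\rangle-\langle\tilde u(0),\vp\rangle\big]$ minus the deterministic integrals on the right-hand side of \eqref{Eq89}.

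\emph{Step 3 (limit of the stochastic integral).} It remains to identify $\tilde M$. The estimate \eqref{Eq240} together with $\sum_k\|\sigma_k\|_{C(\TT)}^2<\infty$ from \eqref{Consequence_sigma_New} makes $\tilde\chi^{(j)}_\epsilon\|q(\tilde u_\epsilon)\|_{L^2(\TT)}^2$ $\tilde\PP\otimes dt$-uniformly integrable on $\tilde A_j$; using this, $\tilde u_\epsilon\to\tilde u$ in $\XL$, Lemma~\ref{Lemma_Limits} \ref{Item6}, \ref{Item7}, $\tilde\beta^{(k)}_\epsilon\to\tilde\beta^{(k)}$ in $C([0,T])$ and Step 2, one may pass to the limit in the defining relations of the martingale $\tilde M_\epsilon$ (tested against bounded continuous $\tilde{\mathfrak G}_s$-measurable functionals of the $\epsilon$-copies, in the spirit of the martingale problem) to conclude that $\tilde M$ is a continuous, square-integrable $\tilde{\mathfrak F}$-martingale with $\tilde M(0)=0$, $\langle\tilde M\rangle_t=\mathbbm{1}_{\tilde A_j}\sum_k\int_0^t\langle\sigma_kq(\tilde u),\partial_x\vp\rangle^2\,ds$ and $\langle\tilde M,\tilde\beta^{(k)}\rangle_t=-\mathbbm{1}_{\tilde A_j}\int_0^t\langle\sigma_kq(\tilde u),\partial_x\vp\rangle\,ds$. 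Since $\sum_k\int_0^T\langle\sigma_kq(\tilde u),\partial_x\vp\rangle^2\,ds<\infty$ $\tilde\PP$-a.s. (by $\tilde u\in\XL$ and \eqref{Consequence_sigma_New}), the process $\tilde M(t)+\mathbbm{1}_{\tilde A_j}\sum_k\int_0^t\langle\sigma_kq(\tilde u),\partial_x\vp\rangle\,d\tilde\beta^{(k)}_s$ is then a continuous martingale starting at $0$ with vanishing quadratic variation, hence identically zero, which is precisely \eqref{Eq89}.

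\emph{Main obstacle.} The principal difficulty is Step 3: the stochastic integrals are driven by the different Brownian motions $\tilde\beta^{(k)}_\epsilon$ for each $\epsilon$ and the integrands converge only pathwise, so the limit cannot be formed directly and must be pinned down through the martingale/quadratic-variation characterisation — the role of Lemma~\ref{Lemma_Limits} \ref{Item6}, \ref{Item7} — while the localisation by $\tilde A_j$, and the resulting finite moments \eqref{Eq240}, is what supplies the uniform integrability needed to pass to the limit in the second moments.
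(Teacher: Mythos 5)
Your proposal is correct and follows essentially the same route as the paper: pass the martingale identities for $\tilde{M}_\epsilon$ (its increments, its square minus the quadratic variation, and its product with $\tilde{\beta}^{(k)}_\epsilon$ minus the cross-variation, tested against bounded continuous functionals of the data up to time $s$) to the limit using Lemma \ref{Lemma_Limits}, the moment bounds from \eqref{Eq240}, and uniform integrability, and then identify $\tilde{M}$ as the stochastic integral via the vanishing-quadratic-variation argument (the paper invokes \cite[Proposition A.1]{hofmanova2013} for exactly this step). The only cosmetic difference is that the paper obtains the identities \eqref{Eq93} directly from equidistribution with the original space, where $M_\epsilon$ is a genuine stochastic integral, rather than first constructing the equation and a filtration for each $\tilde{u}_\epsilon$ on the new space as in your Step 1.
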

	\begin{proof} 
		Throughout this proof, we fix $j\in \NN$ and $\vp\in C^\infty(\TT)$ and define the process
		\begin{align}
			\tilde{M}(t)\,=\, \mathbbm{1}_{\tilde{A}_j}\biggl[&	\langle\tilde{u}(t), \vp\rangle\,-\, \langle\tilde{u}(0), \vp\rangle\,-\, \tfrac{n(n-1)}{2}\int_0^t \langle
			\tilde{u}^{n-2} (\partial_x \tilde{u})^3, \partial_x\vp\rangle\, ds\\&
			-\,\tfrac{3n}{2} \int_0^t \langle \tilde{u}^{n-1}(\partial_x\tilde{u})^2, \partial_x^2\vp\rangle\, ds\,-\,
			\int_0^t\langle \tilde{u}^{n}\partial_x \tilde{u}, \partial_x^3\vp \rangle\,ds\\&+\,\tfrac{1}{2}\sum_{k\in \ZZ}\int_0^t \langle
			\sigma_k q'(\tilde{u}) \partial_x (\sigma_k q(\tilde{u})) ,\partial_x \vp\rangle\, ds\biggr]
		\end{align}
		and the approximating processes
		\begin{align}\begin{split}
				\label{Eq90}
				\tilde{M}_\epsilon(t)\,=\,\tilde{\chi}_\epsilon^{(j)} \biggl[& 	\langle\tilde{u}_\epsilon(t), \vp\rangle\,-\, \langle\tilde{u}_\epsilon(0), \vp\rangle\,-\, \tfrac{n(n-1)}{2}\int_0^t \langle
				\tilde{u}_\epsilon^{n-2} (\partial_x \tilde{u}_\epsilon)^3, \partial_x\vp\rangle \, ds\\&
				-\,
				\tfrac{3n}{2} \int_0^t\langle \tilde{u}_\epsilon^{n-1}(\partial_x\tilde{u}_\epsilon)^2, \partial_x^2\vp\rangle\, ds\,-\,
				\int_0^t\langle \tilde{u}_\epsilon^{n}\partial_x \tilde{u}_\epsilon, \partial_x^3\vp \rangle\,ds\\&+\,\tfrac{1}{2}\sum_{k\in \ZZ}\int_0^t \langle
				\sigma_{k,\epsilon} q'(\tilde{u}_\epsilon) \partial_x (\sigma_{k,\epsilon} q(\tilde{u}_\epsilon)) ,\partial_x \vp\rangle\, ds\biggr].
			\end{split}
		\end{align} As a consequence of Lemma \ref{Lemma_Limits} we have indeed $\tilde{M}_\epsilon(t)\to \tilde{M}(t)$ as $\epsilon\searrow 0$. Now let 
		\begin{equation}\label{Eq254}
		\phi \colon\,  \prod_{j=1}^{\infty} \RR \times C([0,s]; H^{-1}(\TT)) \times \prod_{k\in \ZZ} C([0,s]) \,\to\, \RR
		\end{equation}
		be continuous and bounded and define 
		\begin{align}&
			\tilde{\rho} \,=\, \phi\bigl(( \tilde{\chi}^{(j)})_{j\in \NN}, \tilde{u}, (\tilde{\beta}^{(k)})_{k\in \ZZ}  \bigr),\\&
			\tilde{\rho}_\epsilon \,=\, \phi\bigl((\tilde{\chi}_\epsilon^{(j)})_{j\in \NN}, \tilde{u}_\epsilon, (\tilde{\beta}_\epsilon^{(k)})_{k\in \ZZ} \bigr),
		\end{align}
		such that $\tilde{\rho}_\epsilon\to \tilde{\rho}$ as $\epsilon\searrow 0$ by \eqref{Eq83}. Defining $M_\epsilon$ on the original probability space $(\Omega, \mathfrak{A}, \PP)$ as the right-hand side of \eqref{Eq90} with $\tilde{u}_\epsilon$ replaced by $u_\epsilon$, we find that
		\[
		M_\epsilon(t)\,=\, - \mathbbm{1}_{A_j}\int_0^t\langle
		\Phi_\epsilon d\beta_s, \partial_x \vp
		\rangle,
		\]
		where $\Phi_\epsilon$ and $\beta$ are defined as in the proof of Lemma \ref{Lemma_reg_stoch_int_New}, because of Consequence \ref{Consequences_Ass_1} \ref{Item_Consequences_6} and  \eqref{Eq39}.
		The quadratic variation process of $M_\epsilon$ is given by 
		\begin{align}\begin{split}\label{Eq91}
				\mathbbm{1}_{A_j}\sum_{k\in \ZZ} \int_0^t \langle \sigma_{k,\epsilon} q(u_\epsilon), \partial_x \vp \rangle^2\, ds\, &\le \,
				\mathbbm{1}_{A_j}\sum_{k\in \ZZ} \int_0^t \|\sigma_{k,\epsilon}\|_{L^\infty(\TT)}^2 \|u_\epsilon \|_{L^n(\TT)}^n \|\partial_x \vp\|_{L^2(\TT)}^2\, ds
				\\&\lesssim_\Lambda\, \mathbbm{1}_{A_j} \|\partial_x \vp \|_{L^2(\TT)}^2 \|u_\epsilon\|_{L^n([0,T]\times \TT)}^n,
			\end{split}
		\end{align}where we used \eqref{Consequence_sigma_New} in the second inequality,
		which is integrable by Consequence \ref{Consequences_Ass_1} \ref{Item_Consequences_5} and thus $M_\epsilon$ is a square integrable martingale.	
		In particular, since $(\tilde{M}_\epsilon, (\tilde{\chi}_\epsilon^{(j)})_{j\in \NN}, \tilde{u}_\epsilon, (\tilde{\beta}_\epsilon^{(k)})_{k\in \ZZ} )$ has the same distribution as $({M}_\epsilon, (\mathbbm{1}_{A_j})_{j\in \NN}, {u}_\epsilon, ({\beta}^{(k)})_{k\in \ZZ} )$ by Corollary \ref{Cor_SS}, we obtain that
		\begin{align}\begin{split}
				\label{Eq93}&
				\tilde{\EE}\bigl[
				(\tilde{M}_\epsilon(t)-\tilde{M}_\epsilon(s)) \tilde{\rho}_\epsilon
				\bigr]\,=\, 0,\\&
				\tilde{\EE}\biggl[
				\biggl(\tilde{M}_\epsilon^2(t) -
				\tilde{M}_\epsilon^2(s) - \tilde{\chi}_\epsilon^{(j)} \sum_{k\in \ZZ}  \int_s^t 
				\langle
				\sigma_{k,\epsilon} q(\tilde{u}_\epsilon), \partial_x \vp
				\rangle^2
				\,d\tau  \biggr)\tilde{\rho}_\epsilon
				\biggr]\,=\, 0,\\&
				\tilde{\EE}\biggl[
				\biggl(\tilde{M}_\epsilon(t)\tilde{\beta}_\epsilon^{(k)}(t) -
				\tilde{M}_\epsilon(s)\tilde{\beta}_\epsilon^{(k)}(s) - \tilde{\chi}_\epsilon^{(j)} \int_s^t 
				\langle
				\sigma_{k,\epsilon} q(\tilde{u}_\epsilon), \partial_x \vp
				\rangle
				\,d\tau  \biggr)\tilde{\rho}_\epsilon
				\biggr]\,=\, 0. 
			\end{split}
		\end{align}
		Next, we note that by \eqref{Eq91} and the Burkholder-Davis-Gundy inequality
		\begin{align}\label{Eq94}
			\tilde{\EE} \Bigl[
			\sup_{0\le t\le T} |\tilde{M}_\epsilon(t)|^\rho
			\Bigr]\,=\,
			{\EE} \Bigl[
			\sup_{0\le t\le T} |{M}_\epsilon(t)|^\rho
			\Bigr]
			\,\lesssim_{\rho,\Lambda}\, 
			\|\partial_x \vp \|_{L^2(\TT)}^\rho \EE\Bigl[
			\mathbbm{1}_{A_j} \|u_\epsilon\|_{L^n([0,T]\times \TT)}^\frac{\rho n}{2}
			\Bigr],
		\end{align}
		which is uniformly in $\epsilon$ bounded by \eqref{Eq240} for $\rho\in (2,\frac{14}{n})$. Using \eqref{Eq91} and \eqref{Eq240} again, we also obtain that
		\begin{equation}\label{Eq95}
			\tilde\EE\biggl[\biggl(  \tilde{\chi}_\epsilon^{(j)} \sum_{k\in \ZZ}
			\int_0^T
			\langle
			\sigma_{k,\epsilon} q(\tilde{u}_\epsilon), \partial_x \vp
			\rangle^2
			\,dt\biggr)^\frac{\rho}{2}
			\biggr]
		\end{equation}
		and more simply
		\begin{align}\begin{split}&
				\label{Eq96}
				\tilde{\EE}\biggl[
				\biggl(  \tilde{\chi}_\epsilon^{(j)} \int_0^T |
				\langle
				\sigma_{k,\epsilon} q(\tilde{u}_\epsilon), \partial_x \vp
				\rangle|
				\,dt \biggr)^\rho
				\biggr] \\&\quad \le\,\bigl( 
				\|\sigma_{k,\epsilon}\|_{L^\infty(\TT)} \|\partial_x \vp\|_{L^\infty(\TT)}\bigr)^\rho\EE\Bigl[
				\mathbbm{1}_{A_j} \|u_\epsilon\|_{L^\frac{n}{2}([0,T]\times \TT)}^\frac{\rho n}{2}
				\Bigr] 
			\end{split}
		\end{align} 
		are uniformly in $\epsilon$ bounded for these $\rho$. Since also 
		\begin{equation}\label{Eq97}
			\tilde{\EE}\Bigl[
			\sup_{0\le t\le T} |\tilde{\beta}_\epsilon^{(k)}(t)|^\upsilon
			\Bigr]\,=\, \EE\Bigl[
			\sup_{0\le t\le T} |{\beta}^{(0)}(t)|^\upsilon
			\Bigr]\,<\, \infty
		\end{equation}
		for all $\epsilon$ and $\upsilon\in (1,\infty)$, we can use uniform integrability of the random variables in \eqref{Eq93} and the almost sure convergences $\tilde{M}_\epsilon(t)\to \tilde{M}(t)$, $\tilde{\rho}_\epsilon\to \tilde{\rho}$, Lemma \ref{Lemma_Limits} \ref{Item6} and \ref{Item7}, $\tilde{\beta}_\epsilon^{(k)}\to \tilde{\beta}^{(k)}$ in $\XBM$  as well as  $\tilde{\chi}_\epsilon^{(j)} \to \mathbbm{1}_{\tilde{A}_j}$ in $\Xind$ as $\epsilon\searrow 0$, to conclude 
		\begin{align}\begin{split}\label{Eq92}
				&
				\tilde{\EE}\bigl[
				(\tilde{M}(t)-\tilde{M}(s)) \tilde{\rho}
				\bigr]\,=\, 0,\\&
				\tilde{\EE}\biggl[
				\biggl(\tilde{M}^2(t) -
				\tilde{M}^2(s) - \mathbbm{1}_{\tilde{A}_j}\sum_{k\in \ZZ}  \int_s^t 
				\langle
				\sigma_{k} q(\tilde{u}), \partial_x \vp
				\rangle^2
				\,d\tau  \biggr)\tilde{\rho}
				\biggr]\,=\, 0,\\&
				\tilde{\EE}\biggl[
				\biggl(\tilde{M}(t)\tilde{\beta}^{(k)}(t) -
				\tilde{M}(s)\tilde{\beta}^{(k)}(s) - \mathbbm{1}_{\tilde{A}_j}\int_s^t 
				\langle
				\sigma_{k} q(\tilde{u}), \partial_x \vp
				\rangle
				\,d\tau  \biggr)\tilde{\rho}
				\biggr]\,=\, 0. 
			\end{split}
		\end{align}
		An application of the monotone class theorem \cite[Theorem 2.12.9]{Bogachev_2007} yields that \eqref{Eq92} holds for any $\tilde{\rho}$, which is bounded and measurable with respect to the sigma-field generated by random variables of the form
		\[\phi\bigl(( \tilde{\chi}^{(j)})_{j\in \NN}, \tilde{u}, (\tilde{\beta}^{(k)})_{k\in \ZZ}  \bigr)\]
		with $\phi$ as in \eqref{Eq254} continuous and bounded.
		Arguing as in \cite[Remark 5.11]{Sauerbrey_2021} one finds that this sigma-field coincides with $\tilde{\mathfrak{G}}_s$. Another application of Vitali's convergence theorem, using continuity in time of the random variables in \eqref{Eq92} and the moment estimates \eqref{Eq94}, \eqref{Eq95}, \eqref{Eq96} and \eqref{Eq97} once more yields that \eqref{Eq92} holds also for bounded, $\tilde{\mathfrak{F}}_s$-measurable $\tilde{\rho}$. Consequently, an application of \cite[Proposition A.1]{hofmanova2013} leads to \eqref{Eq89}.
	\end{proof}
Before completing the proof of Theorem \ref{thm_main}, we deduce versions of the a-priori estimates, which we derived for ${u}_\epsilon$,  also for $\tilde{u}$.
\begin{lemma}\label{Lemma_APRIORI}
We assume that $p\in  (n+4,7)$, $r\in(\frac{n+4}{2},\frac{7}{2})$,  $\gamma\in(0,\frac{1}{2})$, $\mu\in (\frac{n+4}{n+2},\frac{7}{n+2})$, $\nu\in (1,\frac{7}{n+4})$ and $\alpha\in (-1,2-n)$.
\begin{enumerate}[label=(\roman*)]
	\item \label{Item_AP_1} The estimates \eqref{Eq224} and \eqref{Eq225} hold, whenever their right-hand side is finite.
	\item \label{Item_AP_2} The estimate \eqref{Eq243} holds with $p_{\mu,\nu}=\max\{ \mu(n+2), \nu(n+4) \}$, if the right-hand side is finite.
	\item \label{Item_AP_3} We have almost surely $ \tilde{u}^\frac{\alpha+n+1}{4}\in L^4(0,T;W^{1,4}(\TT))$, $ \tilde{u}^\frac{\alpha+n+1}{2}\in L^2(0,T;H^2(\TT)) $ and it holds \eqref{Eq210}, whenever its right-hand side is finite.
\end{enumerate}
\end{lemma}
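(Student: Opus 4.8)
The plan is to transfer each bound from the approximations $u_\epsilon$ to $\tilde u$ by combining the almost sure convergences $\tilde u_\epsilon\to\tilde u$ in $\X$ from \eqref{Eq83}, the equidistribution $(\tilde\chi_\epsilon^{(j)},\tilde u_\epsilon)\sim(\mathbbm 1_{A_j},u_\epsilon)$ of Corollary \ref{Cor_SS}, lower semicontinuity of the relevant norms/functionals, and Fatou's lemma, working separately on each set $\tilde A_j$ of the partition from Lemma \ref{Lemma_new_partition} and summing over $j$ at the end. Since $(\tilde A_j)_{j\in\NN}$ and $(A_j)_{j\in\NN}$ are partitions and the implicit constants in Proposition \ref{Thm_Ito_s_formula_New} and Lemmas \ref{lemma_int_p}, \ref{Lemma_int_r}, \ref{Lemma_temporal_regularity_New} depend only on the fixed parameters (not on $j$), it suffices to prove, for each $j$, an inequality of the form $\tilde{\EE}[\mathbbm 1_{\tilde A_j}(\,\cdot\,)]\lesssim\EE[\mathbbm 1_{A_j}(\,\cdot\,)]$ with the right-hand sides displayed in \eqref{Eq224}, \eqref{Eq225}, \eqref{Eq243}, \eqref{Eq210}, and then sum in $j$ by monotone convergence. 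Crucially, rather than using the simplified estimates \eqref{Eq240}, \eqref{Eq251}, \eqref{Eq250}, \eqref{Eq253} (whose right-hand sides $(j+1)^{\cdots}$ are not summable), I will use the $\mathfrak F_0$-conditional estimates \eqref{Eq13_New}, \eqref{Eq38_New}, \eqref{Eq33}, \eqref{Eq75_New2} directly: multiplying by $\mathbbm 1_{A_j}$, taking expectations, and letting $\epsilon\searrow0$, which is legitimate by dominated convergence on $A_j$, where $\|u_0\|_{\MM(\TT)}$ is bounded and every power of $\|u_0\|_{\MM(\TT)}+\epsilon$ and of $\epsilon$ occurring is nonnegative (using $n\in(2,3)$, $\alpha>-1$, $p\in(n+4,7)$, $r\in(\tfrac{n+4}2,\tfrac72)$, and that $n-1-\tfrac{n^2}{p}=\tfrac{3n-4}{n+4}>0$ at $p=n+4$ and hence on the whole range). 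This produces exactly the $\|u_0\|_{\MM(\TT)}$-dependent right-hand sides claimed.

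For \ref{Item_AP_1} I would use that $\tilde u_\epsilon\to\tilde u$ in $\XL$ gives $\|\tilde u_\epsilon\|_{L^p([0,T]\times\TT)}\to\|\tilde u\|_{L^p([0,T]\times\TT)}$ for $p<7$ (since $L^{p_l}\hookrightarrow L^p$ on the finite measure space once $p_l>p$), and that $\tilde u_\epsilon\to\tilde u$ in $\XS$ gives $\partial_x\tilde u_\epsilon\rightharpoonup\partial_x\tilde u$ weakly in $L^r([0,T]\times\TT)$ for $r<\tfrac72$, whence $\|\partial_x\tilde u\|_{L^r}\le\liminf_\epsilon\|\partial_x\tilde u_\epsilon\|_{L^r}$ by weak lower semicontinuity of the norm. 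Since $\tilde\chi_\epsilon^{(j)}$ and $\mathbbm 1_{\tilde A_j}$ are $\{0,1\}$-valued and $\tilde\chi_\epsilon^{(j)}\to\mathbbm 1_{\tilde A_j}$ almost surely they are eventually equal, so Fatou together with equidistribution gives $\tilde{\EE}[\mathbbm 1_{\tilde A_j}\|\tilde u\|_{L^p}^p]\le\liminf_\epsilon\EE[\mathbbm 1_{A_j}\|u_\epsilon\|_{L^p}^p]$ and similarly for $\partial_x$; inserting \eqref{Eq38_New}, \eqref{Eq33}, passing $\epsilon\searrow0$ and summing over $j$ yields \eqref{Eq224}, \eqref{Eq225}. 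For \ref{Item_AP_2} I first record that $H^{\kappa_l}(\TT)\hookrightarrow W^{-3,\mu}(\TT)$: indeed $\mu<\tfrac{7}{n+2}<2$, so $W^{3,\mu'}(\TT)\hookrightarrow H^3(\TT)$ and, dualising under the $L^2$ pairing, $H^{-3}(\TT)\hookrightarrow W^{-3,\mu}(\TT)$, while $\kappa_l\ge-1\ge-3$. Hence $\tilde u_\epsilon\to\tilde u$ in $\Xc$ implies convergence in $C([0,T];W^{-3,\mu}(\TT))$, along which the $W^{\gamma,\frac{2\nu}{2-\nu}}(0,T;W^{-3,\mu}(\TT))$-norm is lower semicontinuous (apply Fatou to the Slobodeckij double integral, using that the $L^{\frac{2\nu}{2-\nu}}(0,T;W^{-3,\mu})$-part converges), and the scheme above combined with \eqref{Eq75_New2} gives \eqref{Eq243} with $p_{\mu,\nu}=\max\{\mu(n+2),\nu(n+4)\}$.

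For \ref{Item_AP_3} I would argue directly for the given $\alpha\in(-1,2-n)$, without invoking the convergence of the powers $\tilde u_\epsilon^{(\alpha_l+n+1)/2}$ from Corollary \ref{Cor_SS} (which is only stated for the fixed sequence $\alpha_l$). Since $\alpha+n+1<3$, the power maps $u\mapsto u^{(\alpha+n+1)/4}$ and $u\mapsto u^{(\alpha+n+1)/2}$ are continuous from $L^{p_l}([0,T]\times\TT)$ into $L^q$ for some $q>2$, resp. $q>1$, once $p_l$ is large; hence $\tilde u_\epsilon\to\tilde u$ in $\XL$ forces, by Vitali's convergence theorem, $w_\epsilon:=\tilde u_\epsilon^{(\alpha+n+1)/4}\to w:=\tilde u^{(\alpha+n+1)/4}$ in $L^2([0,T]\times\TT)$ and $v_\epsilon:=\tilde u_\epsilon^{(\alpha+n+1)/2}\to v:=\tilde u^{(\alpha+n+1)/2}$ in $L^1([0,T]\times\TT)$, almost surely. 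The functionals $F(\cdot)=\|\partial_x\cdot\|_{L^4([0,T]\times\TT)}^4$ on $\{\,\cdot\in L^4(0,T;W^{1,4}(\TT))\,\}$ and $G(\cdot)=\|\partial_x^2\cdot\|_{L^2([0,T]\times\TT)}^2$ on $\{\,\partial_x^2\cdot\in L^2([0,T]\times\TT)\,\}$, extended by $+\infty$, are convex and hence sequentially lower semicontinuous with respect to strong $L^2$, resp. $L^1$, convergence, so $F(w)\le\liminf_\epsilon F(w_\epsilon)$ and $G(v)\le\liminf_\epsilon G(v_\epsilon)$ almost surely; this already yields $\tilde u^{(\alpha+n+1)/4}\in L^4(0,T;W^{1,4}(\TT))$ and $\tilde u^{(\alpha+n+1)/2}\in L^2(0,T;H^2(\TT))$ almost surely (the lower-order norms being controlled by $\tilde u\in\XL$ and a Poincaré-type interpolation on $\TT$), i.e. \eqref{Eq241}. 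For $u_\epsilon$, the chain-rule computations already performed in the proof of Lemma \ref{lemma_int_p} (for $w_\epsilon$) and in \eqref{Eq215} (for $v_\epsilon$) give $F(u_\epsilon^{(\alpha+n+1)/4})\eqsim_{\alpha,n}\int_0^T\!\!\int_\TT u_\epsilon^{\alpha+n-3}(\partial_x u_\epsilon)^4$ and $G(u_\epsilon^{(\alpha+n+1)/2})\lesssim_{\alpha,n}\int_0^T\!\!\int_\TT u_\epsilon^{\alpha+n-1}(\partial_x^2 u_\epsilon)^2+u_\epsilon^{\alpha+n-3}(\partial_x u_\epsilon)^4$, both of which are bounded via \eqref{Eq13_New}; Fatou, equidistribution, $\epsilon^{\alpha+1}\to0$, dominated convergence on $A_j$ and summation over $j$ then give \eqref{Eq210}.

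The bulk of the proof is bookkeeping with the partition and the (conditional) expectations; the steps requiring the most care are the lower-semicontinuity transfers through the weak and Sobolev--Slobodeckij topologies — in particular establishing $H^{\kappa_l}(\TT)\hookrightarrow W^{-3,\mu}(\TT)$ so that $\Xc$-convergence descends to $C([0,T];W^{-3,\mu}(\TT))$, and verifying that $F$ and $G$ are lower semicontinuous for strong $L^2$, resp. $L^1$, convergence — and ensuring that, by passing $\epsilon\searrow0$ inside the $\mathfrak F_0$-conditional estimates before summing in $j$, one recovers the sharp $\|u_0\|_{\MM(\TT)}$-dependence; the cruder bounds \eqref{Eq240}--\eqref{Eq250} alone would not be summable over $j$.
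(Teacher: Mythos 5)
Your proposal is correct and follows the same overall strategy as the paper — transfer the uniform conditional estimates on $u_\epsilon$ to $\tilde u$ via equidistribution, pathwise lower semicontinuity and Fatou's lemma, keeping the sharp $\|u_0\|_{\MM(\TT)}$-dependence by passing $\epsilon\searrow 0$ inside the $\mathfrak F_0$-conditional bounds — but your implementations of \ref{Item_AP_2} and \ref{Item_AP_3} differ from the paper's. For \ref{Item_AP_1} the paper argues exactly as you do (without even localizing to $A_j$; your partition bookkeeping is a harmless, if anything more careful, variant). For \ref{Item_AP_2} the paper does not establish pathwise lower semicontinuity of the Sobolev--Slobodeckij norm: it uses \eqref{Eq75_New2} to bound $\tilde\chi_\epsilon^{(j)}\tilde u_\epsilon$ in the Bochner space $L^\nu(\tilde\Omega;W^{\gamma,\frac{2\nu}{2-\nu}}(0,T;W^{-3,\mu}(\TT)))$, extracts a weak limit there, identifies it with $\mathbbm 1_{\tilde A_j}\tilde u$ through Vitali's theorem in $L^p(\tilde\Omega\times[0,T]\times\TT)$, and invokes weak lower semicontinuity of the Bochner norm; your route instead descends the $\Xc$-convergence to $C([0,T];W^{-3,\mu}(\TT))$ via the embedding $H^{\kappa_l}(\TT)\hookrightarrow H^{-3}(\TT)\hookrightarrow W^{-3,\mu}(\TT)$ (valid since $\mu<2$) and applies Fatou to the Slobodeckij double integral $\omega$-by-$\omega$. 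Both work; yours avoids the weak-compactness/identification step, the paper's avoids having to check the embedding and the pathwise l.s.c. For \ref{Item_AP_3} the paper only says ``analogously'' (i.e.\ the same weak-compactness scheme applied to the powers), whereas you give a self-contained pathwise argument via Vitali and lower semicontinuity of $\|\partial_x\cdot\|_{L^4}^4$ and $\|\partial_x^2\cdot\|_{L^2}^2$, which has the additional merit of handling an arbitrary $\alpha\in(-1,2-n)$ without reference to the fixed sequence $(\alpha_l)$ from Corollary \ref{Cor_SS}. One point to tighten: convexity alone does not imply sequential lower semicontinuity under strong $L^1$/$L^2$ convergence; you should justify it in the standard way (along a subsequence realizing the finite $\liminf$, the derivatives $\partial_x w_\epsilon$, resp.\ $\partial_x^2 v_\epsilon$, are bounded in $L^4$, resp.\ $L^2$, hence converge weakly to a limit that is identified with $\partial_x w$, resp.\ $\partial_x^2 v$, in the sense of distributions, and the norm is weakly l.s.c.). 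With that fixed, the argument is complete.
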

\begin{proof}
	For \ref{Item_AP_1}, we only verify \eqref{Eq225}, because \eqref{Eq224} can be derived analogously. Since almost surely $\tilde{u}_\epsilon\to \tilde{u}$ in $\XS$, we conclude
	\begin{align}
		\tilde{\EE}\bigl[
		\|\partial_x \tilde{u}\|_{L^r([0,T]\times \TT)}^r\bigr] \,&\le\,
		\tilde{\EE}\Bigl[ \liminf_{\epsilon\searrow 0}
		\|\partial_x \tilde{u}_\epsilon\|_{L^r([0,T]\times \TT)}^r\Bigr]
		\\&
		\le\, \liminf_{\epsilon\searrow 0} \tilde{\EE}\bigl[
		\|\partial_x \tilde{u}_\epsilon\|_{L^r([0,T]\times \TT)}^r
		\bigr]
		\\&
		=\, \liminf_{\epsilon\searrow 0} {\EE}\bigl[
			\EE\bigl[ \|\partial_x u_\epsilon\|_{L^r([0,T]\times \TT)}^r\,\big|\, \mathfrak{F}_0
		\bigr]
		\bigr]
		\\&
		\lesssim_{n, r, \Lambda, T}
		\, \liminf_{\epsilon\searrow 0}
		\EE\bigl[
		(\|u_0\|_{\MM(\TT)}+\epsilon)^{4-r} \bigl(\|u_0\|_{\MM(\TT)}^{2r-n-4}+ \|u_0\|_{\MM(\TT)}^{2r-4}+\epsilon^{2r-n-4}\bigr)\bigr]
		\\&
		= \, \EE\bigl[\|u_0\|_{\MM(\TT)}^{r-n}+ \|u_0\|_{\MM(\TT)}^{r}\bigr],
	\end{align}
	using Fatou's lemma and Lemma \ref{Lemma_int_r}. 
	
	For \ref{Item_AP_2}, we use  Lemma \ref{Lemma_temporal_regularity_New} to conclude 
	\begin{align}\begin{split}\label{Eq247}
			&
	\tilde{\EE}\Bigl[
	\bigl\|\tilde{\chi_{\epsilon}}^{(j)}
	\tilde{u}_\epsilon
	\bigr\|^{\nu}_{W^{\gamma, \frac{2\nu}{2-\nu}}(0,T;W^{-3,\mu}(\TT))}
	\Bigr]
	\\&\quad =\,\EE\Bigl[\mathbbm{1}_{A_j}
	\EE\Bigl[
	\left\|
	u_\epsilon
	\right\|^{\nu}_{W^{\gamma, \frac{2\nu}{2-\nu}}(0,T;W^{-3,\mu}(\TT))}
	\,\Big|\, \mathfrak{F}_0 \Bigr]\Bigr]\\&\quad \lesssim_{\gamma, n, \mu,\nu, \Lambda, T}\, \EE\Bigl[\mathbbm{1}_{A_j}\Bigl(\bigl(
	\|u_0\|_{\MM(\TT)}+\epsilon
	\bigr)^{(n-1-\frac{n^2}{p_{\mu,\nu}})\nu}\,+\, 
	\bigl(
	\|u_0\|_{\MM(\TT)}+\epsilon
	\bigr)^{(n+1)\nu}\Bigr)\Bigr],
		\end{split}
\end{align}
yielding a uniform bound on $\tilde{\chi_{\epsilon}}^{(j)}\tilde{u}_\epsilon$ in $L^\nu(\tilde{\Omega},W^{\gamma, \frac{2\nu}{2-\nu}}(0,T;W^{-3,\mu}(\TT)) )$. Hence, up to taking another subsequence, we can assume that $\tilde{\chi_{\epsilon}}^{(j)}\tilde{u}_\epsilon$ admits a weak limit in $L^\nu(\Omega,W^{\gamma, \frac{2\nu}{2-\nu}}(0,T;W^{-3,\mu}(\TT)) )$. Since almost surely $\tilde{\chi_{\epsilon}}^{(j)}\to \mathbbm{1}_{\tilde{A}_j}$ in $\Xind$ and $\tilde{u}_\epsilon\to \tilde{u}$ in $\Xp$, we also have $\tilde{\chi_{\epsilon}}^{(j)}\tilde{u}_\epsilon\to \mathbbm{1}_{\tilde{A}_j}\tilde{u} $ in $L^p([0,T]\times \TT)$. Hence, using Vitali's convergence theorem and \eqref{Eq240}, we deduce that
\[
\EE\bigl[ \bigl\|\tilde{\chi_{\epsilon}}^{(j)}\tilde{u}_\epsilon -  \mathbbm{1}_{\tilde{A}_j}\tilde{u} \bigr\|_{L^p([0,T]\times \TT)}^p\bigr]\,\to \, 0,
\]
such that the weak limit of  $\tilde{\chi_{\epsilon}}^{(j)}\tilde{u}_\epsilon$ has to be $\mathbbm{1}_{\tilde{A}_j}\tilde{u}$. By lower semicontinuity of the norm with respect to weak convergence, Fatou's lemma and \eqref{Eq247}, we conclude that
\begin{align}&
	\tilde{\EE}\Bigl[\mathbbm{1}_{\tilde{A}_j}
	\|
	\tilde{u}
	\|^{\nu}_{W^{\gamma, \frac{2\nu}{2-\nu}}(0,T;W^{-3,\mu}(\TT))}
	\Bigr]
	\\&\quad 
	\le \,\liminf_{\epsilon\searrow 0} 
		\tilde{\EE}\Bigl[
	\bigl\|\tilde{\chi_{\epsilon}}^{(j)}
	\tilde{u}_\epsilon
	\bigr\|^{\nu}_{W^{\gamma, \frac{2\nu}{2-\nu}}(0,T;W^{-3,\mu}(\TT))}
	\Bigr]
	\\&\quad \lesssim_{\gamma, n, \mu,\nu, \Lambda, T}\, \liminf_{\epsilon\searrow 0} \EE\Bigl[\mathbbm{1}_{A_j}\Bigl(\bigl(
	\|u_0\|_{\MM(\TT)}+\epsilon
	\bigr)^{(n-1-\frac{n^2}{p_{\mu,\nu}})\nu}\,+\, 
	\bigl(
	\|u_0\|_{\MM(\TT)}+\epsilon
	\bigr)^{(n+1)\nu}\Bigr)\Bigr]
	\\&\quad =\, \EE\Bigl[\mathbbm{1}_{A_j}\Bigl(
	\|u_0\|_{\MM(\TT)}^{(n-1-\frac{n^2}{p_{\mu,\nu}})\nu}\,+\, 
	\|u_0\|_{\MM(\TT)}
	^{(n+1)\nu}\Bigr)\Bigr].
\end{align}
It remains to sum over $j\in \NN$ to obtain \eqref{Eq243}. Analogously,  \ref{Item_AP_3} follows from Proposition \ref{Thm_Ito_s_formula_New}.
\end{proof}

	\begin{proof}[Proof of Theorem \ref{thm_main}] 
		We first show that $(\tilde{\Omega},\tilde{\mathfrak{A}}, \tilde{\PP})$, $\tilde{\mathfrak{F}}$, $(\tilde{\beta}^{(k)})_{k\in \ZZ}$ and $\tilde{u}$ indeed constitute a martingale solution to \eqref{Eq_STFE_rewritten} in the sense of Definition \ref{Defi_sol}. 
		To this end, we observe that	almost surely $\tilde{u}_\epsilon(t)\ge 0$ for all $t\in [0,T]$ since it is equidistributed to $u_\epsilon$. Consequently, its limit $\tilde{u}$ in $\Xc$ satisfies almost surely $\tilde{u}(t)\ge 0$ as well and in particular $\tilde{u}(t)$ is a non-negative measure on $\TT$ for all $t\in [0,T]$ by \cite[Theorem 6.22]{LiebLoss2001}. Moreover, since $\tilde{u}(t)$ is $\tilde{\mathfrak{F}}_t$-$\mathfrak{B}(H^{-1}(\TT))$-measurable by the definition of $\tilde{\mathfrak{F}}$, the real valued random variables $\langle \tilde{u}(t), \vp\rangle$ for $\vp\in C^\infty(\TT)$ are $\tilde{\mathfrak{F}}_t$-measurable. By approximation, the same holds when $\vp\in C(\TT)$, such that $\tilde{u}$ is $\tilde{\mathfrak{F}}$-adapted with values in $(\mathcal{M}(\TT), \Z)$. Moreover, by the divergence form of \eqref{Eq_1}, the total variation norm $\|\tilde{u}(t)\|_{\MM(\TT)}=  \langle\tilde{u}(t), \mathbbm{1}_{\TT}\rangle$ is constant in time. Using this, we show vague continuity of $\tilde{u}$ in $\MM(\TT)$ and assume for contradiction that $\tilde{u}$ is not continuous at some time $t$. Then, there is a vaguely open neighborhood $\mathcal{O}\subset \MM(\TT)$ of $\tilde{u}(t)$ and a sequence $t_n\to t$ with $\tilde{u}(t_n)\notin \mathcal{O}$ for all $n\in \NN$. However, since $\tilde{u}$ is bounded in $\MM(\TT)$, a subsequence of $\tilde{u}(t_n)$ converges vaguely to some $\nu\in \MM(\TT)$ by the Banach-Alaoglu theorem and since $\tilde{u}\in \Xc$ it must hold $\nu = \tilde{u}(t)$, contradicting $\tilde{u}(t_n)\notin \mathcal{O}$.	As demonstrated in Remark \ref{Rem_Intbility}, the integrability conditions \eqref{Eq255} follow from $\tilde{u}$ lying in $\XL$ and $\XS$. Lemma \ref{Lemma_BM} states that $(\tilde{\beta}^{(k)})_{k\in \ZZ}$ is  a family of independent $\tilde{\mathfrak{F}}$-Brownian motions. Since Lemma \ref{Lemma_new_partition} and Lemma \ref{Lemma_SPDE} imply that $\tilde{u}$ satisfies \eqref{Eq_1}, we showed that the quadruple $(\tilde{\Omega},\tilde{\mathfrak{A}}, \tilde{\PP})$, $\tilde{\mathfrak{F}}$, $(\tilde{\beta}^{(k)})_{k\in \ZZ}$, $\tilde{u}$ suffices Definition \ref{Defi_sol}.
		
		To verify that $\tilde{u}(0)\sim u_0$, we observe that the sets of the form
		\[
		\bigl\{
		\nu \in \MM(\TT)\,\big|\,(
		\langle \nu, \vp_1 \rangle,\dots, 
		\langle \nu, \vp_l \rangle)\,\in\, A
		\bigr\}
		\]
		for $\vp_1, \dots, \vp_l\in C^\infty(\TT)$ and Borel sets $A\subset \RR^l$ form an intersection stable generator of $\Z$ by density of $C^\infty(\TT)$ in $C(\TT)$. Since $\tilde{u}_\epsilon (0)\sim u_{0,\epsilon}$ as $H^{-1}(\TT)$-valued random variables, almost surely $u_{0,\epsilon}\to u_0$ vaguely and $\tilde{u}_\epsilon (0)\to \tilde{u}(0)$ in $H^{-1}(\TT)$ as $\epsilon \searrow 0$, it holds 
		\[
		(
		\langle u_0, \vp_1 \rangle,\dots, 
		\langle u_0, \vp_l \rangle)\,\sim\, 
		(
		\langle \tilde{u}(0), \vp_1 \rangle,\dots, 
		\langle \tilde{u}(0), \vp_l \rangle),
		\]
		yielding that the laws of $u_0$ and $\tilde{u}(0)$ on $(\MM(\TT), \Z)$ coincide. Due to Corollary \ref{Cor_SS}, we have $\tilde{u}\in \Xc\cap \XL\cap \XS$.
		Together with Lemma \ref{Lemma_APRIORI} \ref{Item_AP_1}, \ref{Item_AP_2} this leads to
		\ref{Main_Item2} and \ref{Main_Item6}.
		 The claim in \ref{Main_Item_Mass} was already checked at the beginning of this proof. Part \ref{Main_Item5} is the content of Lemma \ref{Lemma_APRIORI} \ref{Item_AP_3}.
	\end{proof}

	\appendix
	
	\section{Projective limits of locally convex vector spaces}\label{Appendix_lcs}
	We give a short summary of useful facts on topological vector spaces following \cite[Section II.4, Section II.5]{Schaefer_Wolff_99}. In the following, we consider vector spaces over $\mathbbm{R}$ equipped with a topology. Such a tuple is a topological vector space, if addition and scalar multiplication are  continuous mappings. A Hausdorff topological vector space $\X$ is called locally convex, if every neighborhood of a point $x\in \X$ contains a convex neighborhood of $x$.
	Since balls are convex, every normed vector space is locally convex. Moreover, if $\X$ is a normed vector space, and we denote its topological dual by $\X^*$, the weak topology on $\X$ admits the collection of sets
	\begin{equation}\label{Eq74}
		\bigl\{ y\in \X \,\big|\,| \langle y-x, x_i^*\rangle|<\delta,\;i\in \{1,\dots , j\} \bigr\}
	\end{equation}
	for
	$x_i^*\in \X^*$, $\delta>0$
	as a neighborhood basis at $x\in \X$. Since the set \eqref{Eq74} is convex, $\X$ with its weak topology is locally convex as well. Now, let $\X_l$ for $l\in \NN$ be a Hausdorff, locally convex space, such that $\X_{l+1}\hookrightarrow \X_{l}$ continuously. Then the projective limit of $(\X_l)_{l\in \NN}$ is the space $\X=\bigcap_{l\in \NN}\X_l $ equipped with the coarsest topology, such that each of the embeddings $\X\hookrightarrow \X_l$ is continuous, and is itself a Hausdorff, locally convex topological vector space again.
	\begin{lemma}\label{Lemma_compactness_in_lcs}
		Let $K$ be  a subset of the projective limit $\X$ of a sequence of Hausdorff, locally convex spaces $(\X_l)_{l\in \NN}$. Then $K$ is compact in the topology of $\X$, iff $K=\bigcap_{l\in \NN}K_l$, where $K_l$ is a compact subset of $\X_l$.
	\end{lemma}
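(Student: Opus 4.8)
The plan is to establish the two directions of the equivalence separately. The forward direction is essentially formal, while the reverse direction is where the work lies, and I would deduce it from Tychonoff's theorem after recognising the projective limit topology as a subspace topology on a product.

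First I would treat the necessity: if $K$ is compact in $\X$, set $K_l := \iota_l(K)$, where $\iota_l \colon \X \hookrightarrow \X_l$ is the inclusion. By the definition of the projective limit topology each $\iota_l$ is continuous, so $K_l$ is compact in $\X_l$; and since $\iota_l$ leaves points unchanged, $K_l$ agrees with $K$ as a set for every $l$, whence $\bigcap_{l} K_l = K$. This already settles one implication.

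For the converse, assume $K = \bigcap_l K_l$ with each $K_l$ a compact subset of $\X_l$. The main device is the diagonal map $\Delta \colon \X \to \prod_{l \in \NN} \X_l$, $x \mapsto (x)_{l}$. The first step is to verify that $\Delta$ is a topological embedding: it is injective and continuous (its $l$-th coordinate is $\iota_l$), and it is open onto its image, since a subbasic open set of the form $\iota_l^{-1}(U)$ in the projective limit has image $\Delta(\X) \cap \pi_l^{-1}(U)$, which is relatively open; this is precisely the general fact that an initial topology with respect to a point-separating family of maps is the topology induced from the canonical embedding into the product. Hence $K$ is compact in $\X$ if and only if $\Delta(K)$ is compact in $\prod_l \X_l$. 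The second step is the set identity $\Delta(K) = \Delta(\X) \cap \prod_l K_l$, which follows immediately from $K = \bigcap_l K_l$. The third step is to observe that $\Delta(\X)$ is closed in $\prod_l \X_l$: it equals the intersection over $l$ of the coherence sets $\{(x_m)_m : \iota_{1,l}(x_l) = x_1\}$, where $\iota_{1,l} \colon \X_l \hookrightarrow \X_1$ is the composed inclusion, and each such set is the preimage of the diagonal of $\X_1$ under a continuous map, hence closed because $\X_1$ is Hausdorff. Finally, $\prod_l K_l$ is compact by Tychonoff's theorem, so the closed subset $\Delta(K)$ of it is compact, and therefore so is $K$.

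I expect the only slightly delicate point to be the embedding claim, namely that the coarsest-topology description of $\X$ really coincides with the subspace topology inherited from $\prod_l \X_l$; once this is granted, the rest is a routine combination of Tychonoff's theorem, the Hausdorff property of the factors (used only for the closedness of $\Delta(\X)$), and the elementary fact that a closed subset of a compact space is compact. As a small bookkeeping point, I would note that the composed inclusions $\X_{l'} \hookrightarrow \X_l$ for $l' \ge l$ are continuous, which is what makes the coherence sets above meaningful.
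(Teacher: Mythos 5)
Your proposal is correct and follows essentially the same route as the paper: the forward direction via continuity of the embeddings with $K_l=K$, and the converse by identifying $\X$ with the diagonal subset of $\prod_l\X_l$, showing that subset is closed using the Hausdorff property of the factors, and invoking Tychonoff's theorem. The only cosmetic difference is that you prove closedness of the diagonal via preimages of the diagonal of $\X_1$, while the paper uses a direct separation-of-points argument; these are equivalent.
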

	\begin{proof}
		If $K$ is compact with the topology of $\X$, it is also compact with the topology of $\X_l$ since the embedding $\X\hookrightarrow \X_l$ is continuous. The claim follows since $K= \bigcap_{l\in \NN} K$, trivially. For the converse implication, we note that $\X$ is homeomorphic to the subset 
		\[D\,=\, 
		\biggl\{  (x_l)_{l\in \NN} \in \prod_{l\in \NN}\X_l\, \bigg| \,\forall i,j: \,x_i=x_j\biggr\}
		\]
		of the topological product space $\prod_{l\in \NN}\X_l $ by \cite[p.52]{Schaefer_Wolff_99}. Denoting the homeomorphism by $f\colon D\,\to \, \X$, we notice that $K= f(D\cap \prod_{l\in \NN} K_l)$. Because $\prod_{l\in \NN} K_l$ is compact by Tychonoff's theorem, it suffices to show that $D\subset \prod_{l\in \NN}\X_l $ is closed, since then $K$ is compact as it is the image of a compact set under a continuous mapping. To do so, let $(x_l)_{l\in \NN}\in \prod_{l\in \NN}\X_l  $ not in $D$, i.e. we assume that there are indices $i<j$ with $x_i\ne x_j$. Since $\X_j\subset \X_i$ and $\X_i$ is Hausdorff, there exist disjoint open neighborhoods $B_i,B_j\subset \X_i$ of $x_i$ and $x_j$, respectively. By the continuity of the embedding $\X_j\hookrightarrow \X_i$, $B_j$ is also open in $\X_j$. Hence, denoting by $p_i$, $p_j$ the continuous projection from $\prod_{l\in \NN}\X_l$ onto the $i$-th and $j$-th component, we have constructed the open neighborhood $p_i^{-1}(B_i) \cap p_j^{-1}(B_j)$ of $(x_l)_{l\in \NN}$ which is disjoint from $D$. Hence, $D$ is closed and the proof is finished.
	\end{proof}
	
	\section{Tightness criterions}
	Let $(\Omega, \mathfrak{A},\PP)$ be a probability space. We recall that a family  $(Y_i)_{i\in \mathcal{I}}$ of mappings defined on $\Omega$ with values in a  topological  space $\X$ is called tight, if for every $\delta$ there is a compact set $K_\delta\subset \mathcal{X}$ such that
	\[
	\PP(\{ Y_i\notin K_\delta\})\,<\, \delta
	\]
	for all $i\in \mathcal{I}$. For this definition to make sense, we only require $\{ Y_i\notin K_\delta\}\in \mathfrak{A}$ for the compact sets $K_\delta$, which is in line with the setting in \cite{jakub98}. This is the case when $(Y_i)_{i\in \mathcal{I}}$ is a family of random variables and $\X$ is Hausdorff, since then the compact sets $K_\delta$ are closed and in particular Borel measurable.
	\begin{lemma}\label{Tightness_Proj_Limit}Let $\X$ be the projective limit of a sequence of Hausdorff, locally convex spaces $(\X_l)_{l\in \NN}$ and $Y_i\colon \Omega \to \X$ for $i\in \mathcal{I}$ a random variable in each of the spaces $\X_l$. The family $(Y_i)_{i\in \mathcal{I}}$ is tight on $\X$ iff it is tight on $\X_l$ for each $l\in \NN$.
	\end{lemma}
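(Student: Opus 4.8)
The plan is to exploit the characterisation of compact subsets of a projective limit established in Lemma \ref{Lemma_compactness_in_lcs}, namely that $K\subset\X$ is compact if and only if $K=\bigcap_{l\in\NN}K_l$ with each $K_l$ compact in $\X_l$. One implication is almost immediate: suppose $(Y_i)_{i\in\mathcal I}$ is tight on $\X$. Given $\delta>0$, pick a compact $K_\delta\subset\X$ with $\PP(\{Y_i\notin K_\delta\})<\delta$ for all $i$. Since the embedding $\X\hookrightarrow\X_l$ is continuous, $K_\delta$ is also compact in $\X_l$, and the event $\{Y_i\notin K_\delta\}$ is the same set regardless of which ambient space we view it in. Hence the same $K_\delta$ witnesses tightness on each $\X_l$, and we are done with this direction.

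For the converse, assume $(Y_i)_{i\in\mathcal I}$ is tight on each $\X_l$. Fix $\delta>0$. For each $l\in\NN$, choose a compact set $K^{(l)}_\delta\subset\X_l$ such that $\PP(\{Y_i\notin K^{(l)}_\delta\})<\delta\,2^{-l}$ for all $i\in\mathcal I$. Define $K_\delta=\bigcap_{l\in\NN}K^{(l)}_\delta$, viewed as a subset of $\X=\bigcap_{l\in\NN}\X_l$. By Lemma \ref{Lemma_compactness_in_lcs}, $K_\delta$ is compact in $\X$ (one should make sure the sets $K^{(l)}_\delta$ are treated as subsets of $\X_l$, which is legitimate since $\X\subset\X_l$ and the intersection over all $l$ lands inside $\X$). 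Then
\[
\{Y_i\notin K_\delta\}\,=\,\bigcup_{l\in\NN}\{Y_i\notin K^{(l)}_\delta\},
\]
so by countable subadditivity $\PP(\{Y_i\notin K_\delta\})\le\sum_{l\in\NN}\PP(\{Y_i\notin K^{(l)}_\delta\})<\sum_{l\in\NN}\delta\,2^{-l}=\delta$, uniformly in $i$. This establishes tightness on $\X$.

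A couple of measurability remarks are needed to make the statement well-posed, in line with the discussion preceding the lemma: each $Y_i$ is assumed to be a random variable into every $\X_l$, and the $K^{(l)}_\delta$ are compact hence closed (the $\X_l$ being Hausdorff), so the events $\{Y_i\notin K^{(l)}_\delta\}$ lie in $\mathfrak A$; consequently so does their countable union $\{Y_i\notin K_\delta\}$, which is what the definition of tightness on $\X$ requires. The only mildly delicate point is the application of Lemma \ref{Lemma_compactness_in_lcs} in the converse direction — one must check that $\bigcap_l K^{(l)}_\delta$ is of the exact form prescribed there; but since each $K^{(l)}_\delta$ is already a compact subset of $\X_l$, this is immediate and there is no real obstacle. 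The argument is entirely soft and relies on no properties of the STFE.
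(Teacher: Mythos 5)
Your proof is correct and follows essentially the same route as the paper: the forward direction via continuity of the embeddings $\X\hookrightarrow\X_l$, and the converse by choosing compacts $K^{(l)}_\delta$ with exceptional probability $\delta 2^{-l}$, intersecting them, invoking Lemma \ref{Lemma_compactness_in_lcs} for compactness of the intersection in $\X$, and concluding with countable subadditivity. Your added remarks on measurability are consistent with the conventions set out before the lemma; nothing further is needed.
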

	
	\begin{proof}If $(Y_i)_{i\in \mathcal{I}}$ is tight on $\X$ it is also tight on $\X_l$ by continuity of the embedding $\X \hookrightarrow \X_l$. Conversely, if $(Y_i)_{i\in \mathcal{I}}$ is tight on $\X_l$ for each $l\in \NN$, we can for  $\delta>0$ choose  compact subsets $K_{\delta,l}\subset \X_l$ such that
		\[
		\PP(\{ Y_i\notin K_{\delta,l}\})\,<\, \tfrac{\delta}{2^l}
		\]
		for all $i\in \mathcal{I}$.
		The set
		$K_\delta=\bigcap_{l\in \NN}K_{\delta,l}$ 
		is compact with the topology of $\X$
		by Lemma \ref{Lemma_compactness_in_lcs}
		and since 
		\begin{equation}\label{Eq76}
			\PP(\{
			Y_i\notin K_\delta
			\})\,\le \, \sum_{l\in \NN} \PP(\{ Y_i\notin K_{\delta, l}\})\,<\, \delta,
		\end{equation}
		the family
		$(Y_i)_{i\in \mathcal{I}}$ is tight on $\X$.
	\end{proof}
	Using the same argument, one can also reduce tightness in a countable product of topological spaces to tightness in each of the separate spaces.
	\begin{lemma}\label{Lemma_prod_tightness}Let $\X^{(l)}$ be a topological space and $(Y_i^{(l)})_{i\in \mathcal{I}}$ be a family of $\X^{(l)}$-valued mappings defined on $\Omega$ for each $l\in \NN$. If $(Y_i^{(l)})_{i\in \mathcal{I}}$ is tight on $\X^{(l)}$ for each $l\in \NN$, then also the family $((Y_i^{(l)})_{l\in \NN})_{i\in \mathcal{I}}$ lies tight on the topological product $\prod_{l\in \NN} \X^{(l)}$.
	\end{lemma}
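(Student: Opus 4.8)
The plan is to mimic the proof of Lemma \ref{Tightness_Proj_Limit} almost verbatim, replacing the projective limit by the topological product and Lemma \ref{Lemma_compactness_in_lcs} by Tychonoff's theorem directly. First I would fix $\delta>0$ and, using tightness of $(Y_i^{(l)})_{i\in\mathcal{I}}$ on $\X^{(l)}$ for each $l\in\NN$, choose a compact set $K_\delta^{(l)}\subset \X^{(l)}$ with
\[
\PP\bigl(\{ Y_i^{(l)}\notin K_\delta^{(l)} \}\bigr)\,<\,\tfrac{\delta}{2^l}
\]
for all $i\in\mathcal{I}$. Then I would set $K_\delta=\prod_{l\in\NN}K_\delta^{(l)}$, which is compact in the product topology by Tychonoff's theorem.

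Next I would observe that the complement of $K_\delta$ in $\prod_{l\in\NN}\X^{(l)}$ is $\bigcup_{l\in\NN}p_l^{-1}(\X^{(l)}\setminus K_\delta^{(l)})$, where $p_l$ denotes the projection onto the $l$-th factor; consequently
\[
\bigl\{ (Y_i^{(l)})_{l\in\NN}\notin K_\delta \bigr\}\,=\,\bigcup_{l\in\NN}\bigl\{ Y_i^{(l)}\notin K_\delta^{(l)} \bigr\}.
\]
Since each $\{Y_i^{(l)}\notin K_\delta^{(l)}\}$ is assumed measurable (this is exactly the hypothesis needed for tightness of $(Y_i^{(l)})_{i\in\mathcal{I}}$ to make sense), the right-hand side is a countable union of measurable sets, hence measurable, so the definition of tightness applies to $K_\delta$. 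A union bound then gives
\[
\PP\bigl(\{ (Y_i^{(l)})_{l\in\NN}\notin K_\delta \}\bigr)\,\le\,\sum_{l\in\NN}\PP\bigl(\{ Y_i^{(l)}\notin K_\delta^{(l)} \}\bigr)\,<\,\delta
\]
uniformly in $i\in\mathcal{I}$, which is precisely the assertion. I do not expect any genuine obstacle here; the only point requiring a word of care is the measurability of $\{(Y_i^{(l)})_{l\in\NN}\notin K_\delta\}$, which is handled by the identity above together with the assumption that each $\{Y_i^{(l)}\notin K_\delta^{(l)}\}$ lies in $\mathfrak{A}$, exactly as in the discussion preceding Lemma \ref{Tightness_Proj_Limit}.
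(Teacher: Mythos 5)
Your proposal is correct and follows essentially the same route as the paper: choose compact sets $K_\delta^{(l)}$ with exceptional probability less than $\delta/2^l$, form the product, invoke Tychonoff's theorem, and conclude by a union bound. The only addition is your explicit remark on measurability of the exceptional set, which the paper leaves implicit but which is consistent with its conventions on tightness.
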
 
	\begin{proof}For $\delta>0$, $l\in \NN$ there are compact sets $K_{\delta,l}\subset \X^{(l)}$ such that
		\[
		\PP(\{Y_i^{(l)}\notin K_{\delta,l}\})\,<\,\tfrac{ \delta}{2^l}
		\]
		for all $i\in \mathcal{I}$. The set $K_\delta= \prod_{l\in \NN}K_{\delta,l}$ is compact by Tychonoff's theorem and 
		\[
		\PP(\{
		(Y_i^{(l)})_{l\in \NN}\notin K_\delta
		\})\,\le \, \sum_{l\in \NN} \PP(\{ Y_i^{(l)}\notin K_{\delta, l}\})\,<\, \delta
		\]
		yields the claim.
	\end{proof}
	Lastly, we also show that it suffices to show tightness locally on $\Omega$.
	\begin{lemma}\label{Lemma_localized_tightness}Let $\X$ be a Hausdorff topological vector space, $(Y_i)_{i\in \mathcal{I}}$ a family of $\X$-valued random variables and $(A_j)_{j\in \NN}$  a measurable partition of $\Omega$. If $(\mathbbm{1}_{A_j}Y_i)_{i\in \mathcal{I}}$ lies tight on $\mathcal{X}$ for every $j\in \NN$, then $(Y_i)_{i\in \mathcal{I}}$ lies also tight on $\mathcal{X}$.
	\end{lemma}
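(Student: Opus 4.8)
Let $\X$ be a Hausdorff topological vector space, $(Y_i)_{i\in\mathcal I}$ a family of $\X$-valued random variables, and $(A_j)_{j\in\NN}$ a measurable partition of $\Omega$. If $(\mathbbm1_{A_j}Y_i)_{i\in\mathcal I}$ is tight on $\X$ for every $j\in\NN$, then $(Y_i)_{i\in\mathcal I}$ is tight on $\X$.

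**Plan.** The idea is essentially a union bound over the pieces $A_j$, as in the proofs of Lemma \ref{Tightness_Proj_Limit} and Lemma \ref{Lemma_prod_tightness}, but now the overlap with the partition is on the domain $\Omega$ rather than the target, so one has to be careful to patch together the compact sets coming from each $A_j$ in a way that also controls what happens on the (small) leftover event, and to account for the point $0\in\X$ since $\mathbbm1_{A_j}Y_i$ takes the value $0$ on $A_j^c$. Concretely, I would proceed as follows.

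First, fix $\delta>0$. For each $j\in\NN$, tightness of $(\mathbbm1_{A_j}Y_i)_{i\in\mathcal I}$ gives a compact set $K_{\delta,j}\subset\X$ with $\PP(\{\mathbbm1_{A_j}Y_i\notin K_{\delta,j}\})<\delta\,2^{-j}$ for all $i$. Since $\X$ is Hausdorff, singletons are closed, so $K_{\delta,j}':=K_{\delta,j}\cup\{0\}$ is again compact; replacing $K_{\delta,j}$ by $K_{\delta,j}'$ we may assume $0\in K_{\delta,j}$ for every $j$. Now set
\[
K_\delta\,=\,\bigcup_{j\in\NN}K_{\delta,j},
\]
but this union need not be compact, so instead I would truncate: choose $N=N(\delta)\in\NN$ so large that $\PP(\bigcup_{j>N}A_j)<\delta/2$ — possible because $(A_j)_j$ is a partition, hence $\PP(\bigcup_{j>N}A_j)\to 0$ — and set $K_\delta:=\bigcup_{j\le N}K_{\delta,j}$, a finite union of compact sets, hence compact. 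On the event $A_j$ with $j\le N$ we have $\mathbbm1_{A_j}Y_i=Y_i$, so $\{Y_i\notin K_\delta\}\cap A_j\subset\{\mathbbm1_{A_j}Y_i\notin K_{\delta,j}\}$; on $\bigcup_{j>N}A_j$ we simply bound the probability by $\delta/2$. Therefore
\[
\PP(\{Y_i\notin K_\delta\})\,\le\,\sum_{j\le N}\PP(\{\mathbbm1_{A_j}Y_i\notin K_{\delta,j}\})\,+\,\PP\Bigl(\bigcup_{j>N}A_j\Bigr)\,<\,\sum_{j\le N}\frac{\delta}{2^{j+1}}\,+\,\frac{\delta}{2}\,\le\,\delta
\]
uniformly in $i$, which is exactly tightness of $(Y_i)_{i\in\mathcal I}$ on $\X$. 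One measurability remark: $\{Y_i\notin K_\delta\}\in\mathfrak A$ because $\X$ is Hausdorff and $K_\delta$, being compact, is closed, hence Borel, and $Y_i$ is a random variable; likewise $\{\mathbbm1_{A_j}Y_i\notin K_{\delta,j}\}\in\mathfrak A$ by hypothesis, so the set manipulations above are legitimate.

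**Main obstacle.** There is no deep difficulty here; the only subtlety — and the reason the statement is not a one-line consequence of the earlier lemmas — is that a countable union of compact sets need not be compact, so one cannot naively take $\bigcup_j K_{\delta,j}$ the way one takes $\prod_j K_{\delta,j}$ in Lemma \ref{Lemma_prod_tightness} or $\bigcap_j K_{\delta,j}$ in Lemma \ref{Tightness_Proj_Limit}. The fix is the truncation at level $N(\delta)$, which is available precisely because $(A_j)_j$ partitions $\Omega$ so that the tail $\bigcup_{j>N}A_j$ has probability tending to $0$; the leftover mass on that tail is then absorbed into the $\delta$-budget. The adjustment $K_{\delta,j}\mapsto K_{\delta,j}\cup\{0\}$ is a harmless bookkeeping step ensuring the compact set accommodates the value that $\mathbbm1_{A_j}Y_i$ takes off $A_j$, though in fact it is not even needed once one only uses the inclusion $\{Y_i\notin K_\delta\}\cap A_j\subset\{\mathbbm1_{A_j}Y_i\notin K_{\delta,j}\}$ for $j\le N$.
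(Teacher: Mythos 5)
Your proof is correct and follows essentially the same route as the paper's: truncate the partition at a finite index $N$ so that the tail $\bigcup_{j>N}A_j$ has probability less than $\delta/2$, take the finite (hence compact) union of the compact sets associated with $j\le N$, and conclude by a union bound. The only cosmetic issue is the mismatch between the bound $\delta\,2^{-j}$ you state when choosing $K_{\delta,j}$ and the $\delta/2^{j+1}$ used in the final display (as written the total is $<3\delta/2$ rather than $<\delta$), which is harmless after trivial rescaling.
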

	\begin{proof}
		For a given $\delta>0$ we choose $J_0\in \NN$ such that
		\[
		\sum_{j=J_0+1}^{\infty} \PP\left(
		A_j\right)\,<\, \tfrac{\delta}{2}.
		\]
		Since  $(\mathbbm{1}_{A_j}Y_i)_{i\in \mathcal{I}}$ lies tight on $\mathcal{X}$ for every $j\in \{
		1,\dots, J_0
		\}$, there exist compact sets $K_\delta^{(j)}\subset \mathcal{X}$ such that
		\[
		\PP\bigl(\bigl\{
		\mathbbm{1}_{A_j}Y_i\notin 
		K_\delta^{(j)}
		\bigr\}
		\bigr)\,<\, \tfrac{\delta}{2J_0}
		\]
		for all $i\in \mathcal{I}$.
		Then, defining the compact set
		\[
		K_\delta\,=\, \bigcup_{j=1}^{J_0} K_\delta^{(j)}\cup \{0\},
		\]
		we can calculate that
		\begin{align}
			\PP \left(
			\left\{
			Y_i\notin K_\delta
			\right\}
			\right)\,&= \,
			\PP \biggl(
			\bigcup_{j\in \NN}\bigl\{
			\mathbbm{1}_{A_j}Y_i \notin K_\delta
			\bigr\}
			\biggr)\,\le \, \sum_{j\in \NN}
			\PP \bigl(
			\bigl\{
			\mathbbm{1}_{A_j} Y_i\notin K_\delta
			\bigr\}
			\bigr)\\&\le \,
			\sum_{j=1}^{J_0}
			\PP \bigl(
			\bigl\{
			\mathbbm{1}_{A_j} Y_i\notin K_\delta^{(j)}
			\bigr\}
			\bigr)\,+\, 
			\sum_{j=J_0+1}^{\infty}
			\PP (
			A_j
			)\,<\, \delta
		\end{align}
		for every $i\in \mathcal{I}$.
	\end{proof}
	
	\section*{Acknowledgements}
	The authors thank G\"unther Gr\"un for communicating the proper approximation of the $\alpha$-entropy functional used in the proof of Proposition \ref{Thm_Ito_s_formula_New}.
	
	This work was partially funded by the Deutsche Forschungsgemeinschaft (DFG, German Research Foundation) via – SFB 1283/2 2021 – 317210226, and co-funded by the European Union (ERC, FluCo, grant agreement No. 101088488). Views and opinions expressed are however those of the authors only and do not necessarily reflect those of the European Union or of the European Research Council. Neither the European Union nor the granting authority can be held responsible for them. 
	
This publication is partially supported by the project \emph{Codimension two free boundary problems} (with project number VI.Vidi.223.019 of the research program ENW Vidi) which is financed by the Dutch Research Council (NWO).

\end{document}